\begin{document}
\newtheorem{Pro}{Proposition}[section]
\newtheorem{Lem}[Pro]{Lemma}
\newtheorem{Sub}[Pro]{Sublemma}
\newtheorem{Thm}[Pro]{Theorem}
\newtheorem{MThm}{Theorem}
\renewcommand{\theMThm}{\Alph{MThm}}
\newtheorem{Rem}[Pro]{Remark}
\newtheorem{Def}[Pro]{Definition}
\newtheorem{Not}{Notation}
\newtheorem{Cor}[Pro]{Corollary}
\newtheorem{Ithm}{Theorem}
\renewcommand{\theIthm}{\arabic{chapter}.\arabic{Ithm}}
\newtheorem{IndThm}{Theorem}
\newtheorem{Idef}{Definition}
\newcommand{\SL}{\mathcal L^{1,p}(\Om)}
\newcommand{\Lp}{L^p(\Omega)}
\newcommand{\CO}{C^\infty_0(\Omega)}
\newcommand{\Rn}{\mathbb R^n}
\newcommand{\Rm}{\mathbb R^m}
\newcommand{\R}{\mathbb R}
\newcommand{\Om}{\Omega}
\newcommand{\Hn}{\mathbb H^n}
\newcommand{\HH}{\mathbb H^1}
\newcommand{\eps}{\epsilon}
\newcommand{\BVX}{BV_H(\Omega)}
\newcommand{\IO}{\int_\Omega}
\newcommand{\bG}{\boldsymbol{G}}
\newcommand{\bg}{\mathfrak g}
\newcommand{\p}{\partial}
\newcommand{\Xnu}{\overset{\rightarrow}{ X_\nu}}
\newcommand{\nuX}{\boldsymbol{\nu}^H}
\newcommand{\Up}{\boldsymbol{\mathcal Y}_H}
\newcommand{\n}{\boldsymbol \nu}
\newcommand{\sigmau}{\boldsymbol{\sigma}^u_H}
\newcommand{\di}{\nabla^{H,\mS}_i}
\newcommand{\duno}{\nabla^{H,\mS}_1}
\newcommand{\ddue}{\nabla^{H,\mS}_2}
\newcommand{\del}{\nabla^{H,S}}
\newcommand{\nui}{\nu_{H,i}}
\newcommand{\nuj}{\nu_{H,j}}
\newcommand{\dej}{\delta_{H,j}}
\newcommand{\cx}{\boldsymbol{c}^\mathcal S}
\newcommand{\sx}{\sigma_H}
\newcommand{\lx}{\mathcal L_H}
\newcommand{\pb}{\overline p}
\newcommand{\qb}{\overline q}
\newcommand{\ob}{\overline \omega}
\newcommand{\nuu}{\boldsymbol \nu_{H,u}}
\newcommand{\nuv}{\boldsymbol \nu_{H,v}}
\newcommand{\Bl}{\Bigl|_{\lambda = 0}}
\newcommand{\mS}{\mathcal S}
\newcommand{\delh}{\Delta^H}
\newcommand{\delinf}{\Delta_{H,\infty}}
\newcommand{\nabh}{\nabla^H}
\newcommand{\delp}{\Delta_{H,p}}
\newcommand{\mO}{\mathcal O}
\newcommand{\delhs}{\Delta_{H,S}}
\newcommand{\lhs}{\hat{\Delta}_{H,S}}
\newcommand{\bN}{\boldsymbol{N}}
\newcommand{\bnu}{\boldsymbol \nu}
\newcommand{\la}{\lambda}
\newcommand{\nup}{({\boldsymbol{\nu}^H})^\perp}
\newcommand{\oX}{\overline X}
\newcommand{\oY}{\overline Y}
\newcommand{\ou}{\overline u}
\newcommand{\dels}{\nabla^{H,\mS}}
\newcommand{\delXY}{\nabla^{H,\mS}_X Y}
\newcommand{\delYX}{\nabla^{H,\mS}_Y X}
\newcommand{\hp}{\tilde{h}_0'(s)}
\newcommand{\uc}{u_{,ij}}
\newcommand{\sij}{\sum_{i,j=1}^m}
\newcommand{\sul}{\Delta^H}
\newcommand{\nor}{\boldsymbol \nu}
\newcommand{\fv}{\mathcal V^{H}_I(S;\mathcal X)}
\newcommand{\sv}{\mathcal V^{H}_{II}(S;\mathcal X)}

\newcommand{\Id}{I_\delta}
\newcommand{\Ri}{\mathbb{R} \times I}
\newcommand{\Rd}{\mathbb{R} \times \Id}

\newcommand{\til}{\tilde{\boldsymbol{\nu}}^H}
\newcommand{\tilp}{(\tilde{\boldsymbol{\nu}}^H)^\perp}
\newcommand{\tilL}{\tilde{\mathcal{L}}_z(r)}
\newcommand{\tilLg}{\tilde{\mathcal{L}}_{\gamma(s)}(r)}
\newcommand{\tilLp}{\tilde{\mathcal{L}}_z'(r)}
\newcommand{\tilLzero}{\tilde{\mathcal{L}}_z(0)}
\newcommand{\ra}{\rightarrow}
\renewcommand{\to}{\rightarrow}
\newcommand{\ip}{\langle \cdot,\cdot\rangle}
\newcommand{\minus}{\setminus}
\newcommand{\Fx}{F_{\mathcal X}}

\newcommand{\den}{(1+( W^\phi  \phi)^2)^\frac{3}{2}}

\title[Stable complete embedded minimal surfaces in $\HH$, etc.]{Stable complete embedded minimal surfaces in $\HH$ with empty characteristic locus are vertical planes}
\author{D. Danielli}
\address{Department of Mathematics\\Purdue University \\
West Lafayette, IN 47907} \email[Donatella
Danielli]{danielli@math.purdue.edu}
\thanks{First author supported in part by NSF CAREER Grant, DMS-0239771}

\author{N. Garofalo}
\address{Department of Mathematics\\Purdue University \\
West Lafayette, IN 47907} \email[Nicola
Garofalo]{garofalo@math.purdue.edu}
\thanks{Second author supported in part by NSF Grant DMS-0701001}

\author{D. M. Nhieu}
\address{Department of Mathematics\\San Diego Christian College \\
San Diego, CA 92019} \email[Duy-Minh
Nhieu]{dnhieu@sdcc.edu}

\author{S. D. Pauls}
\address{Department of Mathematics, Dartmouth College, Hanover, NH 03755}
\email{scott.pauls@dartmouth.edu}
\thanks{Fourth author supported in part by NSF Grant DMS-0306752}


\begin{abstract}
In the recent paper \cite{DGNP} we have proved that the only stable
$C^2$ minimal surfaces in the first Heisenberg group $\Hn$ which are
graphs over some plane and have empty characteristic locus must be
vertical planes. This result represents a sub-Riemannian version of
the celebrated theorem of Bernstein.

\noindent In this paper we extend the result in \cite{DGNP} to $C^2$
complete embedded
minimal surfaces in $\HH$ with empty characteristic locus. We prove that every such a surface without boundary must be a vertical plane. \\


\end{abstract}

\maketitle
\section{\textbf{Introduction}}\label{S:intro}

The study of minimal surfaces has been one of the prime drivers of
the study of geometry and calculus of variations in the twentieth
century and, in particular, the Bernstein problem has played a
central role.  Bernstein proved his Theorem \cite{Bernstein}, that a
$C^2$ minimal graph in $\R^3$ must necessarily be an affine plane in
1915 and, almost fifty years later, a new insight of Fleming
\cite{Fle} generated renewed interest in the problem.  The work of
De Giorgi, \cite{DG3}, Almgren, \cite{Al}, Simons, \cite{Sim}, and
Bomberi-De Giorgi-Giusti, \cite{BDG}, culminated in the complete
solution to the Bernstein problem:

\begin{Thm}\label{T:classicalB}
Let $S = \{(x,u(x)) \in \R^{n+1}| x\in \Rn ,  x_{n+1} = u(x)\}$ be
a $C^2$ minimal graph in $\R^{n+1}$, i.e., let $u\in C^2(\Rn)$ be a
solution of the minimal surface equation
\begin{equation}\label{ms}
div\left(\frac{Du}{\sqrt{1 + |Du|^2}}\right) = 0,
\end{equation}
in the whole space. If $n\leq 7$, then there exist $a\in \Rn$,
$\beta \in \R$ such that $u(x) = <a,x> + \beta$, i.e., $S$ must
be an affine hyperplane. If instead $n\geq 8$, then there exist
non affine (real analytic) functions on $\Rn$ which solve
\eqref{ms}.
\end{Thm}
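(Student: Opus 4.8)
The plan is to prove this not by direct elliptic estimates on \eqref{ms}, but by transporting the problem into the geometric measure theory of area-minimizing hypersurfaces, where the sharp threshold $n=7$ actually originates. The starting point is that an entire solution of \eqref{ms} has an \emph{area-minimizing} graph: the vector field
\[
X = \frac{(Du,-1)}{\sqrt{1+|Du|^2}}
\]
on $\R^{n+1}$ has unit length and, precisely because $u$ solves \eqref{ms}, is divergence free, so it calibrates $S$ and shows that $S$ minimizes area among all competitors sharing its boundary in any bounded region. This unlocks the machinery for minimizers — the monotonicity formula, compactness of minimizing boundaries, and the De Giorgi–Almgren regularity theory — and it is the regularity theory for minimizing \emph{cones} that will separate the two cases $n\le 7$ and $n\ge 8$.

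For the affirmative case $n\le 7$ I would argue as follows. The dilations $S_r=r^{-1}S$ are again area-minimizing, and the monotonicity formula furnishes uniform density bounds, so by compactness a subsequence $S_{r_j}$ converges as $r_j\to\infty$ to an area-minimizing \emph{tangent cone at infinity} $C\subset\R^{n+1}$. The crucial structural input, due to De Giorgi, is that because $S$ is a graph this blow-down can be taken to be a cylinder $C_0\times\R$ over an area-minimizing hypercone $C_0\subset\R^{n}$; thus a non-affine $S$ would force a \emph{singular} area-minimizing hypercone in $\R^{n}$, one ambient dimension lower. I would then invoke Simons' theorem, that for $n\le 7$ there are no singular area-minimizing hypercones in $\R^{n}$, to conclude that $C_0$, hence $C$, is a hyperplane. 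A planar tangent cone at infinity means $S$ is asymptotically flat, and by the regularity theory this yields a bound on $|Du|$; Moser's gradient estimate applied to the divergence-form linearization of \eqref{ms} (a Liouville-type argument) then forces $u$ to be affine, so $S$ is a hyperplane.

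The crux — and the step I expect to be by far the hardest, since it is where the number $7$ is manufactured — is Simons' classification of stable minimal hypercones. A minimizing cone is stable, so on its regular part the stability inequality $\int |A|^2\varphi^2\le\int|\nabla\varphi|^2$ holds for all $\varphi\in C_0^\infty$. The engine is Simons' identity, which in the flat ambient space reads $\tfrac12\Delta|A|^2=|\nabla A|^2-|A|^4$, combined with a refined Kato-type inequality $|\nabla A|^2\ge(1+c_n)|\nabla|A||^2$ with $c_n>0$; feeding these into the stability inequality with a test function of the form $\varphi=|A|^{1+q}\eta$ produces an integral inequality that is incompatible with $|A|\not\equiv 0$ exactly when the dimension is small, namely $n\le 7$. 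Tracking the exponent constraints — which collapse precisely at the Simons cone — is the technical heart of the entire argument.

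For the negative direction $n\ge 8$ I would follow Bombieri–De Giorgi–Giusti. The candidate is the Simons cone $C=\{x_1^2+\cdots+x_4^2=x_5^2+\cdots+x_8^2\}\subset\R^8$, which one first shows is area-minimizing by realizing it as one leaf of a foliation of $\R^8$ by minimal hypersurfaces (again a calibration argument). Radial sub- and super-solutions built from this foliation serve as barriers, and Perron's method then yields an entire solution of \eqref{ms} over $\R^8$ that vanishes on $C$, is positive on one side and negative on the other, and is therefore \emph{not} affine; the same construction persists for all $n\ge 8$. This exhibits the required non-affine (in fact real-analytic) solutions and shows the dimensional threshold is sharp.
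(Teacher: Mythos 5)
The paper never proves Theorem \ref{T:classicalB}: it is quoted as the classical resolution of the Bernstein problem, with the proof attributed to the chain of works by Fleming \cite{Fle}, De Giorgi \cite{DG3}, Almgren \cite{Al}, Simons \cite{Sim}, and Bombieri-De Giorgi-Giusti \cite{BDG}. Your sketch is a correct outline of exactly that cited route -- calibration showing entire minimal graphs are area-minimizing, blow-down to a minimizing tangent cone at infinity, De Giorgi's reduction of a non-affine graph over $\Rn$ to a singular minimizing cone in $\Rn$, Simons' nonexistence of singular stable minimal hypercones for $n \leq 7$ with the Liouville step forcing $u$ affine, and the Bombieri-De Giorgi-Giusti barrier construction over the Simons cone for $n \geq 8$ -- so it matches the approach the paper implicitly endorses.
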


Roughly a decade later,  Fischer-Colbrie and Schoen, \cite{S-FC}, and do
Carmo and Peng, \cite{DoC-P}, imposing a stability condition,
independently proved
a far reaching generalization of the Bernstein property:
\begin{Thm}\label{T:classicalS}
Every stable complete minimal surface $S\subset \R^3$ must be a plane.
\end{Thm}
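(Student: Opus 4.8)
The plan is to show that stability forces the second fundamental form $A$ of $S$ to vanish identically, so that $S$ is totally geodesic in $\R^3$ and hence an affine plane. Since $S$ is complete and, being embedded, two-sided, stability means that the second variation of area is nonnegative, i.e.
\[
\int_S |A|^2\,\varphi^2\,dA \;\le\; \int_S |\nabla\varphi|^2\,dA \qquad\text{for all }\varphi\in C^\infty_c(S).
\]
Because $S$ is minimal in $\R^3$, the Gauss equation gives $|A|^2=-2K$, where $K$ is the intrinsic Gauss curvature; in particular $K\le 0$ everywhere. The entire problem is thus reduced to extracting from this single inequality the conclusion $A\equiv 0$.

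The engine of the argument is a Simons-type identity. Writing $f=|A|$, on the open set $\{f>0\}$ one has the differential inequality $f\,\Delta f \ge |\nabla f|^2 - f^4$, where the favorable term $|\nabla f|^2$ (rather than the naive $0$) comes from the improved Kato inequality $|\nabla A|^2\ge 2|\nabla f|^2$ available in real dimension two. I would insert the test function $\varphi=f\,\psi$, with $\psi\in C^\infty_c(S)$ a cutoff, into the stability inequality and simultaneously integrate the Simons inequality against $\psi^2$; after integrating by parts, the cross terms $\int_S f\psi\,\nabla f\cdot\nabla\psi$ cancel and one is left with the clean estimate
\[
\int_S f^4\,\psi^2\,dA \;\le\; 4\int_S f^2\,|\nabla\psi|^2\,dA.
\]
This is the heart of the matter: it trades a quartic bound on the left for a quadratic expression supported only on the annulus where $\psi$ varies.

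To finish, I would choose $\psi$ to be a logarithmic cutoff adapted to the intrinsic geodesic distance on $S$, equal to $1$ on a geodesic ball $B_R$ and decaying to $0$ on $B_{R^2}$, so that $\int_S|\nabla\psi|^2\to 0$ as $R\to\infty$, and then pass to the limit in the displayed estimate to conclude $\int_S f^4=0$, hence $f\equiv 0$. The \emph{main obstacle} is precisely the control of the right-hand side $\int_S f^2\,|\nabla\psi|^2$: the cutoff argument only closes once one has a priori control of the total curvature $\int_S|A|^2=-2\int_S K$ and of the area growth of $S$, neither of which is handed to us. Securing these estimates, using completeness of $S$ in an essential way, is the delicate core of the do Carmo--Peng approach.

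As an alternative route that circumvents the area-growth bookkeeping, I would follow Fischer-Colbrie--Schoen and use that stability, i.e.\ nonnegativity of the Schr\"odinger form $\varphi\mapsto\int_S(|\nabla\varphi|^2-|A|^2\varphi^2)$, is equivalent to the existence of a positive solution $u>0$ of the Jacobi equation $\Delta u+|A|^2u=0$ on all of $S$. Performing the conformal change $\tilde g=u^2 g$ and using $|A|^2=-2K$ together with $\Delta u=2Ku$, a direct computation yields $\tilde K = u^{-2}\bigl(-K+|\nabla\log u|^2\bigr)\ge 0$, so the new metric has nonnegative Gauss curvature. A complete noncompact surface with $\tilde K\ge 0$ is conformally parabolic, and parabolicity is a conformal invariant, so $S$ itself admits cutoffs $\psi_j\to 1$ with $\int_S|\nabla\psi_j|^2\to 0$; feeding these into the stability inequality again forces $\int_S|A|^2=0$. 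In this version the obstacle simply migrates to verifying completeness of $\tilde g$, equivalently the conformal type of $S$, which is where the real work lies.
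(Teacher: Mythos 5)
A preliminary remark: the paper does not prove Theorem \ref{T:classicalS} at all. It is the classical Fischer-Colbrie--Schoen / do Carmo--Peng theorem, quoted from \cite{S-FC} and \cite{DoC-P} purely as motivation for the sub-Riemannian analogue the paper actually establishes. So your attempt can only be measured against the literature, not against an argument inside the paper.

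Your outline is a faithful road map of the two standard proofs, and the individual ingredients you state are correct: the stability inequality $\int_S|A|^2\varphi^2\,dA\le\int_S|\nabla\varphi|^2\,dA$ for two-sided surfaces, the identity $|A|^2=-2K$ for minimal surfaces in $\R^3$, the Simons identity combined with the improved Kato inequality $|\nabla A|^2\ge 2\bigl|\nabla|A|\bigr|^2$ (valid precisely in dimension two, which is what makes the coefficient favorable), and the conformal-change computation $\tilde K=u^{-2}\bigl(-K+|\nabla\log u|^2\bigr)\ge 0$ for a positive Jacobi field $u$. The cross-term bookkeeping after inserting $\varphi=f\psi$ does close (with absorption via Cauchy--Schwarz rather than exact cancellation), yielding an estimate of the form $\int_S f^4\psi^2\,dA\le C\int_S f^2|\nabla\psi|^2\,dA$.

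However, in both routes you stop exactly where the difficulty of the theorem begins, and you say so yourself, so the proposal has a genuine gap rather than being a complete proof. In the first route, the displayed estimate is useless without a priori control of $\int_{B_{R^2}\setminus B_R}|A|^2\,dA$ or of the area growth of intrinsic geodesic balls; neither follows from anything you have written, and establishing (for instance) quadratic area growth or finite total curvature of a complete stable minimal surface is itself a substantial theorem requiring a separate argument -- it cannot be waved in as ``using completeness in an essential way.'' In the second route, the claim that a complete surface with $\tilde K\ge 0$ is parabolic requires the conformal metric $\tilde g=u^2g$ to be \emph{complete}, and nothing prevents the Jacobi field $u$ from decaying at infinity; Fischer-Colbrie and Schoen circumvent precisely this by a more delicate analysis of the conformal type of $S$ (passing to the universal cover, ruling out the hyperbolic disk using stability), not by the completeness of $\tilde g$. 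Naming where the work lies is not the same as doing it: as submitted, both branches terminate in an unproven analytic input on which the entire conclusion rests.
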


Here, stable means that on every compact set $S$
minimizes area up to order two. We note in passing that, thanks to
the strict convexity of the area functional $\mathcal A(u) =
\int_\Om \sqrt{1+|Du|^2} dx$, where $\Om\subset\subset \Rn$, for Euclidean graphs on $\R^n$ the
stability assumption is automatically satisfied.

The purpose of this paper is to prove an analogue of Theorem
\ref{T:classicalS} in the sub-Riemannian Heisenberg group $\HH$ (for
the relevant definitions we refer the reader to the next section).
The study of the Bernstein problem in this setting has received
increasing attention over the last decade. The existence of minimal
surfaces in sub-Riemannian spaces was established by two of us in
\cite{GarNh} by developing in such setting the methods of the
geometric measure theory. The study of minimal graphs in the
Heisenberg group was first approached by one of us in
\cite{Pauls:minimal}, by Cheng, Hwang, Malchiodi and Yang
\cite{CHMY} (who studied the problem in a more general class of
pseudohermitian $3$-manifolds), by three of us in
\cite{DGN:minimal}, and by two of us in \cite{GP}.

Henceforth in this paper, following a perhaps unfortunate but old
tradition, by \emph{minimal} we intend a $C^2$ surface $S\subset
\HH$ whose sub-Riemannian, or horizontal mean curvature $\mathcal H$
(see Proposition \ref{P:mc} below for its expression) vanishes
identically on $S$. In these initial investigations, a number of
nonplanar minimal graphs over the $xy$-plane are produced
(\cite{Pauls:minimal,CHMY,GP}) and indeed are classified (first in
\cite{CHMY}, with an alternate proof in \cite{GP}).  A prototypical
example is given by the surface $t=xy/2$ which is an entire minimal
graph over the $xy$-plane. However, this example and all other
entire minimal graphs over the $xy$-plane must have non empty
characteristic locus (this fact was proved independently in
\cite{CHMY} and \cite{GP}). We recall that the latter is defined as
the set of points of the surface at which the two bracket generating
vector fields $X_1, X_2$ become tangent to the surface itself.

In some of these same papers, new examples were discovered of entire
minimal graphs over some plane, but with an empty characteristic
locus. In \cite{GP}, two of us first produced infinitely many examples of
such graphs, one of which is given by
\begin{equation}\label{ce1}
x = y\ \tan(\tanh(t)).\end{equation}

Moreover, as announced in
\cite{CHMY} (this and many other examples are shown in more detail
in \cite{CH}), the surface
\begin{equation}\label{ce2}
x = y\ t
\end{equation}
is also noncharacteristic and minimal.  From the point of view of
the Bernstein problem, these examples would indicate a failure of
the property - there exists a rich reservoir of graphs over the
$xy$-plane which are minimal (although they have characteristic
points) and an equally rich reservoir of nonplanar noncharacteristic minimal
graphs over the $yt$-plane (or the $xt$-plane). In the positive
direction, the work \cite{GP} shows that graphs over vertical planes
must have a specific structure indicating some kind of rigidity (see
also \cite{CH} for other classification results).

In \cite{DGN:stable} the first three authors continued the
investigation into noncharacteristic graphs by asking a more refined
question:  are surfaces such as \eqref{ce1} or \eqref{ce2} local
minima? Just as in the classical case, sub-Riemannian minimal
surfaces are shown to merely be critical points of the relative area
functional (the so-called horizontal perimeter). Since this
functional is shown to lack the fundamental convexity property which
guarantees in the flat case that critical points are global
minimizers, the question of stability becomes central. It could
happen in fact that minimal surfaces such as \eqref{ce1},
\eqref{ce2} might fail to be locally area minimizing. Using a basic second
variation formula discovered in \cite{DGN:minimal}, in \cite{DGN:stable} the following surprising theorem is proved.

\begin{Thm}\label{T:DGNstable}  Let $\alpha
  >0,\beta \in \R$, then the surfaces
\[x\ =\ y \ (\alpha t+\beta)\ ,\ \ \;\; y\ =\ x\ (-\alpha t+\beta),\]
are unstable noncharacteristic entire minimal minimal graphs.
\end{Thm}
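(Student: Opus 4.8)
The plan is to prove the four asserted properties in turn, and to treat only the first surface $S_1=\{x=y(\alpha t+\beta)\}$ explicitly: the second surface is the image of $S_1$ under the isometry of $\HH$ that exchanges $x\leftrightarrow y$ and reverses $t$, which swaps the horizontal fields $X_1,X_2$ and therefore transports noncharacteristicity, minimality and the (in)stability inequality from one family to the other. Writing $f=x-y(\alpha t+\beta)=x-\alpha yt-\beta y$, the surface is the zero set of $f$ and is manifestly an entire graph $x=x(y,t)$ over the whole $yt$-plane. With $X_1=\partial_x-\tfrac y2\partial_t$ and $X_2=\partial_y+\tfrac x2\partial_t$ one finds $X_1f=1+\tfrac\alpha2y^2\ge 1$, so $\nabh f$ never vanishes and the characteristic locus is empty. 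For minimality I would substitute $f$ into the expression for the horizontal mean curvature $\mathcal H$ of Proposition \ref{P:mc}: on $S_1$ one has $X_2f=-(\alpha t+\beta)(1+\tfrac\alpha2y^2)$ and $|\nabh f|=(1+\tfrac\alpha2y^2)\sqrt{1+(\alpha t+\beta)^2}$, and a direct differentiation of $X_if/|\nabh f|$ shows $\mathcal H\equiv 0$.

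The substance of the theorem is instability, and here the plan is to invoke the second variation formula for the $H$-perimeter of a noncharacteristic minimal surface from \cite{DGN:minimal}. On a $2$-surface the horizontal distribution meets the tangent space in a single line, spanned by a unit horizontal tangent field $e$ (here $e\propto(\alpha t+\beta)X_1+X_2$); admissible normal variations are parametrized by scalars $\varphi$, and the second variation reduces to a quadratic functional of the type
\[
Q(\varphi)=\int_{S_1}\Big((\mathcal B\varphi)^2-\mathcal W\,\varphi^2\Big)\,d\mu,
\]
where $\mathcal B\varphi$ is the derivative of $\varphi$ along $e$ and $\mathcal W$ is a geometric potential, the surface being stable precisely when $Q\ge 0$ on all $\varphi\in C^\infty_0(S_1)$. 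The strategy is to compute $\mathcal W$ explicitly and show that it is \emph{strictly positive} — this is exactly where the hypothesis $\alpha>0$ is essential, since for $\alpha=0$ the surface degenerates to a vertical plane, which is stable — and then to exploit the ruling of $S_1$ by the (non-compact) integral curves of $e$. Taking $\varphi=a(s)b(\xi)$ with $s$ arclength along the $e$-lines, $\xi$ a transverse coordinate, $b$ a fixed bump and $a$ a slowly varying cutoff supported on $[-R,R]$, I expect the Dirichlet contribution $\int(\mathcal B\varphi)^2$ to be made arbitrarily small while the potential contribution $\int\mathcal W\,\varphi^2$ stays bounded below by a positive multiple of $R$; hence $Q(\varphi)\to-\infty$ and $S_1$ is unstable.

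The main obstacle is purely quantitative. Because $S_1$ is non-compact and the test functions must be compactly supported, one cannot simply take $\varphi$ constant along $e$ (which would force $\varphi\equiv 0$) and must pay a Dirichlet cost for cutting off along the $e$-lines. Instability therefore hinges on showing that the positive potential $\mathcal W$ carries enough mass along these lines to defeat that cost — equivalently, that after separating variables the bottom of the spectrum of the one-dimensional Jacobi-type operator $-\mathcal B^2-\mathcal W$ is negative. Verifying this requires the explicit $t$-linear structure of the graph together with the sign $\alpha>0$, and it is the Heisenberg analogue of the curvature estimate that underlies Theorem \ref{T:classicalS} in the Euclidean case.
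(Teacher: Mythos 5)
Your reduction of the second surface to the first via the isometry $(x,y,t)\mapsto(y,x,-t)$, and your verifications that $S_1$ is an entire graph, noncharacteristic (indeed $X_1f=1+\tfrac{\alpha}{2}y^2\ge 1$) and minimal, are all correct. The gap is in the instability argument, which is the whole substance of the theorem, and it is not a repairable technicality of the scheme you propose: both quantitative expectations on which that scheme rests are false — in fact reversed. Parametrize $S_1$ by $(y,t)$, so that the rulings (integral curves of $\n_H^\perp$) are the lines $t=\mathrm{const}$. A direct computation from the level-set representation gives, on $S_1$,
\begin{equation*}
d\sigma_H\ =\ \Bigl(1+\tfrac{\alpha}{2}y^2\Bigr)\sqrt{1+(\alpha t+\beta)^2}\ dy\,dt,
\qquad
\mathcal W\ =\ \ob^2-2\mathcal A\ =\ \frac{2\alpha}{\bigl(1+\tfrac{\alpha}{2}y^2\bigr)^2\bigl(1+(\alpha t+\beta)^2\bigr)},
\end{equation*}
so $\mathcal W>0$ exactly when $\alpha>0$, as you anticipated. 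But look at the asymptotics along the noncompact ruling direction: the perimeter measure grows like $y^2$ while the potential decays like $y^{-4}$. Consequently the potential mass along each ruling is \emph{finite}, since $\int_\R \mathcal W\,\bigl(1+\tfrac{\alpha}{2}y^2\bigr)dy = 2\alpha\int_\R\frac{dy}{1+\frac{\alpha}{2}y^2}<\infty$; it does not grow like $R$. And the Dirichlet cost of a slowly varying cutoff does not tend to $0$: if $a\in C^\infty_0(\R)$ equals $1$ on $[-R,R]$, then by Cauchy--Schwarz $1=\bigl(\int_R^\infty a'\,dy\bigr)^2\le \int_R^\infty (a')^2y^2\,dy\cdot\int_R^\infty y^{-2}dy$, whence $\int_\R (a')^2\bigl(1+\tfrac{\alpha}{2}y^2\bigr)dy\ \ge\ \tfrac{\alpha}{2}R\to\infty$. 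So your separated test functions make the quadratic form tend to $+\infty$, not $-\infty$. Relatedly, strict positivity of $\mathcal W$ cannot by itself be the mechanism: the Dirichlet form with this growing weight satisfies a Hardy-type inequality, so a small positive multiple of the very same $\mathcal W$ would be dominated by the gradient term.

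What actually works — this is the proof in \cite{DGN:stable} which the paper cites for this theorem, and it is reproduced in greater generality in Section \ref{S:explicit} (Theorem \ref{I:unstable}), under which your surfaces fall since they are strict intrinsic graphical strips with $G'=\alpha>0$ — is to take test functions that \emph{decay} like $1/|y|$ along the rulings, i.e.\ near-optimizers of the Hardy inequality: in the paper's coordinates, $\psi_k=\chi(s)\chi_k(u)\,Q_k(u,s)^{-1/2}$ with $Q(u,s)=G'(s)\tfrac{u^2}{2}+F'(s)u+\sigma'(s)$. With this choice both terms of the second variation are finite and computable in the limit: the gradient term converges to $\tfrac{\pi}{2}\int_J(\cdots)\,ds$ (Lemma \ref{LHS}) and the potential term to $2\pi\int_J(\cdots)\,ds$ (Lemma \ref{RHS}), with the \emph{same} positive integrand, so the limit of the second variation is $\bigl(\tfrac{\pi}{2}-2\pi\bigr)\int_J(\cdots)\,ds<0$. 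In other words, the instability is a sharp-constant, borderline phenomenon (the Carleman--Wirtinger/Hardy comparison, where minimality forces the potential to exceed the Hardy threshold by exactly the factor $4$), not a ``cheap cutoff versus linearly growing potential'' phenomenon. Unless you replace your cutoff scheme by such a decaying variation and carry out this comparison, the proposal does not establish instability.
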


We emphasize that these surfaces are also global intrinsic graphs in the sense of \cite{FSSC2}, \cite{FSSC3}, see Definition
\ref{D:intgraph} below. We also note that
Theorem \ref{T:DGNstable} shows that an analogue of the Bernstein
property cannot hold unless we assume the surface be
noncharacteristic and stable.

The second variation formula in \cite{DGN:minimal} reduces to a
stability inequality of Hardy type on the surface. Another major
tool in the proof of Theorem \ref{T:DGNstable} is the reduction of
such Hardy type inequality to a one dimensional integral inequality
of Carleman-Wirtinger type which is confirmed by explicitly
constructing a variation which decreases perimeter. In \cite{DGNP},
we continued this line of investigation and provided a positive
answer to the following version of the Bernstein problem.

\begin{Thm}[\textbf{Bernstein Theorem 1, \cite{DGNP}}]\label{T:bern1}
In $\HH$ the only stable $C^2$ minimal entire graphs, with empty
characteristic locus, are the vertical planes \eqref{vp0}.
\end{Thm}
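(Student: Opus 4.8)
The plan is to combine the structure theory of noncharacteristic minimal surfaces with the second variation formula of \cite{DGN:minimal} and a one–dimensional spectral argument. First I would recall, from \cite{GP} and \cite{CHMY}, that a $C^2$ minimal surface $S\subset\HH$ with empty characteristic locus is foliated by the integral curves of the horizontal tangent field $\nup$, and that the vanishing of the horizontal mean curvature forces these \emph{characteristic curves} to be horizontal (Legendrian) straight lines. Since $S$ is moreover an entire graph over a plane — necessarily not the $xy$-plane, as entire graphs over it are characteristic — each such line is a complete copy of $\R$, parametrized by arc length and projecting injectively onto the base plane. This lets me introduce adapted coordinates $(s,\rho)$ on $S$, with $s$ the arc length along the characteristic lines and $\rho$ parametrizing a seed curve transverse to the foliation, and to write the horizontal perimeter measure as $d\sx = J(s,\rho)\,ds\,d\rho$.

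Next I would specialize the second variation formula of \cite{DGN:minimal}. The decisive feature of the sub-Riemannian setting is that the perimeter is degenerate in the vertical direction, so the quadratic form governing stability only involves differentiation along the characteristic field $\nup$. Writing $\nup\psi$ for the derivative of $\psi$ along that unit field, the stability inequality takes the Hardy form
\begin{equation*}
\mathcal Q(\psi)\ =\ \int_S \bigl(\nup\psi\bigr)^2\, d\sx\ -\ \int_S V\,\psi^2\, d\sx\ \geq\ 0,\qquad \psi\in C^\infty_0(S),
\end{equation*}
where the potential $V\geq 0$ is a geometric quantity measuring the rotation of the Legendrian lines along the seed; it vanishes identically precisely when all characteristic lines are parallel, that is, when $S$ is a vertical plane.

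The heart of the argument is then to collapse $\mathcal Q\geq 0$ to a one-dimensional inequality along the complete characteristic lines. Inserting product test functions $\psi(s,\rho)=f(s)\,\chi(\rho)$ and letting the transverse cutoff $\chi$ concentrate near a fixed leaf yields, for a suitable weighted potential, a family of one-dimensional inequalities of Carleman–Wirtinger type,
\begin{equation*}
\int_\R f'(s)^2\, ds\ \geq\ \int_\R V(s)\, f(s)^2\, ds,\qquad f\in C^\infty_0(\R).
\end{equation*}
Here I exploit completeness: taking $f\equiv 1$ on $[-R,R]$ and decaying linearly to $0$ on $[-2R,-R]\cup[R,2R]$ makes the Dirichlet energy $\int_\R f'^2\,ds=O(1/R)\to 0$, while $\int_\R V f^2\,ds\to\int_\R V\,ds$. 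If $V$ were positive anywhere the right-hand side would be strictly positive, violating $\mathcal Q\geq 0$ for $R$ large; hence $V\equiv 0$ on every characteristic line, so $V\equiv 0$ on $S$, and by the geometric meaning of $V$ the surface must be a vertical plane.

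The main obstacle I anticipate lies in the second step: rigorously showing that the full second variation genuinely reduces to the one-dimensional Hardy form with a \emph{nonnegative} potential, and that this reduction survives the varying Jacobian $J$ and the concentration of the transverse cutoff. This is exactly where the hypothesis that $S$ be an \emph{entire graph} is indispensable, since it guarantees that the characteristic lines are complete copies of $\R$ over which the spreading argument drives the Dirichlet energy to zero; the breakdown of this completeness for general complete embedded surfaces is precisely the difficulty that the present paper must confront.
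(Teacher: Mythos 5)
Your overall skeleton (ruling by horizontal lines, second variation concentrated along the ruling, reduction to a one--dimensional inequality on each complete line) is indeed the skeleton that \cite{DGNP} and this paper follow, but the decisive step of your argument fails: the Fischer-Colbrie--Schoen style plateau cutoff does not work here, because the horizontal perimeter measure along a characteristic line is not the flat measure. In the adapted coordinates (the paper's $(u,s)$, with $u$ running along the lines --- your $s$), the measure is $\sqrt{1+G(s)^2}\,Q(u,s)\,du\,ds$ where $Q(u,s)=G'(s)\frac{u^2}{2}+F'(s)u+\sigma'(s)$ grows \emph{quadratically} in $u$, and by \eqref{T:strip} the stability form is
\[
\int_{\R\times J}\Bigl((\partial_u\psi)^2\,Q(u,s)\ -\ \psi^2\,\frac{2\sigma'(s)G'(s)-F'(s)^2}{Q(u,s)}\Bigr)\frac{du\,ds}{(1+G(s)^2)^{3/2}}\ .
\]
For your cutoff ($f\equiv1$ on $[-R,R]$, linear on $R\le|u|\le2R$) the gradient term is of order $\frac{1}{R^{2}}\int_{R\le|u|\le2R}u^{2}\,du\asymp R\to\infty$, while the potential term stays \emph{bounded} since $1/Q$ is integrable in $u$; so spreading never contradicts stability, and your claim that the Dirichlet energy is $O(1/R)$ is false precisely because of the Jacobian you flagged as a technical obstacle --- it is not technical, it kills the argument. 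Worse, no cutoff can be rescued: by Cauchy--Schwarz, any compactly supported $f$ with $f(0)=1$ satisfies $\int_\R f'^2\,Q\,du\ \ge\ \bigl(\int_\R Q^{-1}du\bigr)^{-1}>0$, so the weighted Dirichlet energy of every admissible test function is bounded away from zero. What one must actually establish is the \emph{failure} of the weighted Hardy inequality $\int_\R f'^2\,Q\,du\ \ge\ \int_\R f^2\,(2\sigma'G'-F'^2)Q^{-1}\,du$, and that is a matter of sharp constants, not soft asymptotics.

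This is exactly why the proof of the quoted theorem in \cite{DGNP} (and its generalization in Sections \ref{S:var}--\ref{S:explicit} of this paper) proceeds differently: one first shows that any noncharacteristic entire minimal graph which is not a vertical plane contains a (strict) graphical strip, and then destabilizes the strip with the \emph{weighted} test functions $\psi_k=\chi(s)\chi_k(u)\,Q_k(u,s)^{-1/2}$ of \eqref{E:tests}, whose factor $Q^{-1/2}$ exactly compensates the growth of the measure. The conclusion is then a quantitative comparison of constants: by Lemma \ref{LHS} the gradient term converges to $\frac{\pi}{2}\int_J\chi^2(1+G^2)^{-3/2}G'(2\sigma'G'-F'^2)^{-1/2}ds$, by Lemma \ref{RHS} the potential term converges to $2\pi$ times the same integral, and instability follows from $\frac{\pi}{2}-2\pi<0$. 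Your proposal never reaches this comparison, so it has a genuine gap at its central step. (A lesser gap: the nonnegativity of your potential $V$ and its vanishing exactly on vertical planes are not free; they require the representation theory, namely the identity $F'^2-2\sigma'G'=1-2W_0\kappa+(|\gamma'|^2+1)(|\gamma'|^2-1)$ combined with the noncharacteristicity condition $1-2W_0\kappa<0$ of Theorem \ref{T:thma}, which you assume without proof.)
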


To illustrate the strategy behind this result, we recall a
definition from \cite{DGNP}.

\begin{Def}\label{D:gs}
We say that a $C^2$ surface $S\subset \HH$ is a \emph{graphical
strip} if there exist an interval $I\subset \R$, and $G \in C^2(I)$,
with $G'\geq 0$ on $I$, such that, after possibly a left-translation
and a rotation about the $t$-axis, then either
\begin{equation}\label{ceI}
S\ =\ \{(x,y,t)\in  \HH \mid  (y,t) \in \R \times I , x = y G(t)\},
\end{equation}
or
\begin{equation}\label{ceII}
S\ =\ \{(x,y,t)\in  \HH \mid  (x,t) \in \R \times I , y = - x
G(t)\}.
\end{equation}
If there exists $J\subset I$ such that $G'>0$ on $J$, then we call
$S$ a \emph{strict graphical strip}.
\end{Def}

It should be immediately clear to the reader that the surfaces in \eqref{ce1} or \eqref{ce2}
are examples of strict graphical strips in which one can take $J = I=\R$. Here is one of the two central results of \cite{DGNP}.

\begin{Thm}\label{T:DGNP1}
Any strict graphical strip is an unstable minimal surface with empty
characteristic locus. As a consequence, any minimal surface
containing a strict graphical strip is unstable.
\end{Thm}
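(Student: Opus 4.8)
The plan is to prove instability by exhibiting an explicit compactly supported variation of the graphical strip that strictly decreases the horizontal perimeter to second order. The natural strategy, following the philosophy of \cite{DGN:stable} and \cite{DGNP}, is to use the second variation formula of \cite{DGN:minimal}, which on a noncharacteristic minimal surface reduces to a stability inequality of Hardy type. Concretely, for a normal variation generated by a function $\phi$ supported on $S$, the second variation takes the form of a quadratic functional $Q(\phi)$ built from the horizontal gradient of $\phi$ along $S$ minus a potential term involving the geometry of the strip; instability amounts to producing some admissible $\phi$ with $Q(\phi)<0$. First I would record that a graphical strip is automatically noncharacteristic and minimal: since $S$ is ruled by the parameter along which $x=yG(t)$ (resp.\ $y=-xG(t)$), a direct computation of the horizontal Gauss map and horizontal mean curvature $\mathcal H$ (via Proposition \ref{P:mc}) shows $\mathcal H\equiv 0$ and that $X_1,X_2$ are never both tangent, so the characteristic locus is empty.

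The heart of the argument is the reduction of the two-dimensional Hardy-type inequality on $S$ to a one-dimensional Carleman--Wirtinger inequality in the variable $t$, exactly the mechanism used in \cite{DGN:stable}. Here I would exploit the product-like structure of the strip: coordinatizing $S$ by $(y,t)\in\R\times I$ (or $(x,t)$), the surface carries a distinguished family of horizontal curves (the characteristic curves / integral curves of the horizontal normal's perpendicular), and along each such curve the second variation decouples into a one-parameter problem. Choosing the test function $\phi$ to be a product of a cutoff in the ruling direction and a carefully chosen profile $\psi(t)$ depending on $G$, the functional $Q(\phi)$ collapses to a weighted one-dimensional integral of the Carleman--Wirtinger type $\int_I \big(a(t)\,\psi'(t)^2 - b(t)\,\psi(t)^2\big)\,dt$, where the weights $a,b$ are explicit functions of $G,G'$ determined by the geometry. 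The strictness hypothesis $G'>0$ on $J$ is what guarantees the potential term $b(t)$ is genuinely positive on a set of positive measure, so that for an appropriately concentrated $\psi$ supported near $J$ the negative part dominates and $Q(\phi)<0$.

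The main obstacle I expect is the analytic verification that the one-dimensional functional can actually be made negative, i.e.\ that the Carleman--Wirtinger inequality fails in the relevant regime. This is delicate because the weights degenerate or blow up near the endpoints of $I$ and because the quadratic form mixes the ruling-direction gradient with the $t$-direction gradient; one must check that the cross terms and the contribution from the cutoff in the ruling direction do not swamp the gain coming from the potential. The resolution is the standard trick of letting the cutoff spread out (so its Dirichlet energy in the ruling direction tends to zero per unit length) while holding the profile $\psi$ fixed on $J$, thereby isolating the strictly negative one-dimensional part; this is precisely where the explicit construction of a perimeter-decreasing variation in \cite{DGN:stable} is invoked. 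The final sentence of the theorem is then immediate: if a minimal surface $S'$ merely contains a strict graphical strip, one extends the compactly supported destabilizing variation by zero outside the strip, and since both $Q$ and the admissibility of $\phi$ are local, $S'$ inherits the strict inequality $Q(\phi)<0$ and is therefore unstable as well.
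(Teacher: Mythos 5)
Your overall frame --- the second variation formula of \cite{DGN:minimal}, an explicit compactly supported destabilizing variation, minimality/noncharacteristicity of the strip by direct computation, and extension by zero for the final claim --- is the right one (the paper states Theorem \ref{T:DGNP1} as a result of \cite{DGNP}, but proves its exact analogue, Theorem \ref{I:unstable}, in Sections \ref{S:var}--\ref{S:explicit}; note that the strip $x=yG(t)$ is the intrinsic strip of Definition \ref{intdeltastrip} with $F\equiv 0$, $\sigma(s)=s$, so the relevant quadratic is $Q(u,s)=1+\tfrac{u^2}{2}G'(s)$). However, the central analytic step of your proposal fails. By \eqref{T:strip}, up to the bounded factor $(1+G(s)^2)^{-3/2}$ the second variation is
\[
\int\!\!\int \Big( (\partial_u\psi)^2\, Q(u,s)\ -\ \psi^2\,\frac{2\sigma'(s)G'(s)-F'(s)^2}{Q(u,s)}\Big)\,du\,ds ,
\]
so the Dirichlet weight $Q$ grows \emph{quadratically} in the ruling variable $u$, while the potential weight decays like $u^{-2}$. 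Hence for a pure product $\psi=\chi_k(u)\,\psi_0(s)$ with a spreading cutoff ($|\chi_k'|\le C/k$, supported in $|u|\le 2\delta k$), the potential term converges to a finite negative number, but the cutoff's weighted Dirichlet energy does \emph{not} tend to zero: for the standard cutoff, $\int_{\R}(\chi_k')^2\,Q\,du \gtrsim k^{-2}\cdot k^{3}= k \to \infty$. The ``standard trick'' of spreading the cutoff is precisely what the quadratic growth of $Q$ destroys; your proposed variation has second variation tending to $+\infty$, not something negative, and no choice of the $t$-profile can repair this since the divergence occurs in the $u$-integral.

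The missing idea is a weight \emph{inside} the test function: the paper takes $\psi_k(u,s)=\chi(s)\,\chi_k(u)\,Q_k(u,s)^{-1/2}$ (see \eqref{E:tests}, with $Q_k$ a mollification of $Q$). The factor $Q^{-1/2}$ is the near-optimizer of the weighted one-dimensional Wirtinger/Hardy inequality in the \emph{ruling} variable: after $u\mapsto\theta$ with $\tan\theta$ proportional to a shifted, rescaled $u$, the quotient $\int(\psi')^2Q\,du \big/ \int \psi^2 Q^{-1}\,du$ becomes a Dirichlet Rayleigh quotient on an interval of length $\pi$, and $Q^{-1/2}$ corresponds to the first eigenfunction $\cos\theta$. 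With this weight the cutoff contribution vanishes as $k\to\infty$, while the profile's own Dirichlet energy converges to a \emph{nonzero} limit: Lemmas \ref{LHS} and \ref{RHS} give exactly $\tfrac{\pi}{2}\int_J(\cdots)\,ds$ against $-2\pi\int_J(\cdots)\,ds$, and instability follows because $\tfrac{\pi}{2}<2\pi$. Two corrections to your picture: (i) one cannot ``isolate the strictly negative one-dimensional part'' by making the gradient term negligible --- the infimum of (gradient term)$/|$potential term$|$ over all admissible test functions is $1/4$, not $0$, so the conclusion genuinely requires computing both constants and comparing; and (ii) the one-dimensional Carleman--Wirtinger reduction lives in the ruling variable $u$ (resp.\ $y$), not in $t$ as in your outline --- the $t$ (resp.\ $s$) variable carries only a fixed cutoff $\chi$.
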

The proof of Theorem \ref{T:DGNP1} involves, among other things, an
adaptation of the technique in \cite{DGN:stable} which leads to the
construction of an explicit variation along which the horizontal
perimeter strictly decreases. Our second main result in \cite{DGNP}
consists in proving, using the techniques of \cite{GP}, that every
noncharacteristic minimal graph over some plane which is not itself
a vertical plane
\begin{equation}\label{vp0}
\Pi_\gamma\ =\ \{(x,y,t)\in \HH\mid a x + b y = \gamma\},
\end{equation}
contains a strict graphical strip. Combining this result with
Theorem \ref{T:DGNP1}, we obtain Theorem \ref{T:bern1}.

Another approach to the sub-Riemannian Bernstein problem arises when
considering an intrinsic notion of graph. Observe that in flat
$\R^3$ a graph of the type $S = \{x = \phi(y,z)\mid(y,z)\in \Om\}$,
can be written as $S= \{(0,u,v) + \phi(u,v) e_1\mid (u,v)\in \Om\}$, where we have let $e_1 = (1,0,0)$.
Inspired by this observation Franchi, Serapioni and Serra Cassano
proposed the following notion of intrinsic graph adapted to the
non-Abelian group structure of $\HH$.

\begin{Def}\label{D:intgraph}
A $C^2$ surface $S$ is an \emph{intrinsic $X_1$-graph} if there exist a
domain $\Omega \subset \R^2_{uv}$ and $\phi \in C^2(\Omega)$, such
that $S=\{(0,u,v) \circ  \phi(u,v)e_1 | (u,v) \in \Omega\}$.
\end{Def}
We note that one basic consequence of this definition is that $S$
has empty characteristic locus. This follows from the fact that the
vector field $X_1$ is always transversal to the surface.
Interestingly, if we assume that $\Om$ be bounded, then the
horizontal perimeter of $S$ is given by the formula
\begin{equation}\label{igper} \mathscr{P}_H(S)\ =\ \int_\Om \sqrt{1
+ \mathcal B_\phi(\phi)^2}\ du\ dv,
\end{equation}
where we have denoted by $\mathcal B_\phi(f) = f_u + \phi f_v$ the
linearized Burger's operator. Notice that $\mathcal B_\phi(\phi) =
\phi_u + \phi \phi_v$ is the nonlinear inviscid Burger's operator.
Definition \ref{D:intgraph} was first introduced in \cite{FSSC2} and
developed further in \cite{FSSC3,AS,BSV,GS}.  In \cite{BSV}, Barone
Adesi, Serra Cassano and Vittone prove the following Bernstein theorem for
these types of graphs.

\begin{Thm}[\textbf{Bernstein Theorem 2, \cite{BSV}}]\label{T:bern2} The only $C^2$, stable minimal
entire intrinsic $X_1$-graphs are the vertical planes.
\end{Thm}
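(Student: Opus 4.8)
The plan is to run a direct variational argument on the horizontal perimeter \eqref{igper}, writing $S$ as the intrinsic graph of $\phi\in C^2(\R^2)$, setting $W:=\mathcal{B}_\phi(\phi)=\phi_u+\phi\phi_v$, and abbreviating $\mathcal A(\phi)=\int_{\R^2}\sqrt{1+W^2}\,du\,dv$. Computing the first variation along $\phi+\lambda\psi$, $\psi\in C_0^\infty(\R^2)$, and integrating by parts puts the minimal surface equation in the Burgers-type divergence form
\[
\mathcal{B}_\phi\!\left(\frac{W}{\sqrt{1+W^2}}\right)=0 .
\]
Since $t\mapsto t/\sqrt{1+t^2}$ is strictly increasing this is equivalent to $\mathcal{B}_\phi(W)=0$: the angle function $W$ is constant along the integral curves of $\mathcal{B}_\phi=\p_u+\phi\,\p_v$, which (because $\dot u=1$, $\dot v=\phi$ and $\tfrac{d}{ds}\phi=W$ is constant on each) are the parabolas $v=v_0+\phi_0 s+\tfrac{W}{2}s^2$. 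Thus $S$ is foliated by these characteristics and $\phi$ is affine along each. A vertical plane corresponds exactly to $\phi$ affine in $u$ and independent of $v$, so the goal is reduced to proving $\phi_v\equiv 0$.

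Next I would compute the second variation. With $L\psi:=\mathcal{B}_\phi\psi+\phi_v\psi=\p_u\psi+\p_v(\phi\psi)$ a direct computation gives
\[
Q(\psi)=\int_{\R^2}\frac{(L\psi)^2}{(1+W^2)^{3/2}}\,du\,dv+\int_{\R^2}\frac{2W\,\psi\,\psi_v}{\sqrt{1+W^2}}\,du\,dv .
\]
Integrating the cross term by parts in $v$ and using $\p_v\big(W/\sqrt{1+W^2}\big)=W_v(1+W^2)^{-3/2}$, stability becomes the clean inequality
\[
\int_{\R^2}(L\psi)^2\,d\mu\ \ge\ \int_{\R^2}W_v\,\psi^2\,d\mu,\qquad d\mu:=(1+W^2)^{-3/2}\,du\,dv .
\]
Two identities, obtained by differentiating $\mathcal{B}_\phi(W)=0$ in $v$, will drive the rest: along each characteristic (parametrised by $s$, so that $\mathcal{B}_\phi=\tfrac{d}{ds}$) one has $\tfrac{d}{ds}\phi_v=W_v-\phi_v^2$ and $\tfrac{d}{ds}W_v=-\phi_v W_v$, while $W$, hence the weight $(1+W^2)^{-3/2}$, is constant along characteristics.

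The crucial structural point is that $L$ differentiates \emph{only} in the characteristic direction. Passing to coordinates $(s,\xi)$ adapted to the foliation ($\xi$ labelling the characteristic), one has $\mathcal{B}_\phi=\p_s$ and, by Liouville's formula, the Jacobian of the characteristic flow evolves by its divergence $\p_u(1)+\p_v(\phi)=\phi_v$; consequently $Q$ becomes a direct integral over $\xi$ with no transverse derivative, and stability of $S$ is \emph{equivalent} to a one–dimensional inequality on every characteristic. After the change of dependent variable that removes the flow weight this reads
\[
\int_{\R}(B')^2\,ds\ \ge\ \int_{\R}\Big(\tfrac32 W_v-\tfrac34\phi_v^2\Big)B^2\,ds\qquad\forall\,B\in C_0^\infty(\R) .
\]
Taking the ground state $B_0:=W_v^{1/2}$ (legitimate where $W_v>0$), the ODEs above yield $-B_0''-\big(\tfrac32 W_v-\tfrac34\phi_v^2\big)B_0=-W_v^{3/2}$, so the ground–state substitution $B=B_0\chi$ gives the explicit value $\int_{\R}W_v\,\chi'^2\,ds-\int_{\R}W_v^{2}\,\chi^2\,ds$. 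Letting $\chi=\chi_R\uparrow 1$ through logarithmic-type cutoffs drives this strictly below zero, exactly in the spirit of the one–dimensional Carleman–Wirtinger inequalities of \cite{DGN:stable} and \cite{DGNP}. This explicit destabilising variation shows $W_v>0$ anywhere is incompatible with stability, whence $W_v\le 0$ everywhere.

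Finally I would invoke completeness. On an entire graph the characteristics are complete, so $\phi_v$ solves the Riccati equation $\tfrac{d}{ds}\phi_v=W_v-\phi_v^2$ for all $s\in\R$ along each of them; since $W_v\le 0$ this forces $\tfrac{d}{ds}\phi_v\le -\phi_v^2$, whose only globally defined solution is $\phi_v\equiv 0$ (any nonzero value produces finite–parameter blow-up, contradicting $\phi\in C^2(\R^2)$), and then $W_v\equiv 0$ as well. With $\phi_v\equiv 0$ we get $W=\phi_u=\phi'(u)$ and $\mathcal{B}_\phi(W)=\phi''=0$, so $\phi$ is affine and $S$ is a vertical plane. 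The main obstacle is the third step: making the passage to foliation coordinates globally rigorous on an entire graph—so that the characteristic foliation is a genuine diffeomorphism and compactly supported $\psi$ pull back correctly—and controlling the cutoff estimates through the coupled ODE system, which is precisely where the completeness hypothesis is indispensable.
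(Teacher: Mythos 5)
Your proposal is sound, and it is worth noting first that this paper never proves Theorem \ref{T:bern2} itself: the statement is quoted from \cite{BSV}, and within this paper it arises as a special case of Theorem \ref{I:main}, whose proof runs through Theorem \ref{I:unstable} (instability of strict intrinsic graphical strips, via the explicit variation \eqref{E:tests} and Lemmas \ref{RHS}, \ref{LHS}) and Theorem \ref{T:existstrip0} (existence of such strips). Your argument shares the same backbone as that machinery --- the characteristics of $\mathcal B_\phi$, along which $W=\mathcal B_\phi(\phi)$ is constant, the Jacobian of the characteristic flow is a quadratic polynomial of the flow parameter, and the second variation reduces to a one-dimensional Hardy-type form (your stability inequality is exactly \eqref{svig} after one integration by parts, and your reduced form $\int (B')^2-\int(\tfrac32 W_v-\tfrac34\phi_v^2)B^2$ agrees with what one gets from Corollary \ref{T:varstrip} under the substitution $B=J^{1/2}\psi$). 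But the execution differs in three genuine ways. (i) Since the hypothesis already hands you an entire intrinsic graph, the global characteristic coordinates come for free, and you legitimately bypass all of Section \ref{S:strips} (Theorem \ref{T:thma}, Lemmas \ref{L:ruled}--\ref{lemma3}), which the paper needs only because Theorem \ref{I:main} treats surfaces that are not graphs. (ii) Your destabilizing variation is different: in the $B$-gauge the paper's test function $\psi_k\propto Q_k^{-1/2}$ corresponds to $B\approx$ cutoffs, and negativity comes from the explicit balance $\pi/2-2\pi<0$; your ground state $B=W_v^{1/2}\chi$ makes the gradient term vanish in the limit, so the form tends to $-\int W_v^2<0$. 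Both work (I checked your identity $B_0''+(\tfrac32 W_v-\tfrac34\phi_v^2)B_0=W_v^{3/2}$ and the cutoff limits, using $W_v\sim 2/s^2$); yours trades the paper's explicit integrals for a Schr\"odinger-operator identity. (iii) Your endgame (stability forces $W_v\le 0$, then the Riccati equation $\phi_v'=W_v-\phi_v^2$ on complete characteristics forces $\phi_v\equiv 0$) replaces the paper's geometric dichotomy; in fact on an entire graph completeness alone already forces $W_v\ge 0$, since the Jacobian $J(s,\xi)=1+\phi_v(0,\xi)s+W_v(0,\xi)\,s^2/2$ must stay positive for all $s\in\R$, so stability then gives $W_v\equiv 0$ at once.

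Two technical points need patching, neither fatal. First, regularity: with $\phi\in C^2$ the coefficient $W_v$ is only continuous transversally, so your test function $J^{-1/2}W_v^{1/2}\chi(s)\eta(\xi)$ is not in $C^2_0$ (the paper faces the same issue and resolves it by mollifying $F,G,\sigma$ in \eqref{E:tests}); you need either that mollification or a density argument, which is available because both sides of the stability inequality involve only first derivatives of $\psi$ and continuous coefficients. Relatedly, the ODE $\tfrac{d}{ds}W_v=-\phi_v W_v$ cannot be obtained by differentiating $\mathcal B_\phi(W)=0$ in $v$ when $\phi$ is merely $C^2$; it should instead be derived from the flow formula $W_v=W_v(0,\xi)/J(s,\xi)$ (equivalently $(1/W_v)''=1$ along characteristics), which is legitimate at $C^2$ regularity. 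Second, the asserted ``equivalence'' of stability with the per-characteristic one-dimensional inequality requires, in the direction you actually use, the concentration argument with product test functions $B(s)\eta_\epsilon(\xi)$ and continuity of the coefficients in $\xi$; this is routine but should be written out.
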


The proof of Theorem \ref{T:bern2} relies on a clever choice of
coordinates, suggested by the study of the characteristic curves of
the solutions of the minimal surface equation, which for an
intrinsic graph becomes

\begin{equation}\label{mseig}
\mathcal B_\phi(\mathcal B_\phi(\phi))\ =\ 0\ .
\end{equation}
Such change of coordinates allows the authors to reduce to a case which
can again be solved using the one dimensional reduction techniques used in
\cite{DGN:stable} to prove Theorem \ref{T:DGNstable}.

We are now in a position to discuss the results of this paper.
First, we introduce a definition which is related to Definition
\ref{D:gs} and that is suggested by the analysis of the double
Burger equation \eqref{mseig}. Suppose we are given some interval $J
= (-4\epsilon,4\epsilon)\subset \R$, $\epsilon >0$, and functions
$F, G, \sigma\in C^2(J)$ satisfying the condition
\begin{equation}\label{nd}
F'(s)^2 < 2 \sigma'(s) G'(s), \ \ \ \text{for every}\ s \in J.
\end{equation}

We note explicitly that \eqref{nd} implies, in particular, that
$\sigma'(s) G'(s)>0$ for every $s \in J$. If we consider the mapping
$\Psi: \R \times J \to \R^2$ from the $(u,s)$ to the $(u,v)$ plane
defined by $\Psi(u,s) = (u,v)$, where $v$ is defined by the equation
\begin{equation}\label{if}
v = v(u,s) = G(s) \frac{u^2}{2} + F(s) u + \sigma(s),
\end{equation}
then we see that the Jacobian determinant of $\Psi$ is given by
\begin{align}\label{jac}
\det J_\Psi(u,s) & =  \det \begin{pmatrix} 1 & 0 \\  G(s)u + F(s) &
 G'(s) \frac{u^2}{2} +  F'(s)u  + \sigma'(s)
\end{pmatrix}
\\
& = G'(s) \frac{u^2}{2} +  F'(s)u  + \sigma'(s). \notag\end{align}
Thanks to \eqref{nd} the Jacobian of $\Psi$ is always different from
zero. We emphasize at this moment that the continuity of the first
derivatives of the functions $F, G$ and $\sigma$, along with the
assumption \eqref{nd}, guarantee that, possibly restricting the
interval $J = (-4\epsilon,4\epsilon)$, we can always force the map
$\Psi$ to be globally one-to-one, hence a $C^2$ diffeomorphism of
$\R\times J$ onto its image $\Psi(\R\times J)$. We denote with
$\Psi^{-1}(u,v) = (u,s(u,v))$ the inverse $C^2$ diffeomorphism .
When we write $s(u,v)$ we mean the function specified by such
inverse diffeomorphism.

\begin{Def}\label{intdeltastrip}  Let $\epsilon >0$,
$J=(-4\epsilon,4\epsilon)$.  A $C^2$ surface $S\subset \HH$ is an \emph{intrinsic
graphical strip} on $J$ if there exist functions $F,G, \sigma \in
C^2(J)$ satisfying $(F')^2 \leq 2 \sigma' G'$ such that, if
\[\Omega= \Psi(\R\times J) = \{(u,v) | u \in \R, v=G(s) \frac{u^2}{2} + F(s) u + \sigma(s) \text{\; \;for
$s \in J$}\},\] then with $\phi\in C^2(\Omega)$ defined by
\[
\phi(u,v) = F(s(u,v)) +  u G(s(u,v)),
\]
we have
\[S = \{(0,u,v)\circ  (\phi(u,v),0,0)  | (u,v) \in \Omega\} = \{(\phi(u,v),u,v-\frac{u}{2} \phi(u,v))\mid (u,v)\in \Om\}.\]
We say that $S$ is a \emph{strict intrinsic graphical strip} on $J$
if $F, G, \sigma$ satisfy the strict inequality \eqref{nd}, and if
the map $\Psi:\R\times J \to \Om$ is globally one-to-one, hence a
diffeomorphism of $\R\times J$ onto $\Psi(\R\times J) = \Om$.
\end{Def}

\begin{Rem}\label{R:H-m}
A strict intrinsic graphical strip is necessarily a minimal surface.
To see this, we observe that the function $\phi$ in the above
definition satisfies \eqref{mseig}. The reader will find most of the
computations to achieve this in the proof of Corollary
\ref{T:varstrip}, see formula \eqref{sigmaderivs} below, and the
computations following that formula.
\end{Rem}

\begin{Rem}\label{R:G'>0}
In the case of a strict intrinsic graphical strip, without loss of
generality we can assume that $G'(s) > 0$ on $J$ (this property is
needed in the proof of Lemmas \ref{RHS}, \ref{LHS} and Theorem
\ref{I:unstable}).  We can justify this claim as follows. As
observed earlier, the condition \eqref{nd} implies $\sigma'(s)G'(s)
> 0$. Since $\sigma'(s)$ does not change sign, if $\sigma'(s) > 0$,
this forces $G'(s) > 0$. If instead we have $G'(s) < 0$ on $J$, we
replace $F,G,\sigma$ by $\tilde F(s) = F(-s)$, $\tilde G(s) =
G(-s)$, $ \tilde \sigma(s) = \sigma(-s)$.  The newly defined
functions satisfy \eqref{nd}. We also have $\tilde G'(s) > 0$.  Now
we take $\phi(u,v) = \tilde F(-s(u,v)) + u \tilde G(-s(u,v))$.  We
see that the surface parameterized by this new $\phi$ has the same
trace as the one with the original $\phi$.
\end{Rem}
\noindent

\begin{Rem} We emphasize here that any vertical plane such as \eqref{vp0} is an
intrinsic graphical strip, but not a strict intrinsic graphical
strip. One has in fact if $a\not= 0$, that $\phi(u,v) =
\frac{\gamma}{a} - \frac{b}{a} u$, so that $F(s) \equiv
\frac{\gamma}{a}$, $G(s) = - \frac{b}{a}$, $\sigma(s) \equiv 0$.
Therefore, $2 \sigma' G' - (F')^2 \equiv 0$.
\end{Rem}

Notice that, as a consequence of the smoothness hypothesis on $F,
G$, an intrinsic graphical strip is a surface of class $C^2$.
Definition \ref{intdeltastrip} takes advantage of the coordinates
introduced in \cite{BSV} discussed above, and the motivation behind
it will be explained in Section \ref{S:strips}. With this definition
in place and the second variation formula written in terms of
intrinsic graphical strips, in Section \ref{S:explicit} we use
techniques from \cite{DGN:stable} (and modifications from
\cite{DGNP}) to construct a variation on an intrinsic graphical
strip which decreases the horizontal perimeter, proving the
following basic result.

\begin{MThm}\label{I:unstable}
Let $S$ be a strict intrinsic graphical strip as in Definition
\ref{intdeltastrip}.
There exists a $\psi\in C^2_0(S)$ such that
\[
\mathcal V^H_{II}(S,\psi X_1) < 0.
\]
As a consequence, $S$ is unstable.
\end{MThm}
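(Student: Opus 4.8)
The plan is to exhibit an explicit test function $\psi \in C^2_0(S)$ along which the second variation $\mathcal V^H_{II}(S,\psi X_1)$ is strictly negative, thereby contradicting the defining inequality for stability. Since the surface $S$ is a strict intrinsic graphical strip, it is parametrized by the coordinates $(u,s)$ via the map $\Psi$, and by Remark \ref{R:G'>0} I may assume $G'(s)>0$ on $J=(-4\eps,4\eps)$. The natural first step is to rewrite the second variation functional $\mathcal V^H_{II}(S,\psi X_1)$ in these intrinsic coordinates, so that it becomes a concrete quadratic integral over $\R\times J$ in $\psi$ and its horizontal derivatives. I expect this functional, after using that $\phi$ solves the double Burger equation \eqref{mseig} (Remark \ref{R:H-m}), to reduce to something of the Hardy/Carleman--Wirtinger type seen in \cite{DGN:stable} and \cite{DGNP}: a one-dimensional weighted integral in which the key competition is between a ``gradient'' term and a potential term weighted by the Jacobian \eqref{jac}.

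The central idea, following the strategy of \cite{DGN:stable} and \cite{DGNP}, is a \emph{separation of variables}. I would look for $\psi$ of the product form $\psi(u,s) = a(u)\, b(s)$, where $b(s)$ is supported in a subinterval where $G'>0$ (so the degeneracy built into condition \eqref{nd} is exploited), and $a(u)$ is a cutoff designed to control the behavior in the noncompact $u$-direction. Plugging such a $\psi$ into the quadratic functional decouples it into an $s$-integral and a $u$-integral; the objective is then to choose $b(s)$ so that the $s$-part produces a strictly negative contribution that the $u$-cutoff cannot wash out. Because the strip is infinite in the $u$-direction, the delicate point is to ensure $\psi$ has compact support while keeping the competing positive ``energy'' contributions arbitrarily small — a standard logarithmic-type cutoff $a(u)$ should make the gradient term in $u$ negligible in the limit.

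Concretely, after the reduction I would carry out the following steps. First, compute the coefficients of the reduced quadratic form explicitly in terms of $F,G,\sigma$ and the Jacobian $J_\Psi = G'u^2/2 + F'u + \sigma'$, using the strict inequality \eqref{nd} to guarantee $J_\Psi>0$ throughout. Second, insert the separated ansatz $\psi=a(u)b(s)$ and integrate out, reducing the negativity of $\mathcal V^H_{II}$ to a one-dimensional inequality in $b$ that mirrors the Carleman--Wirtinger inequality of \cite{DGN:stable}. Third, choose $b$ (for instance a fixed smooth bump, or an eigenfunction-type profile of the one-dimensional operator) that makes the $s$-contribution strictly negative, and take a sequence of cutoffs $a_k(u)$ tending to $1$ on compacta while having vanishing Dirichlet energy in the limit. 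Fourth, pass to the limit to obtain $\mathcal V^H_{II}(S,\psi_k X_1)<0$ for $k$ large. The main obstacle I anticipate is precisely the last two steps: one must verify that the positive terms coming from the $u$-direction genuinely vanish faster than the negative $s$-contribution in the limit, and that the chosen $b$ yields a strictly negative leading constant robustly (independent of the cutoff). This is where the strict inequality \eqref{nd}, rather than the mere $(F')^2\le 2\sigma'G'$ allowed for non-strict strips, should be essential — it provides the quantitative gap that keeps the negative term bounded away from zero.
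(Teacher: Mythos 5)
Your first step (rewriting $\mathcal V^H_{II}(S,\psi X_1)$ as a quadratic functional over $\R\times J$) is exactly the paper's Corollary \ref{T:varstrip}, and the framing as a Hardy-type competition is right. But the heart of your construction fails at the step ``take a sequence of cutoffs $a_k(u)$ tending to $1$ on compacta while having vanishing Dirichlet energy in the limit.'' In the reduced functional \eqref{T:strip} the gradient term is weighted by the Jacobian $Q(u,s)=G'(s)\frac{u^2}{2}+F'(s)u+\sigma'(s)$, which grows quadratically in $u$ (recall $G'>0$). For any compactly supported $a$ and any $\rho>0$, Cauchy--Schwarz gives
\[
\int_\rho^\infty a'(u)^2\,Q(u,s)\,du\ \ge\ a(\rho)^2\Bigl(\int_\rho^\infty\frac{du}{Q(u,s)}\Bigr)^{-1},
\]
and since $\int_\rho^\infty Q(u,s)^{-1}du\approx 2/(G'(s)\rho)\to 0$, the right-hand side grows linearly in $\rho$. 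Hence any sequence of compactly supported cutoffs tending to $1$ on compacta has \emph{weighted energy tending to $+\infty$}, not to $0$: with weight $u^2$ the real line behaves like the radial part of $\R^3$, where compacta have positive capacity, so logarithmic cutoffs do not help. Meanwhile the potential term is bounded for each $s$, since
\[
\bigl(2\sigma'(s)G'(s)-F'(s)^2\bigr)\int_\R \frac{du}{Q(u,s)}\ =\ 2\pi\,\bigl(2\sigma'(s)G'(s)-F'(s)^2\bigr)^{1/2}.
\]
So along your test functions $\mathcal V^H_{II}$ is eventually positive (in fact divergent), no matter how $b(s)$ is chosen; no separated ansatz of ``plateau cutoff'' type can work.

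What is actually needed is not to make the positive term negligible --- it never is --- but to beat it by a constant factor, and this requires the test function itself to decay in $u$ at the precise rate $Q^{-1/2}$. This is the paper's choice: $\psi_k(u,s)=\chi(s)\chi_k(u)\,\bigl(G_k'(s)\frac{u^2}{2}+F_k'(s)u+\sigma_k'(s)\bigr)^{-1/2}$, where $\chi_k(u)=\chi(u/k)$ (so $|\chi_k'|\le C/k$, and the cutoff error terms genuinely vanish because the factor $Q_k^{-1}$ inside the square cancels the quadratic weight), and $F_k,G_k,\sigma_k$ are mollifications of $F,G,\sigma$, needed so that $\psi_k$ is smooth enough. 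With this profile, Lemmas \ref{LHS} and \ref{RHS} show that the gradient and potential terms converge to the \emph{same} one-dimensional integral
\[
\int_J\frac{\chi(s)^2}{(1+G(s)^2)^{3/2}}\,\frac{G'(s)}{(2\sigma'(s)G'(s)-F'(s)^2)^{1/2}}\,ds,
\]
with constants $\tfrac{\pi}{2}$ and $-2\pi$ respectively, so the limit equals $\bigl(\tfrac{\pi}{2}-2\pi\bigr)\int_J(\cdots)\,ds<0$. This is a sharp-constant comparison: after an affine-in-$u$ substitution turning $Q$ into a multiple of $1+w^2$, the profile $Q^{-1/2}$ becomes the Dirichlet ground state $\cos\theta$ on an interval of length $\pi$, with Rayleigh quotient $\tfrac14<1$. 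Your separation idea can be salvaged only by building this profile into $a$ --- e.g.\ fix $s_0\in J$, take $a(u)$ proportional to a truncation of $\bigl(G'(s_0)\frac{u^2}{2}+F'(s_0)u+\sigma'(s_0)\bigr)^{-1/2}$ and $b$ supported near $s_0$ --- but that is precisely imitating the paper's $Q^{-1/2}$, which is the idea missing from your proposal.
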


The relevance of Theorem \ref{I:unstable} is in the following
theorem, which we prove in Section \ref{S:strips}.

\begin{MThm}\label{T:existstrip0} Every $C^2$ complete noncompact embedded
minimal surface without boundary with empty characteristic locus and
which is not itself a vertical plane contains a strict intrinsic
graphical strip.
\end{MThm}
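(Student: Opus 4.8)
The plan is to start from a $C^2$ complete noncompact embedded minimal surface $S$ with empty characteristic locus which is not a vertical plane, and to produce the required functions $F,G,\sigma\in C^2(J)$ satisfying the strict inequality \eqref{nd} together with a globally injective $\Psi$, thereby exhibiting a strict intrinsic graphical strip inside $S$. Since the characteristic locus is empty, $X_1$ is everywhere transversal to $S$, and one expects to realize $S$ locally as an intrinsic $X_1$-graph in the sense of Definition \ref{D:intgraph}, with defining function $\phi\in C^2$ solving the double Burger equation \eqref{mseig}, $\mathcal B_\phi(\mathcal B_\phi(\phi))=0$. The first step is therefore to pass from the surface to this PDE description: using that $S$ is noncharacteristic, set up intrinsic coordinates $(u,v)$ and verify that minimality is equivalent to \eqref{mseig}, so that $w:=\mathcal B_\phi(\phi)$ satisfies $\mathcal B_\phi(w)=0$, i.e. $w$ is constant along the characteristic curves of the vector field $\partial_u + \phi\,\partial_v$.

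The second step is to integrate the characteristic system for \eqref{mseig}, following the coordinate change from \cite{BSV}. Along a characteristic curve parametrized by $u$, both $w=\mathcal B_\phi(\phi)$ and the slope are controlled: since $\mathcal B_\phi(w)=0$, $w$ is transported, and $\phi$ evolves linearly along the characteristic, forcing $\phi$ to be affine in $u$ along each curve. Introducing a transverse parameter $s$ labelling the characteristics, this is exactly what yields the representation $\phi(u,v)=F(s)+u\,G(s)$ and $v=G(s)\tfrac{u^2}{2}+F(s)u+\sigma(s)$ from \eqref{if}, with $F,G,\sigma$ the Cauchy data read off along a transversal curve $s\mapsto$ (initial point). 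In other words, solving the Burger characteristics reconstructs precisely the three functions appearing in Definition \ref{intdeltastrip}, and the Jacobian identity \eqref{jac} shows that the nondegeneracy of this coordinate change is governed by the quantity $G'\tfrac{u^2}{2}+F'u+\sigma'$.

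The third step is to locate a subinterval $J=(-4\epsilon,4\epsilon)$ of the $s$-parameter on which the strict inequality \eqref{nd}, $F'(s)^2<2\sigma'(s)G'(s)$, holds. Here is where the hypotheses that $S$ is \emph{not} a vertical plane and is complete/noncompact must be used. If $F',G',\sigma'$ were to satisfy $(F')^2\equiv 2\sigma'G'$ identically (the degenerate case), the Jacobian quadratic in $u$ has a double root and one expects $\phi$ to be affine, forcing $S$ to be a vertical plane, contrary to assumption; so the strict inequality must hold somewhere. By continuity of the first derivatives it then holds on an open interval, and after shrinking, on a symmetric $J=(-4\epsilon,4\epsilon)$. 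Completeness and noncompactness of $S$ guarantee that the characteristic strip extends over the full range $u\in\R$ (the characteristics do not run off the surface or terminate), which is what allows $\Omega=\Psi(\R\times J)$ to be the full image rather than a truncated one, and supplies the room needed to invoke the remark following \eqref{jac} that, after possibly shrinking $J$, the map $\Psi$ is globally one-to-one.

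The main obstacle I expect is precisely this globalization and injectivity of $\Psi$ over the \emph{entire} line $u\in\R$: the local-to-global passage from the PDE/characteristic analysis to an honest embedded strict intrinsic graphical strip. One must rule out that distinct characteristics cross (which would violate injectivity and contradict embeddedness), and one must ensure the characteristics are defined for all $u\in\R$; both hinge on combining the strict sign condition \eqref{nd}, the embeddedness of $S$, and completeness. A secondary delicate point is verifying that the degenerate identity $(F')^2\equiv 2\sigma'G'$ genuinely forces a vertical plane, so that ``not a vertical plane'' really yields strictness on an interval; this requires excluding other affine-in-$u$ configurations and matching them against the classification of the defining function $\phi$. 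Once strictness on $J$ and global injectivity of $\Psi$ are secured, Definition \ref{intdeltastrip} is satisfied verbatim and the proof concludes.
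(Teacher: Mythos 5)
Your overall strategy --- integrating the characteristics of the Burger operator to produce $F,G,\sigma$ and the coordinates \eqref{if} --- is essentially the PDE counterpart of what the paper does geometrically (those characteristics are exactly the projections of the horizontal rulings), but there is a genuine gap at the step where you extract the strict inequality \eqref{nd}. You derive strictness from the hypothesis that $S$ is \emph{not a vertical plane}, via the claim that the degenerate identity $F'(s)^2 \equiv 2\sigma'(s)G'(s)$ forces $\phi$ to be affine and $S$ to be a vertical plane. That claim is false. Take $F(s)=s$, $G(s)=s$, $\sigma(s)=s/2$: then $F'^2\equiv 2\sigma'G'\equiv 1$, the map \eqref{if} gives $v=s(1+u)^2/2$, and the resulting $\phi(u,v)=2v/(1+u)$ solves \eqref{mseig} but is not affine; the surface it parametrizes is a piece of the \emph{non-vertical} plane $x=2t$, which is minimal, complete, embedded, without boundary and not a vertical plane --- its only flaw is a single characteristic point at $(0,-1,0)$, sitting precisely at the double root $u=-1$ of the Jacobian quadratic \eqref{jac}. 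This example reveals the correct dichotomy: a real root of $G'\frac{u^2}{2}+F'u+\sigma'$ corresponds to a \emph{characteristic point} on the full ruling line, not to planarity. Hence strictness must be extracted from the empty characteristic locus together with completeness (which forces the entire ruling lines to lie in $S$), and your argument never invokes the characteristic locus at this step. This is exactly how the paper proceeds: Corollary \ref{foliation} shows the complete noncharacteristic surface is foliated by entire horizontal lines; Theorem \ref{T:thma} shows that absence of characteristic points along each full line is equivalent to $1-2W_0(s)\kappa(s)<0$, i.e.\ \eqref{cl}; and Lemma \ref{lemma1} establishes the identity $F'^2-2\sigma'G' = 1-2W_0\kappa$ (using $|\gamma'|\equiv 1$), which yields \eqref{nd} pointwise on all of $J$, with no ``somewhere plus continuity'' step. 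The hypothesis that $S$ is not a vertical plane plays a different and prior role: via Lemma \ref{L:lemma2} it guarantees a point near which $S$ is a graph over the plane $t=0$, which is what permits the seed-curve parametrization \eqref{seedrep} to be set up at all. In short, you have swapped the roles of the two hypotheses, and the swap is not benign.

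A secondary shortfall is that nearly all of the paper's actual work lies in steps you assume or postpone: the local foliation by horizontal lines (Lemmas \ref{L:ruled} and \ref{locfol}, the latter requiring a delicate case analysis where $\phi_t$ vanishes), its globalization to full lines via completeness (the open--closed argument of Corollary \ref{foliation}), and the $C^3$ regularity of the seed curve needed so that $F,G,\sigma\in C^2(J)$. On the injectivity of $\Psi$ you also overcomplicate: once \eqref{nd} holds on $J$ (and $\gamma_1'\neq 0$ there), injectivity follows from the elementary observation in Lemma \ref{lemma1} that $\partial v/\partial s = \sigma'+F'u+G'\frac{u^2}{2}$ never vanishes, so $v(u,\cdot)$ is strictly monotone for each fixed $u$; neither embeddedness nor completeness is needed for that point, contrary to what you suggest.
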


Our proof of Theorem \ref{T:existstrip0} hinges on a close analysis
of the representation results of \cite{GP}. Theorems
\ref{I:unstable} and \ref{T:existstrip0} are the main novel
technical points of the present paper. From them, the following
theorem of Bernstein type will follow.

\begin{MThm}[of Bernstein type]\label{I:main}  The vertical planes are the only stable $C^2$ complete embedded
minimal surfaces in $\HH$ without boundary  and with empty
characteristic locus.
\end{MThm}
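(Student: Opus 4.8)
The plan is to deduce Theorem~\ref{I:main} directly from the two main technical theorems already established, namely Theorem~\ref{I:unstable} and Theorem~\ref{T:existstrip0}, by a short logical dichotomy. The structure of the argument is a clean contrapositive: I will show that any $C^2$ complete embedded minimal surface without boundary and with empty characteristic locus which is \emph{not} a vertical plane must be unstable, from which it follows that the only such \emph{stable} surfaces are vertical planes.

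First I would dispose of the trivial inclusion: every vertical plane $\Pi_\gamma$ as in \eqref{vp0} is a $C^2$ complete embedded minimal surface without boundary and with empty characteristic locus, and it is stable. Minimality and the emptiness of the characteristic locus for $\Pi_\gamma$ are standard (a vertical plane has constant horizontal Gauss map, hence vanishing horizontal mean curvature and no points where $X_1,X_2$ become tangent). Stability of vertical planes is the content of the positive direction already present in the literature feeding into Theorem~\ref{T:bern1}; alternatively, the Remark preceding this statement notes that a vertical plane is an intrinsic graphical strip but \emph{not} a strict one, since $2\sigma'G' - (F')^2 \equiv 0$, so the instability mechanism of Theorem~\ref{I:unstable} does not apply and no perimeter-decreasing variation is produced. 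Thus vertical planes belong to the claimed class.

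For the converse, let $S$ be a $C^2$ complete embedded minimal surface in $\HH$ without boundary and with empty characteristic locus, and suppose $S$ is \emph{not} a vertical plane. A complete surface without boundary is in particular noncompact in this setting (a compact surface without boundary would have to close up, which is incompatible with minimality and empty characteristic locus in $\HH$; more directly, the surfaces under consideration are entire in nature). Hence $S$ satisfies the hypotheses of Theorem~\ref{T:existstrip0}, so $S$ contains a strict intrinsic graphical strip $S_0$ in the sense of Definition~\ref{intdeltastrip}. By Theorem~\ref{I:unstable} applied to $S_0$, there exists $\psi \in C^2_0(S_0)$ with $\mathcal V^H_{II}(S_0,\psi X_1) < 0$. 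Extending $\psi$ by zero outside $S_0$ yields an admissible compactly supported variation of $S$ with strictly negative second variation of the horizontal perimeter, so $S$ is unstable. This contradicts stability, completing the contrapositive.

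The substance of the theorem is entirely carried by Theorems~\ref{I:unstable} and \ref{T:existstrip0}, so the present proof is essentially a two-line deduction; the only point requiring a word of care is the passage from the local instability of the embedded strip $S_0$ to the instability of the ambient surface $S$. The main obstacle, therefore, is not in this final argument but in justifying that a compactly supported test function on the strip extends to an admissible variation of $S$ whose second variation decreases: one must check that the support of $\psi$ stays within the noncharacteristic locus and that the second variation functional localizes, i.e.\ that $\mathcal V^H_{II}(S,\psi X_1)=\mathcal V^H_{II}(S_0,\psi X_1)$ because the integrand is supported in $S_0$. Granting this localization, which is exactly the content encapsulated in the phrase ``any minimal surface containing a strict graphical strip is unstable'' in the analogous Theorem~\ref{T:DGNP1}, the conclusion follows immediately.
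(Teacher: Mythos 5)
Your proposal is correct and follows essentially the same route as the paper: the paper's proof is precisely the two-line contrapositive deduction combining Theorem~\ref{T:existstrip0} (a non-plane contains a strict intrinsic graphical strip) with Theorem~\ref{I:unstable} (such a strip, and hence the ambient surface, is unstable). The additional points you flag --- stability of vertical planes themselves, the automatic noncompactness of a boundaryless noncharacteristic surface, and the localization of $\mathcal V^H_{II}$ under extension of $\psi$ by zero --- are all glossed over in the paper's proof, and your treatment of them is sound.
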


We note that Theorem \ref{I:main} is not contained in either of the
cited  Theorems \ref{T:bern1} or \ref{T:bern2}. For instance the
sub-Riemannian \emph{catenoids} in $\HH$ (the reader should note
that these surfaces are just the classical hyperboloids of
revolution)

\begin{equation}\label{cat0}
(t-a)^2\ =\ \frac{4}{b^2}\left(\frac{b}{4}(x^2+y^2)-1\right),\ \ \ \
a,b \in \R, b > 0,
\end{equation}
are complete embedded minimal surfaces with empty characteristic
locus which are not graphs on any plane, nor they are entire intrinsic
graphs. Theorem \ref{I:main} shows that such minimal surfaces are
unstable. These surfaces are a model of special interest. For this
reason, and also for making transparent to the reader our more
general constructions, we discuss them in detail in section
\ref{SS:catenoid}.

In closing, we note that the representation results of this paper
require that the surface be $C^2$:  the complete regularity theory
of minimal surfaces is currently an open problem which is being very
actively investigated.

This work was presented by the last named author at the ICM
Satellite Conference "Geometric Analysis and PDE's" in Naples,
Italy, September 2006, and by the third named author at the
Conference on Geometric Analysis and Applications, Univ. Illinois,
Urbana Champaign, July 2006. After its completion we were informed
of the preprint \cite{HRR} which addresses questions related to
those in this paper.

\section{\textbf{Definitions}}\label{S:def}

In this section we recall some definitions and known results which
will be needed in the paper. We recall that the Heisenberg group
$\Hn$ is the graded, nilpotent Lie group of step $2$ with underlying
manifold is $\mathbb C^{n}\times \R \cong \R^{2n+1}$, whose points
we indicate $g = (x,y,t)$, $g'=(x',y',t')$, etc., with non-Abelian
left-translation
\begin{equation}\label{Hn}
L_{g} (g') = g \circ g'\ =\ \left(x + x', y + y',t + t' +
\frac{1}{2} (x\cdot y' - x'\cdot y)\right),
\end{equation}
and non-isotropic dilations
\begin{equation}\label{Hn3}
\delta_\la (g) = (\la x, \la y, \la^2 t),\quad\quad\quad \la > 0.
\end{equation}

Here, and throughout the paper, we will use $v \cdot w$ to denote
the standard Euclidean inner product of two vectors $v$ and $w$ in
$\Rn$. The dilations \eqref{Hn3} provide a natural scaling
associated with the grading of the Heisenberg algebra $\mathfrak h_n
= V_1\oplus V_2$, where $V_1 = \mathbb R^{2n} \times \{0\}$, $V_2 =
\{0\}\times \R$. According to such scaling, elements of the
horizontal layer $V_1$ have degree one, whereas elements of the
vertical layer $V_2$ are assigned the degree two. The homogeneous
dimension associated with \eqref{Hn3} is $Q = 2n + 2$. We recall
that, identifying $\mathfrak h_n$ with $\mathbb R^{2n+1}$, we have
for the bracket
\[
[g,g']\ =\ (0,0, x\cdot y'- x'\cdot y).
\]
It is then clear that $[V_1,V_1] = V_2$, and that $V_2$ is the group
center.

Henceforth, we will focus on the first Heisenberg group $\HH$.
Applying the differential $(L_g)_*$ of \eqref{Hn} to the standard basis $\{\p_x,\p_y,\p_t\}$ of $\R^3$, we obtain
the three distinguished vector fields
\[X_1\ =\ (L_g)_*(\p_x)\ =\ \partial_x -\frac{y}{2} \partial_t\ , \;\; X_2\ =\ (L_g)_*(\p_y)\ =\ \partial_y
+\frac{x}{2} \partial_t\ ,\;\;T\ =\ (L_g)_*(\p_t)\ =\ \partial_t\ .\]
The horizontal bundle $H\HH$ is the subbundle of $T\HH$ whose fiber at a point $g\in \HH$ is given by
\[
H_g\ =\  span\{X_1(g),X_2(g)\} \ .
\]

We endow $\HH$ with a left-invariant Riemannian metric $\{g_{ij}\}$,
whose inner product we will denote by $<\cdot,\cdot>$, with respect
to which $\{X_1,X_2,T\}$ constitute an orthonormal basis. If
$S\subset \HH$ is a $C^2$ oriented surface we will indicate with
$\bN$ a (non-unit) Riemannian normal with respect to
$<\cdot,\cdot>$, and with $\n = \bN/|\bN|$ the corresponding Gauss
map. We will let
\begin{equation}\label{pqw}
p = <\bN,X_1>,\ \ \ q = <\bN,X_2>,\ \ \ W = \sqrt{p^2 + q^2},\ \ \
\omega\ =\ <\bN,T>.
\end{equation}

The characteristic locus of $S$ is the closed subset of $S$ defined by
\[\Sigma(S) = \{g \in S|W(g)=0\}.\]

We notice explicitly that $\Sigma(S) = \{g\in S\mid T_gS = H_g\}$. We also set on $S\setminus \Sigma(S)$
\begin{equation}\label{bars}
\pb = \frac{p}{W},\ \ \ \ \qb = \frac{q}{W},\ \ \ \ \ob =
\frac{\omega}{W}.
\end{equation}

\begin{Def}  Let $S\subset \HH$ be a $C^2$ oriented surface. A horizontal normal of $S$ is defined as
\[
\bN^H\ =\ p\ X_1\ +\ q\ X_2,\] whereas on $S\setminus \Sigma(S)$ the
horizontal Gauss map is defined as
\[\nuX = \frac{1}{W} \bN^H\ =\ \pb X_1 + \qb X_2\ .\]
\end{Def}

The horizontal perimeter measure of $S$ has the following form.

\begin{Pro}\label{P:Hper} Let $S\subset \HH$ be a $C^2$ oriented surface, then the horizontal perimeter of
  $S$ is
\[\mathscr{P}_H(S)=\int_S \sqrt{<\n,X_1>^2 + <\n,X_2>^2}\ d\sigma = \int_S \frac{W}{|\bN|} d\sigma,\]
where $d\sigma$ is the Riemannian surface area element associated to
$\ip$.
\end{Pro}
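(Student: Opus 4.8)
The plan is to start from the De Giorgi--type definition of the horizontal perimeter used in \cite{GarNh}, namely, writing $S = \partial E$ for the region $E$ that the orientation of $S$ bounds,
\[
\mathscr{P}_H(S) = \sup\left\{\int_E \operatorname{div}\nolimits_H \varphi\, dV : \varphi = \varphi_1 X_1 + \varphi_2 X_2 \in C^1_0(\HH; H\HH),\ \varphi_1^2 + \varphi_2^2 \leq 1\right\},
\]
where $\operatorname{div}_H \varphi = X_1\varphi_1 + X_2\varphi_2$ and $dV$ is Riemannian volume, and to reduce the volume integral to a surface integral via the Riemannian divergence theorem. The crucial preliminary observation is that, for the left-invariant metric making $\{X_1,X_2,T\}$ orthonormal, each frame field is divergence free. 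Indeed, from the brackets $[X_1,X_2]=T$, $[X_1,T]=[X_2,T]=0$, Koszul's formula yields $\nabla_{X_1}X_1=\nabla_{X_2}X_2=\nabla_T T=0$ together with $\nabla_{X_1}X_2=\tfrac12 T$, $\nabla_{X_2}X_1=-\tfrac12 T$, $\nabla_{X_1}T=\nabla_T X_1=-\tfrac12 X_2$, and similarly for the remaining pairs; hence $\operatorname{div}(X_i)=\sum_j <\nabla_{X_j}X_i,X_j>\,=0$ for $i=1,2$. Consequently, for a horizontal field the horizontal divergence $\operatorname{div}_H\varphi$ coincides with the full Riemannian divergence $\operatorname{div}\varphi$. (I would also note in passing that the change of frame from $\{\partial_x,\partial_y,\partial_t\}$ to $\{X_1,X_2,T\}$ has unit Jacobian, so $dV = dx\,dy\,dt$, matching the Lebesgue measure in the definition.)

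With this identification in hand, I would apply the divergence theorem on $E$:
\[
\int_E \operatorname{div}\nolimits_H\varphi\, dV = \int_E \operatorname{div}\varphi\, dV = \int_S <\varphi, \n>\, d\sigma = \int_S \frac{\varphi_1\, p + \varphi_2\, q}{|\bN|}\, d\sigma,
\]
where I have used $\n = \bN/|\bN|$ together with $p = <\bN, X_1>$, $q = <\bN, X_2>$, so that $<X_i,\n> = <\bN,X_i>/|\bN|$; the choice of outer versus inner normal only flips a sign, which is immaterial because of the supremum. Optimizing pointwise on $S$, the Cauchy--Schwarz inequality gives $\varphi_1 p + \varphi_2 q \leq \sqrt{p^2+q^2} = W$ under the constraint $\varphi_1^2+\varphi_2^2\le 1$, with equality for $(\varphi_1,\varphi_2)=(p,q)/W$ at points where $W>0$. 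A standard approximation argument---truncating and mollifying the candidate direction field $(p,q)/W$ into admissible compactly supported fields, exploiting that $S$ is of class $C^2$---shows that the supremum is attained in the limit and equals the integral of the pointwise maximum, giving
\[
\mathscr{P}_H(S) = \int_S \frac{W}{|\bN|}\, d\sigma.
\]

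The first equality in the statement is then purely algebraic: since $<\n, X_i> = <\bN, X_i>/|\bN|$, one has $<\n, X_1>^2 + <\n, X_2>^2 = (p^2+q^2)/|\bN|^2 = W^2/|\bN|^2$, whence $\sqrt{<\n, X_1>^2 + <\n, X_2>^2} = W/|\bN|$ and the two displayed integrals coincide. I expect the main obstacle to be the optimization step: one must verify carefully that the pointwise Cauchy--Schwarz bound is genuinely achieved by admissible test fields in the supremum defining $\mathscr{P}_H$, which entails controlling the characteristic locus $\Sigma(S)=\{W=0\}$, where the optimal direction $(p,q)/W$ degenerates, and justifying the interchange of supremum and integral through localization and mollification. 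Everything else reduces to the divergence computation for the orthonormal frame and a direct application of the divergence theorem.
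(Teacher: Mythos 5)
The paper offers no proof of this proposition at all: it appears in Section 2 among "definitions and known results which will be needed in the paper," and is recalled from the literature (the representation of the horizontal perimeter of a $C^2$ hypersurface goes back to \cite{GarNh}, \cite{FSSC2}, \cite{DGN:minimal}). So there is no internal argument to compare yours against; the relevant benchmark is the standard proof in those references, and your proposal reproduces it faithfully: the De Giorgi--type supremum definition, the observation that $X_1,X_2$ are divergence free for the left-invariant metric (equivalently, that their Euclidean divergences vanish and $dV=dx\,dy\,dt$), the Riemannian divergence theorem to pass from $\int_E \operatorname{div}_H\varphi\,dV$ to $\int_S \langle\varphi,\n\rangle\,d\sigma$, and the pointwise Cauchy--Schwarz optimization with equality at $(\pb,\qb)$ away from $\Sigma(S)$. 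You also correctly flag the only genuinely technical point, namely that the characteristic locus must be cut away when building admissible test fields from $(p,q)/W$; since $W$ vanishes there, $\Sigma(S)$ contributes nothing to $\int_S W/|\bN|\,d\sigma$, so truncation and mollification lose nothing in the limit. Two small caveats you should make explicit if you write this up in full: a general oriented $C^2$ surface need not globally bound a region, so the argument should be localized (perimeter is a measure, so it suffices to verify the identity on small pieces where $S$ is a graph bounding a subgraph region); and mollification must be checked to preserve the constraint $\varphi_1^2+\varphi_2^2\le 1$, which is standard by convexity. With those remarks, your argument is complete and is exactly the proof the paper implicitly appeals to.
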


To investigate minimal surfaces, we recall the notion of horizontal
mean curvature $\mathcal H$ introduced in \cite{DGN:minimal},
\cite{Pauls:minimal}, \cite{GP}. Such notion is obtained by
projecting the horizontal Levi-Civita connection onto the so-called
horizontal tangent bundle $HTS = TS \cap H\HH$. If we assume, as we
may, that the Riemannian normal field on $S$, $\bN^H$, can be
extended to a neighborhood of $S$, and continuing to denote by $\pb,
\qb$ the quantities introduced in \eqref{bars} relative to such
extension, then it has been shown in the above cited references that
$\mathcal H$ can be computed by the following proposition.

\begin{Pro}\label{P:mc}
For $g \in S \minus \Sigma(S)$, the $H$-mean curvature of $S$ at $g$ is given by
\[\mathcal H(g)=X_1 \pb(g) + X_2\qb(g)\ .\]
\end{Pro}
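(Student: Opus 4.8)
The plan is to unwind the definition of the horizontal mean curvature into a concrete covariant-derivative expression and then evaluate it in the orthonormal frame $\{X_1,X_2,T\}$. At a point $g\in S\minus\Sigma(S)$ the horizontal tangent space $T_gS\cap H_g$ is one-dimensional, spanned by the unit horizontal vector $\nup = -\qb\,X_1 + \pb\,X_2$, which is orthogonal within $H_g$ to the horizontal Gauss map $\nuX = \pb\,X_1 + \qb\,X_2$ and therefore tangent to $S$. Projecting the Levi-Civita connection of the left-invariant metric onto the horizontal tangent bundle and tracing over this one-dimensional space, the mean curvature takes the form
\[
\mathcal H \; = \; \langle \nabla_{\nup}\nuX,\ \nup\rangle ,
\]
where $\pb,\qb$ are understood via the chosen $C^2$ extension of $\bN^H$ to a neighborhood of $S$. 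The first step is to record the connection coefficients in the frame $\{X_1,X_2,T\}$.

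Since this frame is orthonormal with the single nontrivial bracket $[X_1,X_2]=T$, the Koszul formula (all frame-metric coefficients being constant) gives $\nabla_{X_1}X_1 = \nabla_{X_2}X_2 = 0$, together with $\nabla_{X_1}X_2 = \tfrac12 T$ and $\nabla_{X_2}X_1 = -\tfrac12 T$. These are the only coefficients needed, because $\nabla_{\nup}$ expands into the horizontal directions $X_1, X_2$ alone.

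Next I would substitute and expand by the Leibniz rule, writing $\nabla_{\nup}\nuX = -\qb\,\nabla_{X_1}\nuX + \pb\,\nabla_{X_2}\nuX$. Two features make the computation collapse. First, the vertical ($T$) components produced by $\nabla_{X_1}X_2$ and $\nabla_{X_2}X_1$ are orthogonal to $\nup$ and so drop out of the inner product, leaving the purely horizontal quadratic
\[
\langle \nabla_{\nup}\nuX, \nup\rangle \; = \; \qb^2\, X_1\pb \; - \; \pb\qb\, X_2\pb \; - \; \pb\qb\, X_1\qb \; + \; \pb^2\, X_2\qb .
\]
Second, differentiating the identity $\pb^2+\qb^2 = 1$ (which holds automatically for the normalization $\pb = p/W$, $\qb = q/W$) yields $\pb\,X_i\pb + \qb\,X_i\qb = 0$ for $i=1,2$. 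Using these relations to rewrite the two mixed terms converts $-\pb\qb\,X_1\qb$ into $\pb^2 X_1\pb$ and $-\pb\qb\,X_2\pb$ into $\qb^2 X_2\qb$, whence the expression becomes $(\pb^2+\qb^2)(X_1\pb + X_2\qb) = X_1\pb + X_2\qb$, which is the claim.

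The main obstacle is bookkeeping rather than conceptual depth: I must ensure that the algebraic collapse in the final step is driven exactly by the unit-length constraint, and I should record that the outcome is independent of the chosen extension of $\bN^H$ off $S$. This independence is already present on the left-hand side, since $\nabla_{\nup}$ differentiates only along $\nup$, which is tangent to $S$; equivalently, the constraint $\pb^2+\qb^2 = 1$ forces the combination $X_1\pb + X_2\qb$ to coincide with that tangential derivative at every point of $S\minus\Sigma(S)$.
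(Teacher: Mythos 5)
Your computation is correct, but there is nothing in the paper to compare it against: the paper does not prove Proposition \ref{P:mc} at all. It is recalled as a known result, attributed to \cite{DGN:minimal}, \cite{Pauls:minimal}, \cite{GP} (``it has been shown in the above cited references...''), and the paper gives only the definitional sketch, namely that $\mathcal H$ arises by projecting the horizontal Levi-Civita connection onto $HTS = TS\cap H\HH$. Your argument supplies exactly the verification that this sketch calls for, and it is sound at every step: at a noncharacteristic point $HTS$ is the line spanned by the unit horizontal tangent $\nup = -\qb\, X_1 + \pb\, X_2$ (tangency follows from $\langle \nup,\bN\rangle = (-\qb p + \pb q) = 0$), so the trace defining $\mathcal H$ reduces to the single term $\langle \nabla_{\nup}\nuX,\nup\rangle$; the connection coefficients $\nabla_{X_1}X_1 = \nabla_{X_2}X_2 = 0$, $\nabla_{X_1}X_2 = \tfrac12 T = -\nabla_{X_2}X_1$ are the right ones for the left-invariant metric with $[X_1,X_2]=T$; the vertical components are annihilated by pairing with the horizontal vector $\nup$; and the collapse of the resulting quadratic to $X_1\pb + X_2\qb$ via $\pb\, X_i\pb + \qb\, X_i\qb = 0$ is legitimate because the extension of $\bN^H$ prescribed by the paper is normalized as in \eqref{bars}, so $\pb^2+\qb^2=1$ holds in a full neighborhood and may be differentiated. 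Your closing observation, that this identity makes $X_1\pb + X_2\qb$ independent of the choice of extension, is precisely why the paper can state the formula without specifying one. In short, the proposal is a correct, self-contained proof of a statement the paper only cites; what it buys is that self-containedness, at the cost of a frame computation which the paper chose to outsource to the references.
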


For $g \in \Sigma(S)$, we define $\mathcal H(g)= \lim_{g' \ra g, g'
\in S \minus \Sigma(S)} \mathcal H(g')$, whenever the limit exists.
A surface $S$ is said to be \emph{minimal} if its horizontal mean
curvature vanishes identically.

It is now well known (\cite{CHMY,RR,GP,DGN:minimal,Pauls:minimal})
that critical points of the perimeter are characterized by having zero $H$-mean
curvature away from the characteristic locus.  We mention that recent
work of Cheng, Hwang and Yang (\cite{CHY}) and Ritor\'e and Rosales
(\cite{RR2}) have clarified the behavior of such critical points at
the characteristic locus.  However, since we will be restricting to
the category of noncharacteristic surfaces, we will not discuss these
results here.


\section{\textbf{The second variation of the horizontal perimeter and the stability of minimal surfaces}}\label{S:var}

In this section, we recall the first and second variation of the
horizontal perimeter for intrinsic graphs. We mention that formulas
for the first and second variation of the horizontal perimeter have
been derived a number of times in various contexts
(\cite{RR,RR2,CHMY,DGN:minimal,GS,AS,BSV,HP2,HP4,BC}).

Let $S \subset \HH$ be an oriented $C^2$ surface with empty
characteristic locus, and consider vector fields $\mathcal X = a X_1
+ b X_2 + k T$, with $a, b , k\in C^2_0(\mathcal S)$. We define the
\emph{first variation} of the horizontal perimeter with respect to
the deformation of $S$,
\[
S^\lambda\ =\ S + \lambda \mathcal X,
\]
as
\[
\fv\ =\ \frac{d}{d\lambda}~ P_H(S^\lambda)\Bigl|_{\lambda = 0}.
\]
We say that $S$ is \emph{stationary} if $\fv = 0$, for every
$\mathcal X$. We define the \emph{second variation} of the
horizontal perimeter as
\[
\sv\ =\ \frac{d^2}{d\lambda^2}~ P_H(S^\lambda)\Bigl|_{\lambda = 0}.
\]
We say that $S$ is \emph{stable} is $\sv\geq 0$ for every $\mathcal
X$.

Henceforth, to simplify the formulas we introduce the following
notation
\begin{equation}\label{inner} F_{\mathcal X}\ \overset{def}{=}\ \pb a + \qb b +
\ob k\ =\ \frac{<\mathcal X,\bN>}{<\nuX,\bN>}.
\end{equation}

The following result was proved independently by several people in
various contexts,
see\cite{RR,RR2,CHMY,DGN:minimal,GS,AS,BSV,HP2,HP4,BC}.

 \begin{Thm}\label{T:variations}
Let $\mS\subset \HH$ be an oriented $C^2$ surface with empty
characteristic locus, then
\begin{equation}\label{fvH}
\mathcal V^H_I(S;\mathcal X)\ =\
 \int_{S}
\mathcal H\ \Fx\ d\sigma_H.
\end{equation}
In particular, $S$ is stationary if and only if it is minimal.
\end{Thm}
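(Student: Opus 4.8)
The plan is to recognize the horizontal perimeter as a \emph{flux} integral and differentiate it with Cartan's formula. The starting point is the pointwise identity $\frac{W}{|\bN|}=\langle\n,\nuX\rangle$, valid on $S\minus\Sigma(S)$ because the horizontal Gauss map $\nuX$ is exactly the unit horizontal vector maximizing $\langle\n,\cdot\rangle$; by Proposition \ref{P:Hper} this rewrites the perimeter as the flux $\mathscr P_H(S)=\int_S\langle\n,\nuX\rangle\,d\sigma$. First I would extend $\nuX$ from $S$ to a unit horizontal field $V=\pb X_1+\qb X_2$ on a neighborhood of $\operatorname{supp}\mathcal X$, realized as the horizontal Gauss map of a local foliation through $S$ (possible since $\Sigma(S)=\emptyset$, so $W>0$). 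With this extension Proposition \ref{P:mc} gives $\operatorname{div}V=X_1\pb+X_2\qb=\mathcal H$, the Riemannian and Euclidean divergences coinciding because $X_1,X_2$ are divergence-free in $\HH$.

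Second, I would decouple the deformation of the surface from the deformation of its Gauss map by an envelope argument. Writing $\nuX^\la$ for the horizontal Gauss map of $S^\la$, the maximizing property gives the pointwise inequality $\langle\n^\la,\nuX^\la\rangle\ge\langle\n^\la,V\rangle$ for the fixed unit horizontal field $V$, with equality at $\la=0$. Hence $\la\mapsto\int_{S^\la}\langle\n^\la,\nuX^\la-V\rangle\,d\sigma$ is nonnegative, vanishes at $\la=0$, and is smooth, so its derivative there is $0$; consequently $\fv=\frac{d}{d\la}\int_{S^\la}\langle\n^\la,V\rangle\,d\sigma\,\Bl$. In other words, the variation of the Gauss map contributes nothing at first order and we may treat $V$ as frozen.

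Third, I would compute this frozen flux variation via differential forms. Let $\operatorname{vol}$ be the Riemannian volume form and $\eta=\iota_V\operatorname{vol}$ the associated flux $2$-form, so that $\eta|_S=\langle\n,V\rangle\,d\sigma$. Taking $\Phi_\la(g)=g+\la\mathcal X$, only the first-order jet matters, and $\frac{d}{d\la}\int_{S^\la}\eta\,\Bl=\int_S\mathcal L_{\mathcal X}\eta=\int_S d\iota_{\mathcal X}\eta+\int_S\iota_{\mathcal X}d\eta$ by Cartan's formula. The first integral is exact and vanishes by Stokes since $a,b,k\in C^2_0(\mS)$; the second uses $d\eta=(\operatorname{div}V)\operatorname{vol}=\mathcal H\operatorname{vol}$, whence $\iota_{\mathcal X}d\eta|_S=\mathcal H\,\langle\mathcal X,\n\rangle\,d\sigma$. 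This yields $\fv=\int_S\mathcal H\,\langle\mathcal X,\n\rangle\,d\sigma$.

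Finally, the definition \eqref{inner} together with $\langle\nuX,\bN\rangle=W$ gives $\Fx\,d\sigma_H=\frac{\langle\mathcal X,\bN\rangle}{\langle\nuX,\bN\rangle}\,\frac{W}{|\bN|}\,d\sigma=\langle\mathcal X,\n\rangle\,d\sigma$, and substituting proves \eqref{fvH}. The last assertion follows because any prescribed compactly supported $\Fx$ is realized as $\pb a+\qb b+\ob k$ for a suitable $\mathcal X$, so $\fv=0$ for all $\mathcal X$ forces $\mathcal H\equiv0$; that is, stationarity is equivalent to minimality. I expect the delicate point to be the envelope step together with the construction of the foliated extension $V$ for which Proposition \ref{P:mc} applies and $\operatorname{div}V=\mathcal H$ holds throughout $S\cap\operatorname{supp}\mathcal X$; the form computation, by contrast, is routine once $V$ is frozen.
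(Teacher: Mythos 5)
Your argument is correct, but there is nothing in the paper to compare it against: Theorem \ref{T:variations} is stated without proof, quoted as a result ``proved independently by several people in various contexts'' with references to \cite{RR,RR2,CHMY,DGN:minimal,GS,AS,BSV,HP2,HP4,BC}. The proofs in those references --- in particular in \cite{DGN:minimal}, on which this paper leans --- are direct computations: one parametrizes the deformed surface $S^\la$, differentiates the perimeter integrand $W^\la/|\bN^\la|$ in $\la$ under the integral sign, and integrates by parts on $S$. Your route is genuinely different. Writing $\mathscr{P}_H(S^\la)=\int_{S^\la}\langle\n^\la,\nuX^\la\rangle\,d\sigma_\la$ as the flux of the (varying) horizontal Gauss map, your envelope step --- $f(\la)=\int_{S^\la}\langle\n^\la,\nuX^\la-V\rangle\,d\sigma_\la\ge 0$, $f(0)=0$, $f$ differentiable, hence $f'(0)=0$ --- legitimately discards the first-order contribution of the varying Gauss map, and then Cartan's formula together with $\operatorname{div}(\pb\,X_1+\qb\,X_2)=X_1\pb+X_2\qb=\mathcal H$ on $S$ (the $X_i$ are divergence-free for the Riemannian volume, which is Lebesgue measure) gives \eqref{fvH}. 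What the direct computation buys is that it continues to second order and produces Theorem \ref{T:svgeometric}, which this paper actually needs; your calibration-style argument is intrinsically first-order, since the term $f$ you discard is precisely where the second-order information sits ($f''(0)\ge 0$ is no longer negligible). What your argument buys is transparency: it explains structurally why no derivative of $\nuX$ appears in the first variation.

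Two details to tighten. First, the extension $V$ need not come from a foliation; any $C^1$ unit horizontal extension of $\nuX$ works, but you should check that $X_1\pb+X_2\qb$ restricted to $S$ does not depend on the chosen unit extension (differentiate $\pb^2+\qb^2\equiv 1$ to see that the normal derivatives drop out upon contraction with $(\langle X_1,\n\rangle,\langle X_2,\n\rangle)$), so that identifying $\operatorname{div}V|_S$ with the $\mathcal H$ of Proposition \ref{P:mc} is justified. Second, in the last step the natural choice $\mathcal X=\psi\,\nuX$, which gives $\Fx=\psi$, has only $C^1$ coefficients when $S$ is $C^2$, while the admissible class requires $a,b,k\in C^2_0(S)$; it is cleaner to localize on the open sets where $\pb\neq 0$ (resp.\ $\qb\neq 0$) and take $\mathcal X=aX_1$ (resp.\ $\mathcal X=bX_2$) with $a$ (resp.\ $b$) arbitrary in $C^2_0$, which yields $\mathcal H\,\pb\equiv 0$ (resp.\ $\mathcal H\,\qb\equiv 0$) and hence $\mathcal H\equiv 0$ on all of $S$.
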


To state the next result we introduce a notation. Given the quantity
$\ob$ we let
\[
\mathcal A\ =\ -\ \del \ob.
\]

The following second variation formula was proved in
\cite{DGN:minimal}.

\begin{Thm}\label{T:svgeometric}
Let $\mS\subset \HH$ be a minimal surface with empty characteristic
locus, then
\[
\sv\ =\ \int_S \bigg\{|\del \Fx|^2\ +\ (2\mathcal A - \ob^2)
\Fx^2\bigg\} d\sigma_H.
\]
As a consequence, $S$ is stable if and only if for any $\mathcal X$
one has
\[
\int_S (\ob^2 - 2\mathcal A) \Fx^2 d\sigma_H\ \leq\ \int_S |\del
\Fx|^2\  d\sigma_H.
\]
\end{Thm}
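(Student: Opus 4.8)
The plan is to obtain the second variation by differentiating the first variation formula of Theorem \ref{T:variations} a second time, using minimality to discard all but one term. Since the variation field $\mathcal X$ is held fixed along the family $S^\la = S + \la \mathcal X$, for each fixed (small) $\la$ the surface $S^\la$ is itself deformed in the direction $\mathcal X$ and, by openness of the noncharacteristic condition, still has empty characteristic locus; thus Theorem \ref{T:variations} applies to $S^\la$ and gives
\[
\frac{d}{d\la} P_H(S^\la)\ =\ \mathcal V^H_I(S^\la;\mathcal X)\ =\ \int_{S^\la} \mathcal H^\la\, \Fx^\la\, d\sigma_H^\la,
\]
where the superscript $\la$ denotes the relevant quantities computed on $S^\la$. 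Differentiating once more and setting $\la = 0$, the product rule produces three groups of terms, according to whether the $\la$-derivative falls on $\mathcal H^\la$, on $\Fx^\la$, or on the measure $d\sigma_H^\la$. Because $\mS$ is minimal, $\mathcal H \equiv 0$ on $S = S^0$, so the two latter groups carry an undifferentiated factor $\mathcal H$ and vanish identically at $\la = 0$. Hence only the derivative of the curvature survives, and
\[
\sv\ =\ \int_S \dot{\mathcal H}\,\Fx\, d\sigma_H, \qquad \dot{\mathcal H}\ :=\ \frac{d}{d\la}\mathcal H^\la\Bl.
\]

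Next I would compute the linearized curvature $\dot{\mathcal H}$, that is, the sub-Riemannian Jacobi operator applied to the normal variation $\Fx$. Writing $\mathcal H^\la = X_1 \pb^\la + X_2 \qb^\la$ from Proposition \ref{P:mc}, I would differentiate the constituents $p,q,\omega,W$ of the Riemannian normal $\bN^\la$ in $\la$, read off the $\la$-derivatives of $\pb = p/W$, $\qb = q/W$ and $\ob = \omega/W$ at $\la = 0$, and reassemble $\dot{\mathcal H}$. The outcome is a second-order operator acting on $\Fx$ along the one-dimensional horizontal tangent direction $\nup$, of the schematic form $\dot{\mathcal H} = -\,\delhs \Fx + (\text{lower-order terms in } \Fx)$, where $\delhs$ is the intrinsic surface sub-Laplacian. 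The nonintegrability of the horizontal distribution, encoded in $[X_1,X_2]=T$, is precisely what forces the appearance of the extra zeroth-order contribution proportional to $\ob^2$; keeping exact track of this term, and of the first-order terms that organize into $\mathcal A = -\del\ob$, is the delicate point of the whole argument.

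Finally I would insert $\dot{\mathcal H}$ into $\sv = \int_S \dot{\mathcal H}\,\Fx\, d\sigma_H$ and integrate by parts along $S$. Since $a,b,k \in C^2_0(\mS)$, the function $\Fx$ is compactly supported and no boundary terms arise; the divergence structure of $\mathcal H = X_1\pb + X_2\qb$ makes $-\delhs$ formally self-adjoint, so its principal part contributes the Dirichlet energy $\int_S |\del \Fx|^2\, d\sigma_H$, while the lower-order terms collect into $\int_S (2\mathcal A - \ob^2)\,\Fx^2\, d\sigma_H$, yielding
\[
\sv\ =\ \int_S \bigl\{ |\del \Fx|^2 + (2\mathcal A - \ob^2)\,\Fx^2 \bigr\}\, d\sigma_H.
\]
The stability characterization is then immediate: $S$ is stable exactly when this quadratic form is nonnegative for every admissible $\mathcal X$, which rearranges to the stated inequality. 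The principal obstacle is the second step, namely the correct linearization of $\mathcal H$ in the sub-Riemannian setting and the clean identification of the curvature potential $2\mathcal A - \ob^2$; once this is in hand, the self-adjointness of the linearized operator renders the concluding integration by parts routine.
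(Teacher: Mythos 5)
First, a point of orientation: this paper does not prove Theorem \ref{T:svgeometric} at all; it quotes it from \cite{DGN:minimal}, so there is no in-paper proof to compare your argument against. Judged on its own terms, your proposal has the right skeleton: differentiating the first variation formula in $\la$, observing that minimality of $S$ kills every term except the one where the derivative falls on $\mathcal H^\la$, hence $\sv = \int_S \dot{\mathcal H}\,\Fx\,d\sigma_H$, and then integrating by parts. This is indeed the standard route to second variation formulas for minimal surfaces, and your justification of the reduction (the undifferentiated factor $\mathcal H$ vanishes at $\la=0$) is sound.

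However, there is a genuine gap: the entire content of the theorem is the identification of the linearized operator, i.e.\ the claim that $\dot{\mathcal H}$ is $-\delhs\Fx$ plus zeroth- and first-order terms which, after integration by parts, produce exactly the potential $2\mathcal A - \ob^2$. Your proposal does not perform this computation; it asserts its outcome in ``schematic form'' and itself labels it ``the delicate point of the whole argument.'' Without actually differentiating $\pb^\la = p^\la/W^\la$ and $\qb^\la = q^\la/W^\la$ in $\la$ --- which involves the change of the left-invariant frame $\{X_1,X_2,T\}$ along the deformation, the source of the $\ob^2$ term --- one cannot verify the sign of the potential, the factor $2$ in $2\mathcal A$, or even that the lower-order terms close up into an expression in $\Fx$ alone rather than in the individual coefficients $a,b,k$. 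A second unaddressed point is your final integration by parts: you treat $\del$ as if it obeyed a divergence theorem with respect to $d\sigma_H$ with no correction terms. Since $d\sigma_H = (W/|\bN|)\,d\sigma$ is not the Riemannian surface measure, and $\del$ differentiates only along the single horizontal tangential direction $\nup$, the formal self-adjointness of $\delhs$ in $L^2(S,d\sigma_H)$ is a nontrivial fact that must be established (it is part of the sub-Riemannian calculus developed in \cite{DGN:minimal}); any divergence defect would contaminate precisely the potential $2\mathcal A - \ob^2$ you are trying to identify. As it stands, your text is a credible plan for a proof, not a proof.
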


The following result is Corollary 15.4 in \cite{DGN:minimal}. Let
$\phi:\Omega \subset \R^2_{(u,v)} \ra \R$ give an intrinsic
$X_1$-graph $S$, we recall the formula \eqref{igper} for the
horizontal perimeter of $S$.

\begin{Cor}\label{C:svig}
Let $S$ be a $C^2$ minimal, intrinsic $X_1$-graph, then for any
$\mathcal X$ one has
\[
\sv\ =\ \int_\Om \frac{\mathcal B_\phi(\Fx)^2}{\sqrt{1 + \mathcal
B_\phi(\phi)^2}}\ du dv\ -\ \int_\Om \frac{\phi_v^2 + 2 \mathcal
B_\phi(\phi_v)}{\sqrt{1 + \mathcal B_\phi(\phi)^2}}\ \Fx^2\ du dv,
\]
where $\Fx$ is as in \eqref{inner}.
\end{Cor}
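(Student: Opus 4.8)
The goal is to establish Corollary~\ref{C:svig}, namely to specialize the intrinsic second variation formula of Theorem~\ref{T:svgeometric} to the case of an intrinsic $X_1$-graph. Let me think about how the pieces fit together.

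We have a geometric formula in Theorem~\ref{T:svgeometric} written in terms of $|\del \Fx|^2$, the quantity $\mathcal A = -\del\ob$, and $\ob^2$. The corollary claims this equals a concrete expression involving the linearized Burger operator $\mathcal B_\phi$. So the whole task is a translation from the intrinsic surface geometry into the $(u,v)$-coordinates of the graph.

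Let me figure out the dictionary. For an intrinsic $X_1$-graph $S = \{(0,u,v)\circ\phi(u,v)e_1\}$, I need to compute the various geometric quantities. The surface is parametrized by $(u,v)$, so the first step is to write down the parametrization $\Phi(u,v) = (\phi(u,v), u, v - \frac{u}{2}\phi(u,v))$ and compute its tangent vectors $\Phi_u, \Phi_v$, then the Riemannian normal $\bN = \Phi_u \times \Phi_v$ (up to orientation). From $\bN$ I extract $p = \langle\bN,X_1\rangle$, $q = \langle\bN,X_2\rangle$, $\omega = \langle\bN,T\rangle$, and then $W = \sqrt{p^2+q^2}$, and the barred quantities $\pb, \qb, \ob$. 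I expect these to come out in terms of $\phi$ and its first derivatives, with $W/|\bN|$ reproducing the perimeter density $\sqrt{1+\mathcal B_\phi(\phi)^2}$ from \eqref{igper}.

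Let me now think about the structure. The horizontal tangent bundle $HTS$ is one-dimensional (since $S$ has empty characteristic locus and $\HH$ is $\HH^1$), so $\del$ is essentially differentiation along a single horizontal tangent direction. The plan is: (i) identify the unit horizontal tangent vector field along $S$ — it is $\nup$, the rotation of the horizontal Gauss map — and show that differentiating a function $f$ along it, i.e. $\del f$, corresponds under the parametrization to a multiple of $\mathcal B_\phi(f)$ divided by the density; (ii) compute $\ob$ explicitly and verify that $\ob^2 = \phi_v^2/(1+\mathcal B_\phi(\phi)^2)$ or some such expression; (iii) compute $\mathcal A = -\del\ob$ and massage it, using the minimal surface equation $\mathcal B_\phi(\mathcal B_\phi(\phi)) = 0$, into the term $2\mathcal B_\phi(\phi_v)$ that appears; (iv) account for the change from $d\sigma_H$ to $du\,dv$ by the Jacobian factor, which brings in the density $\sqrt{1+\mathcal B_\phi(\phi)^2}$ appropriately. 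Assembling these reproduces the two integrals in the statement.

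The main obstacle is step (iii): correctly expressing $2\mathcal A - \ob^2 = -2\del\ob - \ob^2$ in $(u,v)$-coordinates and recognizing the combination $\phi_v^2 + 2\mathcal B_\phi(\phi_v)$. This requires carefully commuting derivatives, using the minimality equation to eliminate second-order terms that do not appear in the final formula, and keeping precise track of the $W$ and $|\bN|$ factors relating $\del$ to $\mathcal B_\phi$ and $d\sigma_H$ to $du\,dv$. The bookkeeping of these density factors — ensuring that $|\del\Fx|^2 d\sigma_H$ becomes $\mathcal B_\phi(\Fx)^2/\sqrt{1+\mathcal B_\phi(\phi)^2}\,du\,dv$ — is where sign and normalization errors are most likely to creep in, so I would verify the density computation first on the vertical-plane case where $\phi$ is affine and everything should reduce transparently.
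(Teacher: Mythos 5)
Your strategy is the right one in outline, but note first that the paper itself does not prove Corollary \ref{C:svig} at all: it simply quotes it as Corollary 15.4 of \cite{DGN:minimal}. So the real question is whether your proposal stands on its own as a proof, and as written it does not: it is a plan in which every quantitative step is deferred. You never compute the normal of the parametrization $(u,v)\mapsto(\phi(u,v),u,v-\tfrac{u}{2}\phi(u,v))$ (in the orthonormal frame one finds $\bN = X_1 - \mathcal B_\phi(\phi)\,X_2 - \phi_v\,T$, whence $\pb = (1+\mathcal B_\phi(\phi)^2)^{-1/2}$, $\qb = -\mathcal B_\phi(\phi)(1+\mathcal B_\phi(\phi)^2)^{-1/2}$, $\ob = -\phi_v(1+\mathcal B_\phi(\phi)^2)^{-1/2}$); you never identify $\del$ with $\pm(1+\mathcal B_\phi(\phi)^2)^{-1/2}\mathcal B_\phi$ (the horizontal tangent line is spanned by $\mathcal B_\phi(\phi)X_1+X_2$, which corresponds to the coordinate field $\partial_u+\phi\,\partial_v$); and you never verify $d\sigma_H = \sqrt{1+\mathcal B_\phi(\phi)^2}\,du\,dv$. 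These are exactly the identities that turn Theorem \ref{T:svgeometric} into the stated formula, and without them nothing has been proved.

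Most seriously, the step you yourself single out as ``the main obstacle,'' namely showing that $(\ob^2-2\mathcal A)\,d\sigma_H$ produces exactly $(\phi_v^2+2\mathcal B_\phi(\phi_v))(1+\mathcal B_\phi(\phi)^2)^{-1/2}\,du\,dv$, is left entirely unresolved, and it is where all the content of the corollary lies. Carrying it out with the quantities above and writing $W=\sqrt{1+\mathcal B_\phi(\phi)^2}$, one gets
\begin{equation*}
\mathcal A \ =\ -\,\del\,\ob\ =\ -\,\frac{\mathcal B_\phi(\phi_v)}{W^2}\ +\ \phi_v\,\frac{\mathcal B_\phi(\phi)\,\mathcal B_\phi\big(\mathcal B_\phi(\phi)\big)}{W^4},
\end{equation*}
where the second term is killed by the minimality equation $\mathcal B_\phi(\mathcal B_\phi(\phi))=0$ and the overall sign requires fixing the orientation of $\nup=\qb X_1-\pb X_2$ consistently (with the values of $\pb,\qb$ above, $\nup$ corresponds to $-W^{-1}(\partial_u+\phi\,\partial_v)$). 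Only then does $(2\mathcal A-\ob^2)\,W^2 = -\big(\phi_v^2+2\mathcal B_\phi(\phi_v)\big)$ follow, which is the claimed second integrand; the sign in front of $2\mathcal B_\phi(\phi_v)$ is precisely what makes this term a destabilizing contribution in the later instability arguments, so it cannot be waved through as ``bookkeeping.'' In short: correct dictionary, sound plan, but the proposal is an outline of a proof rather than a proof, since the decisive computation (iii) and the normalization checks it depends on are acknowledged but not performed.
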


We next derive the second variation formula for special deformations
of the intrinsic graph $S$. We consider compactly supported vector
fields on $S$ of the type $\mathcal X = \psi X_1$, where $\psi\in
C_0^2(S)$. For this family of deformations we obtain from Corollary
\ref{C:svig}.

\begin{Thm}\label{T:svig}
Let $S$ be a $C^2$ minimal, intrinsic $X_1$-graph, given by a
function $\phi:\Omega \subset \R^2_{(u,v)} \ra \R$, then for any
$\psi\in C^2_0(S)$ one has
\begin{align}\label{svig}
\mathcal V^H_{II}(S,\psi X_1)\ & =\ \int_\Om \frac{\mathcal
B_\phi(\psi)^2}{(1 + \mathcal B_\phi(\phi)^2)^{3/2}}\ du dv
\\
&    -\  \int_\Om \frac{\psi^2}{(1 + \mathcal B_\phi(\phi)^2)^{3/2}}
\bigg(2 \big(\mathcal B_\phi(\phi)\big)_v - \phi_v^2\bigg)\ du dv.
\notag\end{align}
\end{Thm}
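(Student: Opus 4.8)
The plan is to specialize the general intrinsic-graph formula of Corollary \ref{C:svig} to the deformation field $\mathcal X = \psi X_1$, for which $a = \psi$ and $b = k = 0$, so that by \eqref{inner} one has $\Fx = \pb\,\psi$. The first task is therefore to identify the horizontal Gauss map of an intrinsic $X_1$-graph explicitly. Using the parametrization $S = \{(\phi(u,v),u,v-\tfrac{u}{2}\phi(u,v))\mid (u,v)\in\Om\}$ recorded in Definition \ref{D:intgraph}, I would compute a Riemannian normal $\bN$ by passing from the Euclidean conormal $\Phi_u\times\Phi_v$ to the moving frame $\{X_1,X_2,T\}$ through the dual contact coframe $\{dx,\,dy,\,dt+\tfrac{y}{2}dx-\tfrac{x}{2}dy\}$. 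On $S$ (where $x=\phi$, $y=u$) this yields, in the notation of \eqref{pqw}, the values $p = 1$, $q = -\mathcal B_\phi(\phi)$ and $\omega = -\phi_v$, whence $W = \sqrt{1+\mathcal B_\phi(\phi)^2}$ and, by \eqref{bars}, $\pb = 1/W$. In particular $\Fx = \psi/W$ and $\Fx^2 = \psi^2/W^2$.

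For the zeroth-order (potential) term of Corollary \ref{C:svig} I would first record the commutation identity $(\mathcal B_\phi(\phi))_v = \mathcal B_\phi(\phi_v) + \phi_v^2$, which follows at once from $\mathcal B_\phi(f) = f_u + \phi f_v$ and the equality of mixed partials. This rewrites the bracket as $\phi_v^2 + 2\mathcal B_\phi(\phi_v) = 2(\mathcal B_\phi(\phi))_v - \phi_v^2$. Substituting this, together with $\Fx^2 = \psi^2/W^2$ and $W = \sqrt{1+\mathcal B_\phi(\phi)^2}$, into the second integral of Corollary \ref{C:svig} reproduces verbatim the second integral of \eqref{svig}, with no further manipulation required.

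The decisive step is the first (gradient) term $\int_\Om \mathcal B_\phi(\Fx)^2/W\,du\,dv$. Since $\mathcal B_\phi$ is a derivation, the Leibniz rule gives $\mathcal B_\phi(\psi/W) = \mathcal B_\phi(\psi)/W - \psi\,\mathcal B_\phi(W)/W^2$, so a priori this term carries cross and quadratic corrections involving $\mathcal B_\phi(W)$. The point I would exploit is that these corrections vanish identically on a minimal surface: because $W = \sqrt{1+\mathcal B_\phi(\phi)^2}$, the chain rule yields $\mathcal B_\phi(W) = \mathcal B_\phi(\phi)\,\mathcal B_\phi(\mathcal B_\phi(\phi))/W$, and the minimal surface equation \eqref{mseig} forces $\mathcal B_\phi(\mathcal B_\phi(\phi)) = 0$, hence $\mathcal B_\phi(W) \equiv 0$. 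Consequently $\mathcal B_\phi(\Fx) = \mathcal B_\phi(\psi)/W$, and the first integral collapses to $\int_\Om \mathcal B_\phi(\psi)^2/W^3\,du\,dv$, matching \eqref{svig}. I expect the main obstacle to be the bookkeeping of the first step—correctly extracting $(\pb,\qb,\ob)$ from the left-invariant, rather than Euclidean, normal—together with the recognition that minimality, through the vanishing of $\mathcal B_\phi(W)$, is precisely what removes the unwanted first-order terms; without it the gradient term would not simplify and the clean form \eqref{svig} would fail.
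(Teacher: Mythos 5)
Your proposal is correct and follows essentially the same route as the paper: both specialize Corollary \ref{C:svig} to $\mathcal X = \psi X_1$, identify $\Fx = \psi/\sqrt{1+\mathcal B_\phi(\phi)^2}$, use the minimal surface equation \eqref{mseig} to kill the extra term in $\mathcal B_\phi(\Fx)$, and invoke the commutation identity $\big(\mathcal B_\phi(\phi)\big)_v - \mathcal B_\phi(\phi_v) = \phi_v^2$ to rewrite the potential term. The only difference is cosmetic: the paper quotes the formulas for $\pb,\qb$ from \cite{DGN:minimal}, whereas you (correctly) rederive them from the parametrization.
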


\begin{Rem}\label{R:abuse}
In the statement of the above result the function $\psi\in
C_0^2(S)$. Slightly abusing the notation in the integral in the
right-hand side of \eqref{svig} we have continued to indicate with
$\psi$ the function in $C^2_0(\Om)$ obtained by composing the
original $\psi$ with the parametrization of the surface $S$
\[
\Om \ni (u,v)\ \longmapsto\ \left(\phi(u,v),u,v -
\frac{u}{2}\phi(u,v)\right).
\]
\end{Rem}

\begin{proof}[\textbf{Proof}]
We note that with $\mathcal X = \psi X_1$, we have $a = \psi$, $b =
k = 0$. We also recall, see \cite{DGN:minimal}, that for an
intrinsic $X_1$-graph one has
\[
\pb\ =\ \frac{1}{\sqrt{1 + \mathcal B_\phi(\phi)^2}}\ ,\ \ \ \qb\ =\
-\ \frac{\mathcal B_\phi(\phi)}{\sqrt{1 + \mathcal B_\phi(\phi)^2}},
\]
and therefore from \eqref{inner} one has
\begin{equation}\label{Fig}
\Fx\ =\ \frac{\psi}{\sqrt{1 + \mathcal B_\phi(\phi)^2}}.
\end{equation}

From this formula a simple computation gives
\[
\mathcal B_\phi(\Fx)\ =\ \frac{\mathcal B_\phi(\psi)}{\sqrt{1 +
\mathcal B_\phi(\phi)^2}}\
 -\ \frac{\mathcal B_\phi(\phi) \mathcal B_\phi\big(\mathcal B_\phi(\phi)\big)}{(1 + \mathcal B_\phi(\phi)^2)^{3/2}}.
 \]

We now recall that the minimality of $S$ is equivalent to $\phi$
being a solution of the double Burger equation
\[
\mathcal B_\phi(\mathcal B_\phi(\phi))\ =\ 0.
\]
We thus conclude that
\begin{equation}\label{BFig}
\mathcal B_\phi(\Fx)\ =\ \frac{\mathcal B_\phi(\psi)}{\sqrt{1 +
\mathcal B_\phi(\phi)^2}}.
\end{equation}

Using \eqref{Fig} and the identity
\[
\big(\mathcal B_\phi(\phi)\big)_v\ -\ \mathcal B_\phi(\phi_v)\ =\
\phi_v^2,
\]
we thus obtain
\[
-\ \int_\Om \frac{\phi_v^2 + 2 \mathcal B_\phi(\phi_v)}{\sqrt{1 +
\mathcal B_\phi(\phi)^2}}\ \Fx^2\ du dv\  =\ -\  \int_\Om
\frac{\psi^2}{(1 + \mathcal B_\phi(\phi)^2)^{3/2}} \bigg(2
\big(\mathcal B_\phi(\phi)\big)_v - \phi_v^2\bigg)\ du dv.
\]

On the other hand, \eqref{BFig} gives
\[
\int_\Om \frac{\mathcal B_\phi(\Fx)^2}{\sqrt{1 + \mathcal
B_\phi(\phi)^2}}\ du dv\ =\ \int_\Om \frac{\mathcal
B_\phi(\psi)^2}{(1 + \mathcal B_\phi(\phi)^2)^{3/2}}\ du dv.
\]

Combining the last two equations we reach the desired conclusion.

\end{proof}

Next, we apply Theorem \ref{T:svig} to the case of a strict
intrinsic graphical strip as in Definition \ref{intdeltastrip}. We
recall the diffeomorphism $\Psi:\R\times J\to \Om = \Psi(\R\times J)
\subset \R^2_{u,v}$ given by $\Psi(u,s) = (u,v) =
(u,\frac{u^2}{2}G(s)+ F(s)u + \sigma(s))$, see \eqref{if}. As
before, in the statement of the next result given a function
$\psi\in C_0^2(S)$ slightly abusing the notation we will write
$\psi\in C^2_0(\Om)$. What we mean by this is the composition of the
original $\psi$ with the parametrization of the surface $S$
\[
\Om \ni (u,v)\ \longmapsto\ \left(\phi(u,v),u,v -
\frac{u}{2}\phi(u,v)\right)
\]
provided in Definition \ref{intdeltastrip}.

\begin{Cor}\label{T:varstrip}  Let $S$ be a strict intrinsic
graphical strip defined by functions $F, G, \sigma\in C^2(J)$ and
$\phi(u,v) = F(s(u,v)) + u G(s(u,v))$, as in Definition
\ref{intdeltastrip}. One has for any $\psi\in C^2_0(S)$,
\begin{align}\label{T:strip}
\mathcal V^H_{II}(S,\psi X_1)&\ =\ \int_{\R \times J}
\bigg(\left(\frac{\partial}{\partial u} (\psi\circ\Psi)(u,s)
\right)^2 \frac{G'(s) \frac{u^2}{2} + F'(s)u +
\sigma'(s)}{(1+G(s)^2)^{\frac{3}{2}}}\\
\notag &\qquad\qquad\qquad\qquad
\ +\
\frac{(\psi\circ\Psi)(u,s)^2}{(1+G(s)^2)^{\frac{3}{2}}}\,
\frac{F'(s)^2-2\sigma'(s)G'(s)}{G'(s) \frac{u^2}{2} +  F'(s)u  +
\sigma'(s)} \bigg) du ds, \notag
\end{align}
where we have indicated with $\Psi:\R \times J \to \Om$ the
diffeomorphism defined by \eqref{if}.
\end{Cor}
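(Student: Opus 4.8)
The plan is to start from the second variation formula \eqref{svig} of Theorem \ref{T:svig} and push it forward under the diffeomorphism $\Psi$ of \eqref{if}. The central observation, which I would establish first, is that the level curves $\{s = \text{const}\}$ of the inverse diffeomorphism are exactly the characteristic curves of the linearized Burgers operator $\mathcal B_\phi = \p_u + \phi\,\p_v$. Indeed, differentiating \eqref{if} at fixed $s$ gives $\frac{\p v}{\p u}\big|_s = G(s)u + F(s)$, which coincides with $\phi(u,v) = F(s) + uG(s)$. Consequently, for any $C^1$ function $g$ on $\Om$, writing $\tilde g = g\circ\Psi$ one has the identity
\[
\mathcal B_\phi(g)(u,v)\ =\ \frac{\p}{\p u}\tilde g(u,s)\Big|_{s\ \text{fixed}}.
\]

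From this identity everything flows. Applying it to $g=\phi$ yields $\mathcal B_\phi(\phi) = \p_u(F(s)+uG(s))\big|_s = G(s)$, and hence $1+\mathcal B_\phi(\phi)^2 = 1+G(s)^2$; applying $\mathcal B_\phi$ once more gives $\mathcal B_\phi(\mathcal B_\phi(\phi)) = \p_u G(s)\big|_s = 0$, confirming (as in Remark \ref{R:H-m}) that $S$ is minimal and that Theorem \ref{T:svig} applies. Applied to $g = \psi$ the identity gives $\mathcal B_\phi(\psi) = \p_u(\psi\circ\Psi)$. For the first integral in \eqref{svig} I would then simply perform the change of variables $du\,dv = \det J_\Psi\,du\,ds$, where by \eqref{jac} $\det J_\Psi = G'(s)\frac{u^2}{2}+F'(s)u+\sigma'(s)$, which is strictly positive under \eqref{nd} (together with Remark \ref{R:G'>0}); this reproduces the first term of \eqref{T:strip}.

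The second integral requires computing $2(\mathcal B_\phi(\phi))_v - \phi_v^2$ in the $(u,s)$ variables. Here I would use the inverse function theorem for $\Psi$: from the triangular form of $J_\Psi$ one reads off $s_v = 1/\det J_\Psi$. Then $(\mathcal B_\phi(\phi))_v = \p_v G(s) = G'(s)\,s_v$ and $\phi_v = (F'(s)+uG'(s))\,s_v$, so, writing $D = \det J_\Psi$,
\[
2(\mathcal B_\phi(\phi))_v - \phi_v^2\ =\ \frac{2G'(s)D - (F'(s)+uG'(s))^2}{D^2}.
\]
The key step---and the only place where a genuine computation is needed---is to expand the numerator and observe that the $u$-dependent terms cancel, leaving exactly $2\sigma'(s)G'(s)-F'(s)^2$. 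Substituting this, changing variables, and noting that one factor of $D$ cancels against the $1/D^2$ then produces the second term of \eqref{T:strip}, the sign flip turning $2\sigma'G'-F'^2$ into $F'^2 - 2\sigma'G'$.

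I expect no serious obstacle: the proof is essentially the change of variables $\Psi$ combined with the characteristic interpretation of $\mathcal B_\phi$. The one point demanding care is the algebraic collapse of the numerator above (verifying that all terms involving $u$ and $u^2$ disappear), together with keeping careful track of the single surviving Jacobian factor $D$ when reconciling the $1/D^2$ coming from $\phi_v^2$ with the $D\,du\,ds$ from the area element.
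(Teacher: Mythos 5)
Your proposal is correct and follows essentially the same route as the paper: start from Theorem \ref{T:svig}, use the inverse of $\Psi$ (via $s_v = 1/\det J_\Psi$) to compute $\mathcal B_\phi(\phi)=G(s)$, $\mathcal B_\phi(\psi)=\partial_u(\psi\circ\Psi)$ and $2(\mathcal B_\phi(\phi))_v-\phi_v^2$, then change variables with Jacobian $D=G'\frac{u^2}{2}+F'u+\sigma'$. Your bookkeeping is in fact slightly more careful than the paper's: the collapse $2G'D-(F'+uG')^2 = 2\sigma'G'-F'^2$ sits over $D^2$ (the paper's intermediate display writes $D$, a typo), and your observation that one factor of $D$ from the area element cancels against this $1/D^2$ is exactly what reconciles the computation with \eqref{T:strip}.
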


\begin{proof}[\textbf{Proof}]
We note that the proof of this theorem is similar to that of
equation (5.12) of \cite{BSV}. Since every strict intrinsic
graphical strip is an intrinsic $X_1$-graph, we can apply the second
variation formula \eqref{svig} in Theorem \ref{T:svig}. In this
formula we want to use the global diffeomorphism $\Psi:\R \times J
\to \Om$ to convert the integral on $\Om$ to an integral on
$\R\times J$. By \eqref{jac}
\begin{align*}
\det J_\Psi(u,s) & =  \det \begin{pmatrix} 1 & 0
\\
v_u & v_s\end{pmatrix} = \det \begin{pmatrix} 1 & 0 \\
G(s)u + F(s) &
 G'(s) \frac{u^2}{2} +  F'(s)u  + \sigma'(s)
\end{pmatrix}
\\
& = G'(s) \frac{u^2}{2} +  F'(s)u  + \sigma'(s).
\end{align*}
We emphasize that since we are assuming that $S$ is a strict
graphical strip, then \eqref{nd} is in force, and therefore the
Jacobian of $\Psi$ is always different from zero. Recall that we are
also assuming that $\Psi$ is globally one-to-one. The Inverse
Function Theorem gives at every point $(u,v) = \Psi(u,s)$
\[
J_{\Psi^{-1}}(u,v) = \begin{pmatrix} 1 & 0 \\
- \frac{G(s)u + F(s)}{G'(s) \frac{u^2}{2} +  F'(s)u  + \sigma'(s)} &
 \frac{1}{G'(s) \frac{u^2}{2} +  F'(s)u  + \sigma'(s)}
\end{pmatrix}.
\]
We thus have
\begin{equation}\label{sigmaderivs}
s_u = - \frac{G(s)u + F(s)}{G'(s) \frac{u^2}{2} +  F'(s)u  +
\sigma'(s)},\;\; \ \ s_v = \frac{1}{G'(s) \frac{u^2}{2} +  F'(s)u  +
\sigma'(s)}.
\end{equation}
Using \eqref{sigmaderivs} and the assumption that $\phi(u,v)= F(s) +
u G(s)$, we thus find
\begin{equation*}
\begin{split}
\mathcal B_\phi(\phi) &= \phi_u + \phi \phi_v =
G(s)+(G'(s)u+F'(s))s_u+\phi(G'(s)
u+F'(s))s_v\\
&= G(s) - \frac{(F'(s)+uG'(s))(F(s)+uG(s))}{G'(s) \frac{u^2}{2} +  F'(s)u  + \sigma'(s)} +
\frac{(F'(s)+uG'(s))(F(s)+uG(s))}{G'(s) \frac{u^2}{2} +  F'(s)u  + \sigma'(s)}\\
&=G(s).
\end{split}
\end{equation*}
This gives,
\[
 \big(\mathcal B_\phi(\phi)\big)_v = G'(s) s_v = \frac{G'(s)}{G'(s) \frac{u^2}{2} +  F'(s)u  + \sigma'(s)},
 \]
 \[
 (\phi_v)^2= \left(F'(s) + u G'(s)\right)^2 s_v^2 =  \left(\frac{F'(s) + u G'(s)}{G'(s) \frac{u^2}{2} +  F'(s)u  + \sigma'(s)}\right)^2.
 \]
Combining these formulas yields
\[
2\big(\mathcal B_\phi(\phi)\big)_v - \phi_v^2 =
\frac{2\sigma'(s)G'(s)-F'(s)^2}{G'(s) \frac{u^2}{2} +  F'(s)u  +
\sigma'(s)}\ .
\]
Substituting this into the second integral in the right-hand side of
\eqref{svig} gives
\[ \mathcal V^H_{II}(S,\psi X_1)\ =\ \int_\Omega
\frac{1}{(1+G(s)^2)^{\frac{3}{2}}} \left ( (\mathcal B_\phi(\psi)^2
+ \psi^2
    \left(
      \frac{F'(s)^2-2\sigma'(s)G'(s)}{G'(s) \frac{u^2}{2} +  F'(s)u  + \sigma'(s)} \right )\right ) \; du \;dv\ .\]
Now, to complete the proof, we make the change of variable $(u,v) =
\Psi(u,s)$, with $(u,s)\in \R \times J$. The Jacobian of such
diffeomorphism is given by \eqref{jac} which gives \[ du dv =
\left(G'(s) \frac{u^2}{2} +  F'(s)u  + \sigma'(s)\right) du ds.
\] Observe furthermore that
\[
\mathcal B_\phi(\psi) =  \psi_u+\phi\psi_v = \psi_u + (F + G
u)\psi_v = \psi_u + v_u \psi_v =  \frac{\partial}{\partial u}
\psi(u,v(u,s)) = \frac{\p }{\p u} (\psi \circ \Psi)(u,s).
\]

Thus, we conclude that
\begin{align*}
\mathcal V^H_{II}(S,\psi X_1)\ & = \int_{\R \times J}
\bigg(\left(\frac{\p }{\p u} (\psi \circ \Psi)(u,s)) \right)^2
\frac{G'(s) \frac{u^2}{2} + F'(s)u +
\sigma'(s)}{(1+G(\sigma)^2)^{\frac{3}{2}}}
\\
&\qquad\qquad\qquad\qquad
 +\ \frac{(\psi\circ
\Psi)(u,s)^2}{(1+G(s)^2)^{\frac{3}{2}}}
\frac{F'(s)^2-2\sigma'(s)G'(s)}{G'(s) \frac{u^2}{2} +  F'(s)u  +
\sigma'(s)} \bigg) du ds, \notag
\end{align*}
which proves \eqref{T:strip}.

\end{proof}

\section{\textbf{Proof of Theorem \ref{I:unstable}: Strict intrinsic graphical strips are unstable}}\label{S:explicit}

In this section using the techniques of \cite{DGN:stable} and the
modifications of \cite{DGNP}, we construct a variation which
strictly decreases the horizontal area of a strict intrinsic
graphical strip (that is, we find a test function $\psi$ for which
$\mathcal V^H_{II}(S,\psi X_1) < 0$.  To construct such a $\psi$
we start by constructing a sequence $\psi_k$.  We will show that
for large enough $k$, we have $\mathcal V^H_{II}(S,\psi_k X_1) < 0$.
This proves that such surfaces
are unstable, thus establishing Theorem \ref{I:unstable}.

For any given $\delta>0$, we fix a function $\chi \in
C^\infty_0(\R)$ so that $0 \le \chi(s) \le 1, \chi(s)=1$ for $|s|
\le \delta, \chi(s)=0$ for $|s|\ge 2\delta$, and $|\chi'|\le
C=C(\delta)$.  For each $k \in \mathbb{N}$, we let
$\chi_k(s)=\chi(s/k)$ and hence
\begin{itemize}
\item $\chi_k(s) =0$ for $|s|\ge 2\delta k$
\item $\chi_k(s)=1$ for $|s| \le \delta k$
\item $|\chi_k'(s)| \le C/k$
\end{itemize}
Next, fix a function $\zeta \in C^\infty_0(\mathbb{R})$ with $\zeta
\ge 0$, $supp(\zeta)=[-1,1]$ and $\int_\mathbb{R} \zeta \; ds =1$.
Letting, $\zeta_k(s)=k\zeta(ks)$, we have that $supp({\zeta}_k) =
[-1/k,1/k]$ and $\int_\R {\zeta}_k(s)\; ds =1$. Let $F$, $G$ and
$\sigma$ be the functions in Definition \ref{intdeltastrip} with
\begin{equation}\label{E:str-cond}
F'(s)^2 - 2\sigma'(s)G'(s) < 0\ \quad\ \ \ s\in J.
\end{equation}
As we have mentioned in the introduction, without loss of generality
we assume that $G', \sigma'>0$ in $J$. We define $F_k = F\star
{\zeta}_k$, $G_k = G\star \zeta_k$, $\sigma_k = \sigma \star
\zeta_k$.  Since $F$, $G$ and $\sigma$ are continuous on $J$.
Shrinking $J$ slightly if necessary, we may assume that they are
uniformly continuous on $\bar J$. Therefore $F_k\to F$, $F'_k\to
F'$, $G_k\to G$, $G'_k\to G'$, $\sigma_k \to \sigma$ and
$\sigma_k'\to \sigma'$ uniformly on $\bar J$.  The condition
\eqref{E:str-cond} now carries over to $F_k, G_k, \sigma_k$, that
is, there is a positive integer $k_o$ such that if $k > k_o$
(relabeling the sequence if necessary, we take $k_o = 1$) then for
every $s\in J$, $F_k'(s)^2 - 2\sigma_k'(s) G_k'(s) < 0$. The left
hand side of this inequality is precisely the discriminant of the
quadratic expression in the variable $u$:
\[
G'_k(s) \frac{u^2}{2} +  F'_k(s)u  +
\sigma'_k(s)\ .
\]
Since the discriminant is strictly negative, $G'_k(s) \frac{u^2}{2}+
F'_k(s)u + \sigma'_k(s)$ never vanishes for $u\in\R$ and $s\in J$.
Next, we construct a sequence of test functions $\psi_k$ to be used
in the formula \eqref{T:strip}.  We let

\begin{equation}\label{E:tests}
\psi_k(u,s) \overset{def}{=} \frac{\chi(s)\chi_k(u)}{\left(G'_k(s)
\frac{u^2}{2} +  F'_k(s)u  + \sigma'_k(s)\right)^{\frac{1}{2}}}\ ,
\end{equation}
We note that $\psi_k\in C^\infty_0(\R\times J)$ due to the above
considerations. With $\psi_k$ in hand, we analyze $\mathcal
V^H_{II}(S,\psi_k X_1)$.    Before proceeding to the computations,
we remark that the function $\psi$ in \eqref{T:strip} is defined on
$\Om = \Psi(\R\times J)$.  Our $\psi_k$'s have been already defined
on the $(u,s)$ space, that is on $\R\times J$.  Therefore,
occurrences of $\psi\circ\Psi$ in \eqref{T:strip} will be replaced
by $\psi_k$ in the proof of the subsequent two lemmas.  We start
with the second term in the right hand side of \eqref{T:strip}.

\begin{Lem}\label{RHS} We have
\begin{equation*}
\begin{split}
\lim_{k \ra \infty} \int_{\R \times J} \frac{\psi_k(u,s)^2}{(1+G(s)^2)^{\frac{3}{2}}}
&\,\frac{F'(s)^2-2\sigma'(s)G'(s)}{G'(s) \frac{u^2}{2} +  F'(s)u  +
\sigma'(s)}\, du\;ds \\
&\ =\ -2\pi\int_J
\frac{\chi(s)^2}{(1+G(s)^2)^\frac{3}{2}}
\,\frac{G'(s)}{(2\sigma'(s)G'(s) - F'(s)^2)^{\frac{1}{2}}} \, ds
\end{split}
\end{equation*}
\end{Lem}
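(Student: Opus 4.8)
The plan is to pass to the limit under the integral sign by dominated convergence, after which the claim reduces to an explicit one-dimensional integral of the form $\int_\R (au^2+bu+c)^{-2}\,du$. First I would record the pointwise limit of the integrand. Writing
\[
\mathcal Q_k(u,s) = G_k'(s)\frac{u^2}{2} + F_k'(s)u + \sigma_k'(s), \qquad \mathcal Q(u,s) = G'(s)\frac{u^2}{2} + F'(s)u + \sigma'(s),
\]
the integrand equals
\[
\frac{\chi(s)^2\,\chi_k(u)^2}{\mathcal Q_k(u,s)}\cdot\frac{1}{(1+G(s)^2)^{3/2}}\cdot\frac{F'(s)^2 - 2\sigma'(s)G'(s)}{\mathcal Q(u,s)}.
\]
Since $\chi_k(u)=\chi(u/k)\to 1$ for every fixed $u$, and $G_k'\to G'$, $F_k'\to F'$, $\sigma_k'\to\sigma'$ uniformly on $\bar J$, we have $\mathcal Q_k(u,s)\to \mathcal Q(u,s)$ for each fixed $(u,s)$, so the integrand converges pointwise to $\chi(s)^2(F'^2 - 2\sigma'G')\big[(1+G^2)^{3/2}\,\mathcal Q(u,s)^2\big]^{-1}$.

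The heart of the argument, and the step I expect to be the main obstacle, is to produce a single integrable majorant valid for all $k$. Here I would exploit that the strict condition \eqref{E:str-cond} is stable under mollification: for $k$ large the discriminant $F_k'^2 - 2\sigma_k'G_k'$ stays bounded away from $0$, while $G_k'$ and $\sigma_k'$ remain bounded above and below by positive constants on $\bar J$, by uniform convergence together with Remark \ref{R:G'>0}. Completing the square, the minimum value of $\mathcal Q_k$ in $u$ equals $(2\sigma_k'G_k' - F_k'^2)/(2G_k')$, which is uniformly positive; since the leading coefficient $G_k'/2$ is also uniformly positive and $|F_k'/G_k'|$ is uniformly bounded, this yields a coercive lower bound $\mathcal Q_k(u,s)\ge c_0(1+u^2)$ with $c_0>0$ independent of $k$ and of $s\in\bar J$, and the same bound for $\mathcal Q$. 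Consequently the integrand is dominated in absolute value by $C(1+u^2)^{-2}$ with $C$ independent of $k$, which is integrable on $\R\times J$ because $|J|<\infty$ and $\int_\R (1+u^2)^{-2}\,du<\infty$. Dominated convergence then lets me replace the integral by $\int_{\R\times J}\chi(s)^2(F'^2 - 2\sigma'G')\big[(1+G^2)^{3/2}\,\mathcal Q(u,s)^2\big]^{-1}\,du\,ds$.

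Finally I would carry out the $u$-integration by Fubini. Using the standard evaluation $\int_\R (au^2+bu+c)^{-2}\,du = 4\pi a\,(4ac-b^2)^{-3/2}$, valid for $a>0$ and $4ac-b^2>0$ (obtained by completing the square and the identity $\int_\R (w^2+\alpha^2)^{-2}\,dw = \pi/(2\alpha^3)$), with $a=G'/2$, $b=F'$, $c=\sigma'$, one finds
\[
\int_\R \frac{du}{\mathcal Q(u,s)^2} = \frac{2\pi G'(s)}{(2\sigma'(s)G'(s) - F'(s)^2)^{3/2}}.
\]
Multiplying by the factor $F'^2 - 2\sigma'G' = -(2\sigma'G' - F'^2)$ collapses the exponent $3/2$ to $1/2$ and produces the sign $-1$, leaving $-2\pi G'(s)\,(2\sigma'(s)G'(s)-F'(s)^2)^{-1/2}$; integrating this against $\chi(s)^2(1+G(s)^2)^{-3/2}$ over $J$ is exactly the asserted right-hand side.
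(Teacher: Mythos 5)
Your proposal is correct and follows essentially the same route as the paper: pass to the limit by dominated convergence, then evaluate $\int_\R (au^2+bu+c)^{-2}\,du = 4\pi a\,(4ac-b^2)^{-3/2}$ with $a=G'/2$, $b=F'$, $c=\sigma'$ to collapse the exponent and produce the factor $-2\pi$. The only (cosmetic) difference is in how the majorant is justified: the paper uses the two-sided comparison $\tfrac12|Q| < |Q_k| < 2|Q|$ coming from uniform convergence of the coefficients, whereas you derive the explicit coercive bound $\mathcal Q_k(u,s)\ge c_0(1+u^2)$ by completing the square — a slightly more careful version of the same step.
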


\begin{proof}[\textbf{Proof}]
Substituting the quantity $\psi\circ\Psi$ with $\psi_k$ in the second term
of the right hand side of \eqref{T:strip} and recalling the definition of
$\psi_k$ we have

\begin{align}\label{tmp0}
& \lim_{k \ra \infty} \int_{\R \times J}
\frac{\psi_k(u,s)^2}{(1+G(s)^2)^{\frac{3}{2}}}\,
\frac{F'(s)^2-2\sigma'(s)G'(s)}{G'(s)\frac{u^2}{2} + F'(s) u +
\sigma'(s)}\, du \; ds  \\ &\ =\ \lim_{k \ra \infty} \int_J
\chi(s)^2\frac{F'(s)^2 -
2\sigma'(s)G'(s)}{(1+G(s)^2)^{\frac{3}{2}}} \notag\\
& \times \left(\int_{\R} \frac{\chi_k(u)^2}{(G_k'(s)\frac{u^2}{2} +
F_k'(s) u + \sigma_k'(s)) (G'(s)\frac{u^2}{2} + F'(s) u +
\sigma'(s))} \, du\right) ds \notag\\ &\ =\ \int_J
\chi(s)^2\frac{F'(s)^2-2\sigma'(s)G'(s)}{(1+G(s)^2)^{\frac{3}{2}}}
\left(\int_\R \frac{1}{(G'(s)\frac{u^2}{2} + F'(s) u +
\sigma'(s))^2}\, du \right ) ds.\notag
\end{align}
In the above, we have used the fact that since for each $u\in\R$,
\[
G_k'(s)\frac{u^2}{2} + F_k'(s) u + \sigma_k'(s) \longrightarrow
G'(s)\frac{u^2}{2} + F'(s) u + \sigma'(s) \quad\text{as } k \to \infty
\]
uniformly for $s\in \bar J$, and the latter quantity never vanishes,
we have
\[
\frac{1}{2} |G'(s)\frac{u^2}{2} + F'(s) u + \sigma'(s) |
\ <\
|G_k'(s)\frac{u^2}{2} + F_k'(s) u + \sigma_k'(s)|
\ <\
2 |G'(s)\frac{u^2}{2} + F'(s) u + \sigma'(s)|\ .
\]
Hence, Lebesgue dominated convergence theorem allows taking the
limit inside the integral. Next, we want to compute the integral
\[
\int_\R \frac{1}{(G'(s)\frac{u^2}{2} + F'(s) u + \sigma'(s))^2}\,
du.
\]
Using standard integration techniques we obtain
\[
\int \frac{1}{(Au^2 + Bu + C)^2} \,du
\ =\
\frac{2Au + B}{(4AC-B^2)(Au^2+Bu+C)}\ +\
\frac{4A}{(4AC - B^2)^\frac{3}{2}}\, arctan\left(\frac{2Au+B}{\sqrt{4AC - B^2}}\right)\ .
\]
This implies if $A > 0$
\[
\int_{\R} \frac{1}{(Au^2 + Bu + C)^2} \,du
\ =\
\frac{4\pi A}{(4AC - B^2)^\frac{3}{2}}\ .
\]
Since we have that $G'(s) > 0$, letting $A = G'(s)/2$, $B = F'(s)$ and $C = \sigma'(s)$ we have

\begin{equation}\label{E:tmp1}
\int_\R \frac{1}{(G'(s)\frac{u^2}{2} + F'(s) u + \sigma'(s))^2}\, du
\ =\ 2\,\pi \frac{G'(s)}{(2\sigma'(s)G'(s) - F'(s)^2)^\frac{3}{2}}\ .
\end{equation}
Substituting \eqref{E:tmp1} in \eqref{tmp0} we reach the desired
conclusion.

\end{proof}

Now we turn to the first term in the right hand side of \eqref{T:strip}.

\begin{Lem}\label{LHS}  We have
\begin{align*}
& \lim_{k \ra \infty}
\int_{\R \times J}
\left(\left(\frac{\partial
  \psi(u,s)}{\partial u} \right )^2
\frac{G'(s)\frac{u^2}{2} + F'(s) u + \sigma'(s)}{(1+G(s)^2)^{\frac{3}{2}}}\right
)\,du \; ds \\
& \qquad\qquad
\ = \frac{\pi}{2}\int_J
\frac{\chi(s)^2}{(1+G(s)^2)^{\frac{3}{2}}}\,
\frac{G'(s)}{(2\sigma'(s)G'(s)-F'(s)^2)^{\frac{1}{2}}} \, ds
\end{align*}
\end{Lem}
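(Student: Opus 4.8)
The plan is to differentiate the test function $\psi_k$ from \eqref{E:tests} in $u$, expand the square, and show that only one of the three resulting pieces survives the limit $k\to\infty$; that surviving piece is then evaluated by the same rational-integral computation used in Lemma \ref{RHS}. Writing $D_k(u,s) = G_k'(s)\frac{u^2}{2} + F_k'(s)u + \sigma_k'(s)$, one has
\[
\frac{\partial \psi_k}{\partial u} = \chi(s)\Big[\chi_k'(u)\,D_k^{-1/2} - \tfrac{1}{2}\chi_k(u)\,D_k^{-3/2}\big(G_k'(s)u + F_k'(s)\big)\Big],
\]
so that $(\partial_u\psi_k)^2$ splits into a \emph{cutoff term} $\chi(s)^2\chi_k'(u)^2 D_k^{-1}$, a \emph{cross term} proportional to $\chi_k'(u)\chi_k(u)$, and a \emph{main term} $\tfrac14\chi(s)^2\chi_k(u)^2 D_k^{-3}(G_k'u+F_k')^2$, each to be multiplied by the weight $(G'(s)\frac{u^2}{2}+F'(s)u+\sigma'(s))/(1+G(s)^2)^{3/2}$ and integrated over $\R\times J$.

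First I would dispose of the two terms carrying $\chi_k'$. Since $\chi_k'(u)=\frac1k\chi'(u/k)$ is supported in $\delta k\le|u|\le 2\delta k$ and bounded by $C/k$, and since on $\bar J$ the lower bound $G'(s)\ge c>0$ (available by Remark \ref{R:G'>0}) forces both $D(u,s)$ and $D_k(u,s)$ to grow like $u^2$ and to remain comparable, the cutoff term has integrand of order $k^{-2}$ on a $u$-set of measure of order $k$, hence contributes $O(k^{-1})\to 0$; the cross term is handled identically after noting $|G_k'u+F_k'|\lesssim|u|+1$, which again yields an integrand of order $k^{-2}$ on the same set. Thus both vanish in the limit.

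The heart of the matter is the main term. Exactly as in the proof of Lemma \ref{RHS}, the uniform convergence $G_k'\to G'$, $F_k'\to F'$, $\sigma_k'\to\sigma'$ on $\bar J$ together with the two-sided comparison $\tfrac12 D\le D_k\le 2D$ produces a dominating function of order $(|u|+1)^2/D^2 = O(u^{-2})$, so the dominated convergence theorem lets me replace $D_k$, $G_k'$, $F_k'$, $\chi_k$ by $D$, $G'$, $F'$, $1$ and reduces the main term to
\[
\tfrac14\int_J \frac{\chi(s)^2}{(1+G(s)^2)^{3/2}}\left(\int_\R \frac{(G'(s)u+F'(s))^2}{\big(G'(s)\frac{u^2}{2}+F'(s)u+\sigma'(s)\big)^2}\,du\right)ds.
\]

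Finally I would evaluate the inner integral. Writing the quadratic as $Au^2+Bu+C$ with $A=G'/2$, $B=F'$, $C=\sigma'$, the numerator is exactly $(2Au+B)^2$, and the algebraic identity $(2Au+B)^2 = 4A(Au^2+Bu+C)+(B^2-4AC)$ reduces the integral to the two standard integrals $\int_\R(Au^2+Bu+C)^{-1}du = 2\pi/\sqrt{4AC-B^2}$ and $\int_\R(Au^2+Bu+C)^{-2}du = 4\pi A/(4AC-B^2)^{3/2}$, the latter already recorded in Lemma \ref{RHS}. With $4AC-B^2 = 2\sigma'G'-(F')^2>0$ the two contributions combine to $2\pi G'/\sqrt{2\sigma'G'-(F')^2}$, and inserting this yields precisely the claimed right-hand side. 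The only delicate points are the uniform lower bound on $D$ and the domination for the main term; once $G'\ge c>0$ and $2\sigma'G'-(F')^2\ge c'>0$ are secured on the compact $\bar J$, everything else is routine.
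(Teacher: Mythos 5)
Your proof is correct, and it follows the paper's overall scheme: the same test functions $\psi_k$, the same expansion of $(\partial_u\psi_k)^2$ into cutoff, cross, and main terms, and the same uniform-convergence-plus-comparison argument ($\tfrac12 Q\le Q_k\le 2Q$, where $Q(u,s)=G'(s)\frac{u^2}{2}+F'(s)u+\sigma'(s)$ and $Q_k$ is its mollified version) to pass to the limit. Where you genuinely diverge is in the disposal of the cross term and the evaluation of the main term. The paper treats the cross term by integrating by parts in $u$ and then showing that the limiting integral vanishes identically, through the cancellation $\int_\R D^2/Q^2\,du=\int_\R Q_{uu}/Q\,du$ with $D=Q_u$; you instead kill it before any limit is taken, by a size estimate exploiting that $\chi_k'$ is supported in $\delta k\le|u|\le 2\delta k$, where $Q$ and $Q_k$ are comparable to $u^2$, so the integrand is $O(k^{-2})$ on a set of $u$-measure $O(k)$ and the whole term is $O(1/k)$. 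For the main term the paper again integrates by parts, turning $\tfrac14\int_\R D^2/Q^2\,du$ into $\tfrac14\int_\R G'/Q\,du$ and finishing with the arctangent integral; you avoid integration by parts via the identity $(2Au+B)^2=4A(Au^2+Bu+C)+(B^2-4AC)$, which reduces everything to the two standard rational integrals, one of which is already recorded in Lemma \ref{RHS}. Your route is more elementary, makes the smallness of the cross term transparent (no cancellation needed), and independently confirms the constant $\frac{\pi}{2}$ in the statement --- a worthwhile check, since the paper's displayed intermediate value for its third term reads $\pi G'/(2\sigma'G'-F'^2)^{1/2}$ whereas its own computation yields half that, which is exactly what the final statement requires; the paper's integration-by-parts route is marginally shorter to write. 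One small point common to both arguments: the lower bounds $G'\ge c>0$ and $2\sigma'G'-(F')^2\ge c'>0$ are needed (and hold) on the support of $\chi$, a compact subset of $J$; invoking them ``on the compact $\bar J$'' as you do is harmless here, but strictly speaking these quantities could degenerate at the endpoints of $J$, and it is the factor $\chi(s)^2$ in the integrand that makes the restriction to the support of $\chi$ sufficient.
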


\begin{proof}[\textbf{Proof}]
Again, we closely follow the development in \cite{DGNP}. By
recalling \eqref{E:tests} we first obtain
\[
\frac{\partial \psi_k}{\partial u}(u,s) \ =\
\frac{\chi(s)}{2}\left(\frac{2\chi_k'(u) Q_k(u,s) - \chi_k(u)
D_k(u,s)}{Q_k(u,s)^\frac{3}{2}}\right),
\]
where we have let
\[
Q_k(u,s) = G_k'(s)\frac{u^2}{2} + F_k'(s) u + \sigma_k'(s)
\quad\text{and}\quad D_k(u,s) = u G_k'(s) + F_k'(s).
\]
For the computations that follow, it is convenient to also let
\[
Q(u,s) = G'(s)\frac{u^2}{2} + F'(s) u + \sigma'(s)
\quad\text{and}\quad D(u,s) = \frac{\partial}{\partial u} Q(u,s) = u
G'(s) + F'(s).
\]
It follows that
\[
\left(\frac{\partial \psi_k}{\partial u}(u,s)\right)^2  =
\chi(s)^2\left(\frac{\chi_k'(u)^2}{Q_k(u,s)} -
\frac{1}{2}(\chi_k(u)^2)'\frac{D_k(u,s)}{Q_k(u,s)^2} +
\frac{1}{4}\chi_k(u)^2 \frac{D_k(u,s)^2}{Q_k(u,s)^3}\right).
\]
Substituting the quantity $\psi\circ\Psi$ in the first term of the
right hand side of \eqref{T:strip}, and using the above expression
for $\psi_{k,u}$, we have
\begin{equation*}
\begin{split}
\int_{\R\times J} \left(\frac{\partial
  \psi_k(u,s)}{\partial u} \right )^2
\frac{G'(s)\frac{u^2}{2} + F'(s) u +
\sigma'(s)}{(1+G(s)^2)^{\frac{3}{2}}} \, du\;ds & = \int_J
\frac{\chi(s)^2}{(1+G'(s)^2)^{\frac{3}{2}}}
\left(\,\fbox{1}+\fbox{2}+\fbox{3}\, \right )\,ds
\end{split}
\end{equation*}
where,
\begin{align*}
\fbox{1} =  \int_\R \chi_k'(u)^2 &\frac{Q(u,s)}{Q_k(u,s)}\, du,
\qquad
\fbox{2} = -\frac{1}{2}\int_\R (\chi_k^2(u))'Q(u,s)\frac{D_k(u,s)}{Q_k(u,s)^2}\, du, \\
& \fbox{3} = \frac{1}{4} \int_\R \chi_k(u)^2 Q(u,s)
\frac{D_k(u,s)^2}{Q_k(u,s)^3} \, du.
\end{align*}
Since $|\chi_k'(u)| \le \frac{C}{k}$, by Lebesgue dominated
convergence theorem we have
\begin{equation}\label{LHS0}
\lim_{k \ra \infty} \fbox{1} =0
\end{equation}
In addition, since $D_k(u,s) \ra D(u,s), Q_k(u,s) \ra Q(u,s)$, and
$\chi_k(s) \ra 1$ when $k \ra \infty$, we obtain
\begin{align}\label{LHS3}
\lim_{k \ra \infty} \fbox{3}
&\ =\ \frac{1}{4} \int_\R \frac{D(u,s)^2}{Q(u,s)^2}\, du
\ =\
-\,\frac{1}{4} \int_\R \frac{\partial}{\partial u} Q(u,s)\,\frac{\partial}{\partial u}\left(\frac{1}{Q(u,s)}\right)\,du \\
\notag
&\ =\ \frac{1}{4}\int_\R\frac{\partial^2 Q(u,s)}{\partial u^2}\,\frac{1}{Q(u,s)}\,du
\ =\ \frac{1}{4}\int_\R \frac{G'(s)}{G'(s)\frac{u^2}{2} + F'(s) u + \sigma'(s)}\,du \\
\notag
&\ =\ \frac{\pi\,G'(s)}{(2\sigma'(s)G'(s) - F'(s)^2)^\frac{1}{2}}\ .
\end{align}
The third equality above is obtained by integration by parts whereas
in the last equality, we have used the fact that $G'(s)>0$ and
standard calculus techniques. Now we turn to the quantity
$\fbox{2}$.

\begin{align}\label{LHS4}
\lim_{k \ra \infty}\fbox{2}
&\ =\ -\lim_{k \ra \infty} \frac{1}{2}
\int_\R \left(\chi_k(u)^2)\right)' Q(u,s)\frac{D_k(u,s)}{Q_k(u,s)}\,du \\
&  =
- \lim_{k \ra \infty}\frac{1}{2}\int_\R \chi_k(u)^2 \frac{\partial}{\partial u}\left(\frac{Q(u,s)\,D_k(u,s)}{Q_k(u,s)^2}\right)\,du \\
\notag\\ & = - \lim_{k \ra \infty}\frac{1}{2}\int_\R \chi_k(u)^2
\bigg(\frac{Q_u(u,s)\,D_k(u,s)}{Q_k(u,s)^2}
\notag\\
& + \frac{Q(u,s)\,D_{k,u}(u,s)}{Q_k(u,s)^2} -
2\,\frac{Q(u,s)\,D_k(u,s)\,Q_{k,u}(u,s)}{Q_k(u,s)^3}\bigg)\,du
\notag\\ & = - \frac{1}{2}\int_\R \frac{Q_u(u,s)\,D(u,s)}{Q(u,s)^2}
+ \frac{D_u(u,s)}{Q(u,s)} - 2\,\frac{D(u,s)Q_u(u,s)}{Q(u,s)^2}\,du
\notag\\ & =
-\frac{1}{2}\int_\R \frac{G'(s)}{Q(u,s)}\,du -
\frac{1}{2}\int_\R \frac{\partial}{\partial u} Q(u,s)\,\frac{\partial}{\partial u}\left(\frac{1}{Q(u,s)}\right)\,du \notag\\
\notag & = -\frac{1}{2}\int_\R \frac{G'(s)}{Q(u,s)}\,du  +
\frac{1}{2}\int_\R \frac{Q_{uu}(u,s)}{Q(u,s)}\,du  = 0, \notag
\end{align}
since $Q_{uu}(u,s) = G'(s)$. Combining \eqref{LHS0}, \eqref{LHS3}
and \eqref{LHS4}, we obtain the desired conclusion.

\end{proof}

Combining \eqref{T:strip} with Lemmas \ref{RHS} and \ref{LHS} we can
now prove Theorem \ref{I:unstable} in the introduction.

\vspace{.2in}

\noindent
\begin{proof}[\textbf{Proof of Theorem \ref{I:unstable}.}]
Let $\psi_k$ be the function constructed in \eqref{E:tests} and
consider $\psi_k\circ \Psi^{-1}\in C^2_0(\Om)$, where $\Psi$ is the
diffeomorphism in \eqref{if}. If we lift this function to the
surface, and by abuse of notation we continue to indicate with
$\psi_k$ such lifted function, we obtain a function in $C^2_0(S)$.
From Corollary \ref{T:varstrip}, Lemmas \ref{RHS}, \ref{LHS} and the
fact that $G'(s) > 0$ on $J$ we deduce that
\[
\lim_{k \ra \infty}\mathcal V^H_{II}(S,(\psi_k X_1)
 =
\left(\frac{\pi}{2}-2\pi\right)\int_J
\frac{\chi(s)^2}{(1+G(s)^2)^\frac{3}{2}}
\,\frac{G'(s)}{(2\sigma'(s)G'(s) - F'(s)^2)^{\frac{1}{2}}} \, ds
 < 0.
\]
Therefore, for large enough $k$ we have $\mathcal V^H_{II}(S,\psi_k
X_1) < 0$. This completes the proof.

\end{proof}


\section{\textbf{Proof of Theorem \ref{T:existstrip0}: Existence of strict intrinsic graphical strips}}\label{S:strips}

The main objective of this section is to establish the crucial
Theorem \ref{T:existstrip0} in the introduction. The proof of this
result will be accomplished in several steps. Before we turn to the
general discussion it will be helpful for the understanding of
Definition \ref{intdeltastrip} to analyze directly the situation of
the surfaces introduced in \eqref{cat0}.

\subsection{The sub-Riemannian catenoid is unstable}\label{SS:catenoid}

In what follows we illustrate the construction of a strict intrinsic
graphical strip for the hyperboloids of revolution in $\HH$
described by \eqref{cat0}. This is an interesting example of a
complete embedded minimal surface in $\HH$ which has empty
characteristic locus and which is neither a graph over any plane,
nor an intrinsic graph in the sense of \cite{FSSC2}, \cite{FSSC3}.
Such surface should be considered as the sub-Riemannian analogue of
the \emph{catenoid} in the classical theory of minimal surfaces. We
emphasize that \eqref{cat0} does not contain any strict graphical
strip in the sense of \cite{DGNP}, and therefore the results in that
paper do not apply to it. Instead, as a consequence of the following
calculations and Theorem \ref{I:unstable} we are able to conclude
that the surface \eqref{cat0} is unstable. To fix the ideas we will
focus on the case $a=0, b=4$, in which case we have from
\eqref{cat0}
\begin{equation}\label{cat}
t^2 - \frac{1}{4}\left((x^2+y^2)-1\right).
\end{equation}
A local parametrization of $S$ as a ruled surface is given by
\begin{equation}\label{theta}
 \theta(r,s) = \left(r \sin s + \cos s, r \cos s - \sin s,
\frac{r}{2}\right),\ \ \ r\in \R, -\pi< s < \pi.
\end{equation}
Clearly, if we consider the open set $U = \R \times (-\pi,\pi)$,
then $\theta(U)$ does not cover the whole catenoid, but this fact in
inconsequential for what follows. We now consider the projection
mapping $\Pi:\R^3 \to \R^2\times \{0\}$ given by
\[
\Pi(x,y,t) = (0,y,t+\frac{xy}{2}).
\]
We thus have
\begin{align*}
\Pi(\theta(U)) & = \left(0,r \cos s - \sin s,\frac{r}{2} + \frac{(r
\sin s + \cos s)(r \cos s - \sin s)}{2}\right).
\\
& = \left(0,r \cos s - \sin s, \frac{r^2}{2} \sin s \cos s  + r
\cos^2 s - \frac{\sin s \cos s}{2}\right)
\end{align*}
We now define a mapping from the $(r,s)$ to the $(u,s)$ plane by
setting \[ \Lambda(r,s) = (r \cos s - \sin s,s). \] With
$\epsilon\in (0,\pi/4)$ to be chosen later, and
\[
U_\epsilon = \R \times (-\epsilon,\epsilon),
\]
it is clear that $\Lambda$ is a $C^\infty$ diffeomorphism of
$U_\epsilon$ onto its image $\Lambda(U_\epsilon)$. Notice that,
thanks to the fact that $1<\sec s<\sqrt 2$ for $-\epsilon <
s<\epsilon$, we have $\Lambda(U_\epsilon) = U_\epsilon$. Let us
notice that the inverse diffeomorphism is given by
\[ (r,s) = \Lambda^{-1}(u,s) = \left(\frac{u+\sin s}{\cos
s},s\right) = (u \sec s + \tan s, s).
\]

Next, we define a mapping from the $(r,s)$ to the $(u,v)$ plane by
setting
\[
\Phi(r,s) = (u,v)
\]
with
\begin{equation}\label{1}
\begin{cases}
u = r \cos s - \sin s,
\\
v = \frac{r^2}{2} \sin s \cos s + r \cos^2 s - \frac{\sin s \cos
s}{2}.
\end{cases}
\end{equation}
We want to show that $\Phi$ is a diffeomorphism onto its image. To
see this we take the composition $\Psi = \Phi \circ \Lambda^{-1} :
U_\epsilon \to \R^2$, which maps the $(u,s)$ to the $(u,v)$ plane.
We obtain
\[
(u,v) = \Psi(u,s) = \Phi(\Lambda^{-1}(u,s)) = \left(u,G(s)
\frac{u^2}{2} + F(s) u + \sigma(s)\right),
\]
where
\[
\begin{cases}
G(s) =  \tan s,
\\
F(s) = \sec s,
\\
\sigma(s) =  \frac{\tan s}{2}.
\end{cases}
\]
Let us observe that the determinant of the Jacobian of $\Psi(u,s)$
at any point $(u,s)\in U_\epsilon$ is given by
\[
G'(s)\frac{u^2}{2} + F'(s)u + \sigma'(s) = \frac{\sec^2 s}{2}
\left[u^2 + 2 \sin s\ u + 1 \right].
\]
Since for the quadratic expression within the square brackets we
have
\[
\Delta = \sin^2 s - 1 < 0,
\]
it is clear that such determinant never vanishes. We next show that
$\Psi$ is globally  one-to-one on $U_\epsilon$ provided that
$\epsilon>0$ is chosen sufficiently small. Suppose by contradiction
that $(u,s), (u',s')\in U_\epsilon$, $(u,s) \not= (u',s')$, and
$\Psi(u,s) = \Psi(u',s')$. It cannot be $u\not= u'$ (since then
$\Psi(u,s) \not= \Psi(u',s')$). We can thus suppose that $s\not=
s'$, but $u= u'$. Since $\tan s$ is strictly increasing, $s\not= s'$
implies $G(s)\not= G(s')$. But then we must have
\begin{equation}\label{2}
u^2 + 2 \frac{F(s) - F(s')}{G(s) - G(s')} u + 1 = 0.
\end{equation}
We would like to show that there exists $0<\epsilon <\pi/4$ such
that for every $s,s'\in (-\epsilon,\epsilon)$, with $s\not= s'$, one
has
\begin{equation}\label{3}
\left(\frac{F(s) - F(s')}{G(s) - G(s')}\right)^2 < 1.
\end{equation}
If this were the case then we would reach a contradiction since this
implies that the equation \eqref{2} has no real solutions. Now
\eqref{3} is equivalent to
\begin{equation}\label{4}
\left(\frac{\sec s - \sec s'}{\tan s - \tan s'}\right)^2 < 1,
\end{equation}
for every $s,s'\in (-\epsilon,\epsilon)$, with $s\not= s'$. Without
restriction we can assume $s<s'$, otherwise we reverse their role.
Using the mean value theorem we find that for some $\xi, \xi'\in
(s,s')\subset (-\epsilon,\epsilon)$
\[
\frac{\sec s - \sec s'}{\tan s - \tan s'} = \frac{\sec \xi \tan
\xi}{1 + \tan^2 \xi'} \to 0,\ \ \ \text{as}\ \epsilon \to 0^+.
\]
Therefore, we can achieve \eqref{3} provided that $\epsilon > 0$ is
sufficiently small. Having fixed $\epsilon$ in such a way, the map
$\Psi : U_\epsilon \to \R^2$ defines a $C^\infty$ diffeomorphism
from the $(u,s)$ plane onto its image $V_\epsilon \overset{def}{=}
\Psi(U_\epsilon)$, which is an open set of the $(u,v)$ plane. We now
claim that there exists $\delta = \delta(\epsilon)>0$ such that
\begin{equation}\label{strip}
\Om \overset{def}{=} \R \times (-\delta,\delta) \subset V_\epsilon.
\end{equation}
To prove \eqref{strip} it suffices to show that, as $s$ ranges over
the interval $(-\epsilon,\epsilon)$ the $v$-coordinate of the
vertices of the parabolas $v = v(u,s) = \frac{\tan s}{2} u^2 + \sec
s\ u + \frac{\tan s}{2}$ are uniformly bounded away from zero. Let
us notice that the line $s = 0$ in the $(u,s)$ plane is mapped to
the line $v=u$ of the $(u,v)$ plane. For $s\not= 0$ the $v$
coordinate of the vertex of the parabola is given by
\[
v(s) = - \frac{\sec^2 s(1-\sin^2 s)}{2 \tan s} = - \frac{\cot s}{2}.
\] Now on the interval $0<s<\epsilon$ we have $v(s)\to -\infty$ as
$s\to 0^+$, whereas on $(-\epsilon,0)$ we have $v(s) \to + \infty$
as $s\to 0^-$. Since $\cot s$ is strictly decreasing on
$(-\epsilon,\epsilon)$, we conclude that if we take
\[
\delta = \delta(\epsilon) = \frac{\cot \epsilon}{2},
\]
then \eqref{strip} is verified. Since the composition of
diffeomorphisms is a diffeomorphism as well, we conclude that
\[
\Phi \overset{def}{=} \Psi \circ \Lambda : U_\epsilon \to \Om
\subset \R^2_{u,v}
\] is also a diffeomorphism.  At this point, using the inverse
diffeomorphism $\Phi^{-1}: \Om \to U_\epsilon$, we define
\[
\phi(u,v) = \theta_1(\Phi^{-1}(u,v)),\ \ \ (u,v)\in \Om,
\]
where $\theta_1(r,s) = r \sin s + \cos s$ is the first component of
the map $\theta$ in \eqref{theta}. Notice that
\[
\phi(u,v) = F(s(u,v)) + G(s(u,v)) u,
\]
where $(r(u,v),s(u,v))$ is the inverse diffeomorphism of \eqref{1}.

With this definition of $\phi$ we now see that portion of the
catenoid which is parametrized by $\theta$ on the open set
$U_\epsilon = \R\times (-\epsilon,\epsilon)$ is in fact given as the
$X_1$-graph
\[
\left(\phi(u,v),u, v - \frac{u}{2} \phi(u,v)\right),
\]
for $(u,v)\in \Om$. Finally, let us notice that such piece of the
surface is a strict intrinsic graphical strip in the sense of
Definition \ref{intdeltastrip} since the condition
\[
F'(s)^2 < 2 G'(s) \sigma'(s),\ \ \ \ s\in (-\epsilon,\epsilon),
\]
is verified.

\subsection{Proof of Theorem \ref{T:existstrip0}}\label{SS:MT}

The above analysis should allow the reader a clear understanding of
the motivation behind the Definition \ref{intdeltastrip} of strict
intrinsic graphical strip. Our next objective is proving that,
similarly to the sub-Riemannian catenoid, every complete minimal
surface without boundary and with empty characteristic locus
contains a strict intrinsic graphical strip, unless the surface is a
vertical plane. In this general case the construction of the strict
graphical strip is more difficult. Our approach hinges on the
following basic representation theorem for minimal surfaces which is
a consequence of the results in \cite{GP}, and which has already
proved crucial in \cite{DGNP}.

\begin{Thm}\label{T:thma}
Let $S$ be a $C^2$ complete embedded non-characteristic minimal
surface without boundary and assume that it is not a vertical plane.
Let $g_0\in S$ be a point admitting a neighborhood (in $S$) that may
be written as a graph over the plane $t=0$. There exist a
neighborhood $U$ of $g_0$, an interval $J$, and functions $h_0 \in
C^2(J)$, $\gamma \in C^3(J,\R^2)$, with $|\gamma'(s)| = 1$ for $s\in
J$,  such that $U$ is parameterized by $\mathscr{L}: \R \times J \to
\mathbb{H}$
\begin{equation}\label{seedrep}
\mathscr{L}(r,s)\ =\
 \left (\gamma(s)+r{(\gamma')}^\perp(s),h_0(s)-\frac{r}{2}\gamma(s) \cdot
   \gamma'(s)\right )
\end{equation}
for $s \in J, r \in \R$.  Moreover, with $W_0(s)=h_0'(s)+\frac{1}{2}\gamma' \cdot
\gamma^\perp(s)$ and $\kappa(s)=\gamma'' \cdot (\gamma')^\perp$, we have that
\begin{equation}\label{cl}
 1-2W_0(s)\kappa(s)\ < 0\ ,\ \ \ s\in J\ .
\end{equation}
\end{Thm}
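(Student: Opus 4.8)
The plan is to realize the neighborhood $U$ as a portion of $S$ that is ruled by horizontal straight lines, to write down the seed parametrization \eqref{seedrep} adapted to this ruling, and then to extract \eqref{cl} from the requirement that none of these lines meets the characteristic locus. First I would invoke the structure theory of \cite{GP}. On the noncharacteristic minimal surface $S$ the horizontal Gauss map $\nuX = \pb X_1 + \qb X_2$ is a well defined $C^1$ field, and its horizontal rotation $\nup = -\qb X_1 + \pb X_2$ spans the horizontal tangent line $HTS = TS \cap H\HH$. The minimality identity $\mathcal H = X_1\pb + X_2\qb \equiv 0$ of Proposition \ref{P:mc} forces $\nuX$ to be parallel along the integral curves of $\nup$; consequently each such integral curve is a horizontal (Legendrian) straight line in $\HH$, and these lines foliate a neighborhood of $g_0$.

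Next I would choose a seed curve through $g_0$ transverse to this foliation, taking its projection $\gamma$ to the plane $t=0$ to be an integral curve of the projected Gauss map $(\pb,\qb)$; since $\pb^2+\qb^2 = 1$ this is automatically an arc-length parametrization, giving $|\gamma'|=1$, and I record the height $h_0(s)$. The leaf through $(\gamma(s),h_0(s))$ runs in the horizontal direction whose projection is $(\gamma')^\perp$, and imposing that the tangent $\partial_r\mathscr{L}$ be horizontal pins down its $t$-component uniquely to $h_0(s)-\tfrac r2\,\gamma\cdot\gamma'$; this produces exactly the expression \eqref{seedrep}. The regularity $\gamma\in C^3$, $h_0\in C^2$ is a bootstrap coming from the seed-curve ODE attached to minimality in \cite{GP}. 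Because $S$ is complete, embedded, and without boundary, each maximal leaf extends to an entire horizontal line contained in $S$, so that $U := \mathscr{L}(\R\times J)$ is swept out by $\mathscr{L}$ with $r$ ranging over all of $\R$.

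With the parametrization in place, the derivation of \eqref{cl} is a direct computation. One checks that $\partial_r\mathscr{L}$ is horizontal for every $(r,s)$, so that $\mathscr{L}(r,s)\in\Sigma(S)$ precisely when the second tangent $\partial_s\mathscr{L}$ is horizontal as well, i.e. when the $T$-component of $\partial_s\mathscr{L}$ in the frame $\{X_1,X_2,T\}$ vanishes (equivalently $T_gS = H_g$). Writing out this vanishing condition and using $|\gamma'|=1$, the signed curvature $\kappa = \gamma''\cdot(\gamma')^\perp$, and $W_0 = h_0' + \tfrac12\gamma'\cdot\gamma^\perp$, the characteristic condition collapses to the single quadratic $\tfrac{\kappa}{2}\,r^2 - r + W_0 = 0$ in the variable $r$. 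Since $S$ has empty characteristic locus and the leaf over $s$ covers all of $r\in\R$, this quadratic can have no real root; hence its discriminant $1 - 2W_0\kappa$ is strictly negative, which is precisely \eqref{cl}. (In particular $\kappa\neq 0$, since otherwise the equation would be linear in $r$ and would possess the root $r = W_0$.)

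The step I expect to be the main obstacle is not the algebra of the last paragraph but the global passage in the second: upgrading the purely local ruled representation furnished by \cite{GP} to one in which each leaf is a complete straight line lying inside $S$, so that $U=\mathscr{L}(\R\times J)$ is genuinely swept over all $r\in\R$. This is exactly where completeness, embeddedness, and the absence of boundary are indispensable, since one must exclude the possibility that a leaf leaves the surface in finite parameter or that distinct patches overlap; and it is precisely this global extension that converts the merely local nonvanishing of the characteristic quadratic into the uniform strict inequality \eqref{cl} valid on all of $J$.
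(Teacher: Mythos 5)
Your proposal is correct and follows essentially the same route as the paper: the foliation of $S$ by horizontal straight lines and its globalization via completeness, embeddedness and absence of boundary (Lemmas \ref{L:ruled}--\ref{locfol} and Corollary \ref{foliation}), the seed-curve parametrization \eqref{seedrep} built from a curve transverse to the ruling, and the reduction of the noncharacteristic hypothesis to the quadratic $\tfrac{\kappa(s)}{2}r^2 - r + W_0(s) = 0$ in $r$ having no real root, i.e.\ negative discriminant, which is exactly \eqref{cl}. The one step you delegate to \cite{GP} --- the $C^3$ regularity of $\gamma$ --- is carried out explicitly in the paper by a bootstrap along a nearby ruling line (differentiating the height along the reparametrized leaf to show $\kappa$ is differentiable), but this does not alter the substance of the argument, and your handling of the degenerate case $\kappa = 0$ is in fact slightly more careful than the paper's.
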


The proof of Theorem \ref{T:thma} will be presented after Corollary
\ref{foliation} below. We first develop some preparatory results.

\begin{Lem}\label{L:ruled}
Let $D\subset \R^2$ be an open set, $g\in C^2(D)$, and consider the
$C^2$ map $G:D \rightarrow \mathbb{H}^1$ given by
$G(x,y)=(x,y,g(x,y))$. Suppose that $S=G(D)$ is a non-characteristic
minimal surface. Then $S$ is foliated by horizontal straight lines
which are the integral curves of $\n_H^\perp = \qb X_1 - \pb X_2$.
\end{Lem}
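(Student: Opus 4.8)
The plan is to work entirely in the graph coordinates $(x,y)$ on $D$ and to show that the integral curves of the field $\nup = \qb X_1 - \pb X_2$ have vanishing Euclidean acceleration in all three coordinates; together with their manifest horizontality (they are integral curves of a horizontal vector field), this identifies them as horizontal straight lines and exhibits the claimed foliation.

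First I would record the normal data of the graph. Writing $S = \{F = 0\}$ with $F(x,y,t) = t - g(x,y)$ and taking the Riemannian normal $\bN = \nabla^R F = (X_1 F) X_1 + (X_2 F) X_2 + (TF) T$, a direct computation gives $p = \langle\bN,X_1\rangle = -g_x - \tfrac{y}{2}$, $q = \langle\bN,X_2\rangle = -g_y + \tfrac{x}{2}$, and $\omega = 1$. The non-characteristic hypothesis means $W = \sqrt{p^2 + q^2} > 0$ on all of $D$, so $\pb = p/W$ and $\qb = q/W$ are well-defined $C^1$ functions of $(x,y)$ alone. One checks at once that $\langle \nup, \bN\rangle = \qb\, p - \pb\, q = 0$, so $\nup$ is a horizontal unit field tangent to $S$; consequently any integral curve $\Gamma$ of $\nup$ lies on $S$ and is horizontal, and it remains only to prove that $\Gamma$ is a Euclidean line.

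The key reduction converts the minimality equation into a planar condition. Extending $\bN^H$ off $S$ by the $t$-independent formulas above, one has $\partial_t \pb = \partial_t \qb = 0$, so the $\partial_t$ terms in $X_1, X_2$ drop out and Proposition \ref{P:mc} gives $\mathcal H = X_1 \pb + X_2 \qb = \partial_x \pb + \partial_y \qb$. Thus minimality says precisely that the unit vector field $(\pb,\qb)\colon D \to S^1$ is divergence free in the Euclidean sense. The heart of the argument is then a short linear computation along $\Gamma$. Differentiating $\pb^2 + \qb^2 = 1$ in $x$ and in $y$ yields $\pb\,\partial_x \pb + \qb\,\partial_x \qb = 0$ and $\pb\,\partial_y \pb + \qb\,\partial_y \qb = 0$, while minimality gives $\partial_y \qb = -\partial_x \pb$. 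Along $\Gamma$ one has $\dot x = \qb$, $\dot y = -\pb$, hence $\tfrac{d}{d\tau}\pb = \qb\,\partial_x \pb - \pb\,\partial_y \pb$ and $\tfrac{d}{d\tau}\qb = \qb\,\partial_x \qb - \pb\,\partial_y \qb$. Substituting $\partial_y\qb = -\partial_x\pb$ into the second gives $\qb\,\partial_x\qb + \pb\,\partial_x\pb = \tfrac{1}{2}\partial_x(\pb^2+\qb^2) = 0$, and combining the $y$-derivative of the unit relation with the divergence-free relation makes the first vanish as well. Therefore $\pb$ and $\qb$ are constant along $\Gamma$.

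With $\dot x = \qb$ and $\dot y = -\pb$ now constant, we get $\ddot x = \ddot y = 0$; and since $\dot t = -\tfrac{y}{2}\qb - \tfrac{x}{2}\pb$, differentiating gives $\ddot t = -\tfrac{\dot y}{2}\qb - \tfrac{\dot x}{2}\pb = \tfrac{1}{2}(\pb\qb - \qb\pb) = 0$. Hence $\Gamma$ has zero acceleration, so it is a Euclidean straight line, and being horizontal it is a horizontal straight line tangent to $\nup$, as desired. I expect the only delicate point to be the bookkeeping in the reduction of $\mathcal H$ to the planar divergence — specifically, justifying the $t$-independent extension of $\bN^H$ so that the $\partial_t$ contributions in $X_1$ and $X_2$ genuinely vanish. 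Once that is settled, the remaining computation is purely algebraic and poses no real obstacle.
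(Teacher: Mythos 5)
Your proof is correct and follows essentially the same route as the paper: both reduce minimality to the Euclidean divergence-free condition $\pb_x + \qb_y = 0$ for the $t$-independent unit field $(\pb,\qb)$, and both combine this with the differentiated unit-length relations to show $\pb,\qb$ are constant along integral curves of the perpendicular field, which forces those curves to be straight lines. The only (cosmetic) difference is at the end: the paper lifts the planar line to the graph and verifies the $T$-component identity explicitly, whereas you conclude directly from $\ddot x = \ddot y = \ddot t = 0$ along the ambient integral curve.
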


\begin{proof}[\textbf{Proof}]
Writing $S$ as the level set $\phi(x,y,t) =
g(x,y)-t=0$ we have that
\[\n_H = \pb\;X_1+\qb\;X_2,\]
where
\[\pb=\frac{X_1 \phi}{\sqrt{(X_1 \phi)^2+(X_2 \phi)^2}},\; \qb=\frac{X_2 \phi}{\sqrt{(X_1 \phi)^2+(X_2 \phi)^2}}.\]
The reader should keep in mind here that
\begin{equation}\label{pq}
p = X_1 \phi = X_1g +\frac{y}{2} = g_x + \frac{y}{2},\ \ \ q = X_2
\phi = X_2g -\frac{x}{2} = g_y - \frac{x}{2}.
\end{equation}
We emphasize that the assumption that $S$ be non-characteristic is
equivalent to
\[
W = \sqrt{(X_1 \phi)^2+(X_2 \phi)^2 }\neq 0\ \ \ \ \text{on}\ D.
\] By Proposition \ref{P:mc} we see that assumption that $S$ be
minimal reads
\[X_1\pb + X_2\qb\ =\ 0,\]
which, using the fact that $\pb, \qb$ are independent of $t$, is
equivalent to
\begin{equation}\label{MSEp}
 div V =\pb_x+\qb_y\ =\ 0.
\end{equation}
Here, we view $V=\pb\partial_x+\qb\partial_y$ as a vector field on
$D$ and $div$ is the Euclidean divergence. We now claim that if
$c(s)=(c_1(s),c_2(s))\subset D$ is an integral curve in $D$ of
$V^\perp = \qb \partial_x - \pb
\partial_y$, then $C(s)=(c_1(s),c_2(s),g(c_1(s),c_2(s)))$ must be an
integral curve of $\n_H^\perp$ on $S$.  To see this suppose that
$c'(s) = V^\perp(c(s))$, which means $c_1'(s) = \qb(c(s)), c_2'(s) =
- \pb(c(s))$. Now from these equations and from \eqref{pq} one has
\begin{equation}\label{Tcomponent}
c_1'\left(g_x(c) + \frac{c_2}{2}\right) + c_2'\left(g_y(c) -
\frac{c_1}{2}\right) = \qb(c) p(c) - \pb(c) q(c) = \frac{q(c) p(c) -
p(c) q(c)}{W} = 0,
\end{equation}
where for simplicity we have omitted the variable $s$ when writing
$c$ instead of $c(s)$. Now,
\[
C'(s) = (c_1'(s),c_2'(s),g_x(c_1(s),c_2(s))c_1'(s) +
g_y(c_1(s),c_2(s)) c_2'(s)).
\]
Using the formula
\begin{equation}\label{passage}
a X_1 + b X_2 + c T = \left(a,b,c+ \frac{bx - ay}{2}\right),
\end{equation}
which allows to pass from the standard representation in terms of
the Cartesian coordinates in $\HH$ to that with respect to the
orthonormal basis $\{X_1,X_2,T\}$, we find
\begin{align}\label{horiz}
C'(s)&= c_1'(s) X_1(c(s)) +c_2'(s) X_2(c(s)) \\
& +\left (\nabla g(c_1(s),c_2(s)) \cdot
(c_1'(s),c_2'(s))+\frac{c_1'(s)c_2(s)-c_1(s)c_2'(s)}{2} \right ) T.
\notag\end{align}

From \eqref{Tcomponent} we conclude that the component of $C'(s)$
with respect to $T$ is identically equal to zero, and therefore
\[
C'(s) = c_1'(s) X_1(c(s)) + c_2'(s) X_2(c(s)) = \qb(c(s)) X_1(c(s))
- \pb(c(s))X_2(c(s)) = \n_H^\perp(c(s)).
\]
This proves the claim.

Since $V$ is a unit vector field, we have that
\begin{equation}\label{uniteqs}
\pb\ \pb_x+ \qb\ \qb_x = \pb\ \pb_y + \qb\ \qb_y = 0.
\end{equation}
Combining equations \eqref{uniteqs} and \eqref{MSEp}, we conclude
that if $c(s) = (c_1(s),c_2(s))$ is an integral curve of $V^\perp$,
with $c(0) = z = (x,y)\in D$, then \[ \frac{d}{ds} V^\perp(c(s))
\equiv 0\ ,
\] and therefore $V^\perp(c(s)) \equiv V^\perp(c(0)) = V^\perp(z)$. It follows that
\[
c(t) = z + s V^\perp(z), \] i.e., $c(s)$ is a segment of straight
line in $D^2$ passing through $z$. If we write $c(t)=(x + as,y+
bs)$, then
\[
C(s)=(x+as,y+bs,g(x+as,y+bs)),
\]
and so
\[
C'(s) = (a,b,a g_x + b g_y).
\]
At this point we note that the vanishing of the $T$ component in
\eqref{Tcomponent} now implies that
\[
\frac{d}{ds} g(x+as,y+bs) = g_x a + g_y b = -
\frac{c_1'(s)c_2(s)-c_1(s)c_2'(s)}{2} = \frac{bx-ay}{2}.
\]
This gives
\[
g(x+as,y + bs) = g(z) + \frac{bx-ay}{2}s,
\]
and therefore,
\[
C(s)=\left(x+as,y+bs,g(z) + \frac{bx-ay}{2}s\right),
\]
i.e., $C(s)$ is a straight line segment in $\HH$.

\end{proof}

\begin{Lem}\label{L:lemma2}  Suppose $S$
be a $C^2$ non-characteristic minimal surface such that no open
subset of $S$ may be written as a graph over the $xy$-plane. Then,
$S$ is a piece of a vertical plane and, hence, is foliated by
horizontal straight lines which are the integral curves of
$\n_H^\perp$.
\end{Lem}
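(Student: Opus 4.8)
The plan is to reduce the entire statement to a single scalar condition. First I would observe that $S$ fails to be a graph over the $xy$-plane near a point $g$ precisely when the vertical direction $T=\partial_t$ is tangent to $S$ at $g$: the projection $\pi\colon(x,y,t)\mapsto(x,y)$ restricted to $S$ is a local diffeomorphism near $g$ if and only if $\ker d\pi=\mathrm{span}\{T\}$ meets the two-dimensional space $T_gS$ trivially, i.e. $T\notin T_gS$. Since $\bN$ spans the one-dimensional Riemannian orthogonal complement of $T_gS$, we have $T\in T_gS$ iff $\langle T,\bN\rangle=0$, and by \eqref{pqw} this is exactly $\omega(g)=0$. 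Thus a neighborhood of $g$ in $S$ is a graph over the $xy$-plane iff $\omega(g)\neq0$. As $\omega$ is continuous, the set $\{\omega\neq0\}$ is open, and were it nonempty it would be an open subset of $S$ that is a graph over the $xy$-plane, contradicting the hypothesis. Hence $\omega\equiv0$ on $S$.

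Next I would exploit $\omega\equiv0$ geometrically. It says $T$ is everywhere tangent to $S$, so the integral curves of $T$, namely the vertical lines $s\mapsto(x,y,t+s)$, through points of $S$ remain in $S$. Therefore $S$ is ruled by vertical lines, i.e. locally a vertical cylinder $S=\{(x,y,t):\rho(x,y)=0\}$ over a $C^2$ planar curve, with $\rho$ independent of $t$ and $\nabla\rho\neq0$; the base curve is regular because $T_gS$ is two-dimensional and contains $T$, so its image under $d\pi$ is one-dimensional. In passing this re-confirms that such an $S$ is automatically non-characteristic, since $W=\sqrt{p^2+q^2}=|\nabla\rho|\neq0$.

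The decisive step is to feed the cylinder into the minimal surface equation. With the defining function $\rho$ one computes $p=X_1\rho=\rho_x$, $q=X_2\rho=\rho_y$, so by \eqref{bars} the quantities $\pb=\rho_x/W$ and $\qb=\rho_y/W$ depend only on $(x,y)$. Since $\pb,\qb$ are $t$-independent, $X_1\pb=\partial_x\pb$ and $X_2\qb=\partial_y\qb$, and Proposition \ref{P:mc} gives
\[
\mathcal H=X_1\pb+X_2\qb=\partial_x\!\left(\frac{\rho_x}{|\nabla\rho|}\right)+\partial_y\!\left(\frac{\rho_y}{|\nabla\rho|}\right)=\operatorname{div}\!\left(\frac{\nabla\rho}{|\nabla\rho|}\right),
\]
which is precisely the signed Euclidean curvature of the level curve $\{\rho=0\}$. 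Minimality $\mathcal H\equiv0$ then forces the base curve to have vanishing curvature, hence to be a straight line, so $S$ is a piece of a vertical plane. The expected main obstacle is exactly this identification of the horizontal mean curvature of a vertical cylinder with the planar curvature of its base curve, together with the verification that the projected curve is regular; the remaining steps are soft.

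Finally, for the foliation statement I would specialize to the vertical plane just produced. There $\pb,\qb$ are constant, so $\n_H^\perp=\qb X_1-\pb X_2$ is a unit horizontal field with constant coefficients; it is tangent to $S$ because $\langle\n_H^\perp,\bN\rangle=\qb\,p-\pb\,q=0$, and a direct integration shows that along its flow $x,y$ and $t$ are affine functions of the curve parameter, so its integral curves are horizontal straight lines. Being nonvanishing and tangent, $\n_H^\perp$ foliates $S$ by these horizontal lines, which is the asserted conclusion, in agreement with the ruling produced in Lemma \ref{L:ruled} for the complementary case of graphs over the $xy$-plane.
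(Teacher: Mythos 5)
Your proof is correct, and its skeleton coincides with the paper's: both reduce the no-graph hypothesis to the statement that $S$ is locally a vertical cylinder over a planar curve, and then use minimality to force that curve to be a straight line. The differences are in execution, and two are worth recording. First, you obtain the cylinder structure from $\omega \equiv 0$ (via the graph criterion) together with uniqueness of integral curves of $T$; the paper instead takes a local defining function $\phi$ and simply asserts that the hypothesis forces $\phi_t = 0$, hence $\phi(x,y,t) = \phi_0(x,y)$ — your flow argument is the cleaner justification of exactly that assertion. Second, in the minimality step the paper claims that $H$-minimality of the cylinder translates into the classical minimal surface equation $\operatorname{div}\bigl(\nabla \phi_0/\sqrt{1+|\nabla \phi_0|^2}\bigr) = 0$; as your computation shows, the accurate statement is $\operatorname{div}\bigl(\nabla \phi_0/|\nabla \phi_0|\bigr) = 0$ along the zero set, i.e. the vanishing of the Euclidean curvature of the base curve. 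Both equations yield $g''=0$ when the curve is written as $y = g(x)$, so the paper's conclusion stands, but your direct identification of $\mathcal H$ with the planar curvature is the correct mechanism and avoids the detour. You also verify the final foliation claim explicitly, which the paper leaves implicit. The one step the paper has that you omit is the (soft) globalization: your argument shows each point of $S$ has a neighborhood contained in some vertical plane, and one must still note, as the paper does in a single sentence, that since $S$ is connected and $C^2$, any two overlapping planar pieces lie in the same plane, so all of $S$ lies in a single vertical plane. Add that sentence and your proof is complete.
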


\begin{proof}[\textbf{Proof}]
Let $(x_0,y_0,t_0)\in S$ and let $U\subset \HH$ be an open
neighborhood  of $(x_0,y_0,t_0)$ such that $S\cap U = \{(x,y,t)\in
U\mid \phi(x,y,t) = 0\}$ for a $\phi\in C^2(U)$ having $\nabla
\phi\not= 0$  in $U$.  By the assumption that no open subset of $S$
may be written as a graph over the $xy$-plane, we see that it must
be $\phi_t=0$ in $U$. Then, $\phi(x,y,t)=\phi_0(x,y)$ in $U$ and
therefore $S\cap U$ is a portion of  a ruled surface over a curve
$c$ in the $xy$-plane. Furthermore, due to the special structure of
$\phi$ one easily recognizes that the assumption that $S$ be
$H$-minimal now translates into the fact that $\phi_0$ must satisfy
the classical minimal surface equation
\begin{equation*} div \left(\frac{\nabla
\phi_0}{\sqrt{1+|\nabla \phi_0|^2}}\right) = 0\ , \end{equation*} on
the open set $\tilde U = \pi(U)\subset \R^2$, where $\pi(x,y,t) =
(x,y)$. This equation is in fact equivalent to
\begin{equation}\label{lmse} (1 + \phi_{0,y}^2) \phi_{0,xx} - 2
\phi_{0,x} \phi_{0,y} \phi_{0,xy} + (1 + \phi_{0,x}^2) \phi_{0,yy}\
=\ 0\ .
\end{equation}
Since $\nabla \phi_0 \not= 0$ in $U$, by the Implicit Function
Theorem, we may locally describe the curve $c$ by either $y=g(x)$ or
$x=f(y)$. In the former case, we have $\phi_0(x,y) = y - g(x)$, and
thus \eqref{lmse} implies that $g''=0$. We conclude that there
exists an open set $V\subset \HH$ containing $(x_0,y_0,t_0)$ such
that $S\cap V$ is a piece of a vertical plane. The second case leads
to the same conclusion. By the assumption that $S$ be $C^2$ we now
conclude that if for two such different open sets $V_1, V_2$ one has
$V_1\cap V_2\not= \varnothing$, then the two corresponding portions
of planes $S\cap V_1$ and $S\cap V_2$ must be part of the same
plane. This completes the proof.

\end{proof}

In the next lemma we combine into a single result the two different
situations considered in Lemmas \ref{L:ruled} and \ref{L:lemma2}.

\begin{Lem}\label{locfol} Let $S$ be a $C^2$ minimal
surface in $\mathbb{H}^1$ with empty characteristic locus, and let
$p$ be a point in the interior of $S$ (in the relative topology).
Then, there exists a neighborhood $\Delta$ of $p$ in $S$ which is
foliated by horizontal straight line segments which are integral
curves of $\n_H^\perp$.
\end{Lem}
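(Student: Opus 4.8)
The plan is to reduce the statement to the two preceding lemmas by classifying points of $S$ according to the vertical component $\omega = \langle \bN, T\rangle$ of the Riemannian normal. Since $\Sigma(S) = \varnothing$, the horizontal Gauss map is defined on all of $S$, and $\n_H^\perp = \qb X_1 - \pb X_2$ is a $C^1$ tangent vector field that never vanishes, because $\pb^2 + \qb^2 = 1$. By the flow box theorem a neighborhood $\Delta$ of $p$ is then foliated by the integral curves of $\n_H^\perp$, and the entire content of the lemma is that each such leaf is a horizontal straight line segment. As an integral curve of a horizontal field is automatically horizontal, only the \emph{straightness} must be established, and I would verify it leaf by leaf.

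First I would dispatch the two regimes covered directly by the earlier lemmas. The set $A = \{q \in S : \omega(q) \neq 0\}$ is open, and near any $q \in A$ the transversality of $T = \partial_t$ to $T_qS$ lets us write $S$ as a graph $(x,y) \mapsto (x,y,g(x,y))$ over the $xy$-plane; Lemma \ref{L:ruled} then shows that the leaf of $\n_H^\perp$ through $q$ is a horizontal straight line. If instead $p$ lies in the interior of $\{\omega = 0\}$, a whole neighborhood of $p$ is vertical, so no open subset of it is an $xy$-graph, and Lemma \ref{L:lemma2} identifies that neighborhood with a piece of a vertical plane, which is again ruled by the integral curves of $\n_H^\perp$.

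The remaining, and genuinely delicate, case is $\omega(p) = 0$ with $p \in \overline{A}$, that is, $p$ is a limit of graph points. Here I would argue by continuity. I would pick $q_n \to p$ with $q_n \in A$; by Lemma \ref{L:ruled} the leaf through $q_n$ contains a horizontal straight segment $\ell_n \subset S$ in the direction $\n_H^\perp(q_n)$. Since $\n_H^\perp$ is continuous, $\n_H^\perp(q_n) \to \n_H^\perp(p)$, so the lines carrying the $\ell_n$ converge to the horizontal line $\ell$ through $p$ in the direction $\n_H^\perp(p)$; because $S$ is a $C^2$, hence locally closed, surface, a segment of $\ell$ about $p$ lies in $S$ and is precisely the leaf of $\n_H^\perp$ through $p$. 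Combined with the flow box foliation, this exhibits $\Delta$ as foliated by horizontal straight line segments.

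The main obstacle is exactly this last step: one must ensure that the segments $\ell_n$ do not shrink to a point as the $xy$-graph representation degenerates near the vertical direction where $\omega(p) = 0$. I would control this by working inside a fixed flow box about $p$ and using the transversality of the foliation to bound below the time for which each leaf remains in $\Delta$, so that the converging segments have uniformly bounded length; continuous dependence of the leaves on their base point then forces the limiting leaf through $p$ to be a non-degenerate straight horizontal segment, completing the argument.
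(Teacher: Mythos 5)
Your decomposition of $S$ into the open graph region $A=\{\omega\neq 0\}$, the interior of $\{\omega=0\}$, and the boundary points $\{\omega=0\}\cap\overline{A}$ is essentially the paper's own (there $S_1=\{\phi_t\neq 0\}$ and $S_2=\{\phi_t=0\}$ for a local defining function $\phi$), and your treatment of the first two regimes via Lemmas \ref{L:ruled} and \ref{L:lemma2} is correct. The genuine gap is exactly at the step you yourself flag as ``the main obstacle,'' and the fix you propose does not close it. A flow box around $p$ bounds from below the \emph{time for which each leaf stays in} $\Delta$, but that was never in question; what you need is a lower bound on the length over which the leaf through $q_n$ is known to be \emph{straight}, and straightness is only supplied by Lemma \ref{L:ruled} while the leaf stays inside a graph patch, i.e.\ inside $A$. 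Since $q_n\to p$ with $\omega(p)=0$, the points $q_n$ approach the boundary of $A$, and nothing in your argument prevents the maximal straight sub-segment of the leaf through $q_n$ from terminating on $\{\omega=0\}$ arbitrarily close to $q_n$. In that scenario your segments $\ell_n$ shrink to a point and the limit argument produces nothing; transversality of the foliation to the flow-box sections is irrelevant here, because the degeneration happens inside $\Delta$, not at its boundary.

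The paper closes this gap with structural input your proposal lacks. It first shows (after shrinking) that the degenerate set $S_2$ is either the whole patch --- the vertical plane case --- or a $C^1$ \emph{curve}, and then splits according to how the leaf $c_\epsilon$ through $p$ meets $S_2$. If $c_\epsilon\cap S_2$ has empty interior in $c_\epsilon$, straightness follows not by a limit from nearby leaves but by density along the leaf itself: an open dense union of straight sub-arcs together with uniqueness for the $C^1$ field forces $c_\epsilon$ to be a single segment. If instead $c_\epsilon\subset S_2$, then $S_2$ near $p$ \emph{is} an integral curve of $\n_H^\perp$, and this is what yields the missing length bound: a maximal straight segment $L$ in a complementary component $N_1$ cannot have an endpoint on $S_2$, since by uniqueness of solutions of the ODE it would then merge with $S_2$, contradicting $L\subset N_1$; hence $L$ must reach $\partial B_\epsilon$ and has length at least $2(\epsilon-\delta)$. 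Only with this uniform bound does the limiting argument you sketch go through. To repair your proof you would need both the local structure of $\{\omega=0\}$ (all of the patch, or a $C^1$ curve) and this uniqueness argument excluding termination of straight segments on that set; the flow-box bound cannot substitute for either.
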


\begin{proof}[\textbf{Proof}]
For every $p\in \overset{\circ}{S}$, there exists an open set
$U\subset \HH$ and a $\phi\in C^2(U)$ such that $\nabla \phi\not= 0$
in $U$ and $\Sigma = S\cap U = \{(x,y,t)\in U\mid \phi(x,y,t) =
0\}$. Let $S_1=\{(x,y,t)\in \Sigma | \phi_t(x,y,t) \neq 0\}$,
$S_2=\{(x,y,t)\in \Sigma | \phi_t(x,y,t) = 0\}$. Notice that, either
$\phi_t\equiv 0$ on $\Sigma$ and in such case $S_2 = \Sigma$ is a
vertical cylinder over a curve in the $xy$ plane, or there exists an
open set $V\subset \HH$ such that $S_2\cap V$ is a $C^1$ curve in
$\HH$.  In the former case we can invoke Lemma \ref{L:lemma2} to
conclude that $\Delta = \Sigma$ is foliated by horizontal straight
line segments which are integral curves of $\n_H^\perp$. We are thus
left with the case in which $S_1\not= \varnothing$. By shrinking
$\Sigma$ if necessary we can assume that $\Sigma=S_1 \cup S_2$,
where $S_2$ is a $C^1$ curve.

In our arguments, we consider integral curves of $\n_H^\perp$
passing through points on the surface $S$.  To make this notion
precise, we recall that as $S$ is a $C^2$ submanifold of
$\mathbb{H}^1 =\mathbb{R}^3$, every point $p \in S$ is contained in
a coordinate chart $i: D \subset \R^2 \ra S$ with $i \in C^2(D)$.
For any $C^1$ vector field, $U_0$, defined on $i(D)$, the integral
curve of $U_0$ passing through $q \in i(D)$ is simply $i(\gamma)$
where $\gamma \subset D$ is a solution to the initial value problem:
\begin{equation*}
\begin{split}
\gamma'(t)&=i^{-1}_*(U_0)(\gamma(t))\\
\gamma(0)&=i^{-1}(q).
\end{split}
\end{equation*}
Direct calculation then shows that
\[
\frac{d}{dt}i(\gamma)=i_*i^{-1}_*U_0(\gamma(t))=U_0(i(\gamma(t))),
\]
and $i(\gamma(0))=i(i^{-1}(q))=q$.  As $U_0$ (and hence $i^* U_0$)
is $C^1$, the standard theorems concerning solutions to ODE apply to
the integral curves of $U_0$ on $S$.  In particular, we may conclude
that given $q \in S$, there exists (at least for a short time) a
unique integral curve of $U_0$.  Similarly, we conclude that
integral curves of $U_0$ on $S$ have continuous dependence on
parameters.

By Lemma \ref{L:ruled}, each point in $S_1$ is contained in a
neighborhood which is foliated by straight line segments which are
integral curves of $\nu_H^\perp$. Thus, those portions of integral
curves of $\n_H^\perp$ contained in $S_1$ are at least piecewise
linear.  By the fact that $\n_H^\perp$ is $C^1$ and the uniqueness
of solutions to ode's, we must have that these portions of integral
curves are straight lines. We may extend each such  line segment
maximally within $S_1$. If a limit point of a maximally extended
line segment were in $S_1$, we could apply Lemma \ref{L:ruled} to
extend it further, violating the assumption that we had extended
maximally. Thus we conclude that the limit points of the line
segment are in $\partial S_1 \cup S_2$.

Consider $p \in S_2$ and let $c$ be the integral curve of
$\n_H^\perp$ with $c(0)=p$.  Let $B_\epsilon$ be the metric ball of
radius $\epsilon$ centered at $p$ and $c_\epsilon = c \cap
B_\epsilon$.  Then, there exists an $\epsilon >0$ sufficiently small
so that one of the following possibilities occurs:
\begin{enumerate}
\item  $c_\epsilon \cap S_2$ is closed and has no interior;
  \item $c_\epsilon \cap S_2$ is closed with nonempty interior and $p$ is in the
  interior;
 \item $c_\epsilon \cap S_2$ is closed with nonempty interior and $p$ is contained in the boundary of the interior of $c_\epsilon \cap S_2$.
\end{enumerate}

In the first case, $c_\epsilon \cap S_1$ is open and dense in
$c_\epsilon$.  By Lemma \ref{L:ruled}, every point in $c_\epsilon
\cap S_1$ is contained in an open line segment which is a subset of
$c_\epsilon$.  As $c_\epsilon \cap S_2$ is closed and is contained
in the boundary of $c_\epsilon \cap S_1$, we conclude that
$c_\epsilon$ is piecewise linear.  By the smoothness of $\n_h^\perp$
and the uniqueness of solutions to ODE, we conclude $c_\epsilon$ is
a single straight line segment.

In the second case, we may shrink $\epsilon$ so that $c_\epsilon
\cap S_2=c_\epsilon$ and $S_2$ divides $B_\epsilon \cap S$ into
exactly two pieces $N_1,N_2$.  We next show that if $q \in N_1$ is
contained in a line segment, $L \subset N_1$, which reaches the
boundary of $N_1$ then the length of $L$ is at least
$2(\epsilon-\delta)$ where $\delta$ is the Euclidean distance from
$p$ to $q$.  Observe that the endpoints of $L$ can not be in $S_2$.
If one were in $S_2$, then by the uniqueness of solutions of ODE, we
conclude that $L$ and $S_2$ coincide.  This contradicts our
assumption that $q \not \in S_2$.  Thus, $L$ must be a line segment
in $B_\epsilon$ which has both its boundary points in $\partial
B_\epsilon$.  By construction, the Euclidean distance from $p$ to
the endpoints of $L$ is $\epsilon$.  Denoting the Euclidean distance
from $p$ to $q$ by $\delta$, the triangle inequality implies that
the length of $L$ is at least $2(\epsilon -\delta)$.

Let $ q_i \in N_1$ be a sequence of points converging to $p$ and let
$L_i$ be the maximal line segment which is the integral curve of
$\n_H^\perp$ through $q_i$ which is contained in $N_1$.  By the
continuous dependence on parameters of the solutions to an ODE and
the fact the $\n_H^\perp$ is $C^1$, we know $L=\lim_{i \ra \infty}
L_i$ exists and is an integral curve of $\n_H^\perp$ passing through
$p$.  Moreover, since $L$ is the limit of lines segments each of
whose lengths are bounded below by $2(\epsilon - \delta_i)$ (where
$\delta_i$ is the Euclidean distance from $p$ to $q_i$), we conclude
$L$ is a line segment of length at least $2\epsilon$.  Note that so
far, we have shown that every point in $S_1$ and every point in
$S_2$ that fall in cases one and two are contained in an open line
segment which is an integral curve of $\n_H^\perp$.

We are left with points of $S_2$ which fall into the third category.
The collection of such points in $S_2$ is, by construction, closed
and has empty interior.  Thus, $c_\epsilon$ contains an open dense
set of points that are either in $S_1$ or fall in one of the first
two cases above.  For each such points, Lemma \ref{L:ruled} or the
discussion of the first two cases yields an open line segment
containing the point which is a subset of $c_\epsilon$.  Thus, as in
the argument for case one, $c_\epsilon$ is piecewise linear and, by
the smoothness of $\n_H^\perp$, must be a single straight line
segment.

Using the arguments above for points in $S_2$ and Lemma
\ref{L:ruled} for points in $S_1$, we see that integral curve of
$\n_H^\perp$ through any point contains a line segment through that
point.  Thus, all such integral curves are piecewise linear and, by
the smoothness of $\n_H^\perp$, must be straight lines.  Combining
all of these arguments shows that $\Sigma$ is foliated by straight
line segments which are integral curves of $\n_H^\perp$.

\end{proof}

\begin{Cor}\label{foliation}  Let $S$ be a $C^2$ connected complete
  non-characteristic minimal surface without boundary in
  $\mathbb{H}^1$.  Then, $S$ is foliated by horizontal straight lines
  which are integral curves of $\n_H^\perp$.
\end{Cor}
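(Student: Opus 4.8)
The plan is to globalize the local statement of Lemma \ref{locfol} using completeness together with the $C^1$ regularity of the horizontal normal. Since $S$ is a surface without boundary, every point $p\in S$ lies in its relative interior, so Lemma \ref{locfol} applies at each $p$ and produces a neighborhood of $p$ in $S$ that is foliated by horizontal straight line segments, each of which is an integral curve of $\n_H^\perp$. Because $S$ is $C^2$ and non-characteristic we have $W\neq 0$ everywhere, so $\pb=p/W$ and $\qb=q/W$ are $C^1$ on $S$ and $\n_H^\perp=\qb X_1-\pb X_2$ is a $C^1$ unit vector field there; even if $S$ should fail to be orientable, a change of local normal only replaces $\n_H^\perp$ by $-\n_H^\perp$, so the associated unoriented line field, and hence the foliation we seek, is globally well defined. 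Since $\n_H^\perp$ never vanishes its maximal integral curves are disjoint and cover $S$, and it remains only to show that each such curve is a single horizontal straight line.

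First I would fix $p\in S$ and let $c$ be the maximal integral curve of $\n_H^\perp$ with $c(0)=p$, defined on a maximal interval $(a,b)$. Since $\n_H^\perp$ has unit length, $c$ is parametrized by arc length and thus has finite length on every bounded subinterval. If $b<\infty$, then $c$ would have finite length on $(0,b)$, so by completeness of $S$ the limit $\lim_{s\to b^-}c(s)=q$ would exist in $S$; the flow of $\n_H^\perp$ could then be continued through $q$, contradicting the maximality of $b$. The same reasoning excludes $a>-\infty$, so $(a,b)=\R$ and $c$ is defined for all $s\in\R$.

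The remaining and most delicate step is to upgrade the local conclusion of Lemma \ref{locfol} to a global one along the whole orbit $c(\R)$. For every $s_0\in\R$, Lemma \ref{locfol} applied at $c(s_0)$ shows that near $s_0$ the curve $c$ coincides with the integral curve of $\n_H^\perp$ through $c(s_0)$, which by Lemmas \ref{L:ruled} and \ref{L:lemma2} is a Euclidean straight line segment in $\HH\cong\R^3$. Consequently the velocity $c'(s)$ is constant on a neighborhood of each $s_0$; that is, $c'$ is locally constant. A locally constant map on the connected interval $\R$ is constant, so $c'(s)\equiv c'(0)$ and hence $c(s)=c(0)+s\,c'(0)$ is a single complete horizontal straight line.

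Putting these pieces together, the maximal integral curves of $\n_H^\perp$ are the leaves of a one-dimensional foliation of $S$, and each leaf is a complete horizontal straight line, which is exactly the asserted conclusion. I expect the main obstacle to be the globalization carried out in the third paragraph: Lemma \ref{locfol} only yields straight segments on neighborhoods of uncontrolled size, so the crux is to propagate straightness across the entire orbit. This is precisely where the $C^1$ regularity of $\n_H^\perp$ (forcing $c'$ to be continuous and locally constant, hence constant) and the completeness of $S$ (forcing the orbit to have infinite extent) must be used in tandem.
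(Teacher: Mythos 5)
Your proof is correct, but it organizes the globalization of Lemma \ref{locfol} along a genuinely different route than the paper does. The paper fixes one local leaf through a point, extends it to a full Euclidean line $\tilde{L}$ in the ambient $\R^3$, and runs an open--closed argument on the parameter set $I=\{t\in\R \mid \tilde{L}(t)\in S\}$: completeness of $S$ makes $I$ closed, Lemma \ref{locfol} together with $\partial S=\varnothing$ makes $I$ open, and connectedness of $\R$ gives $I=\R$, i.e., the entire ambient line lies in $S$. You instead never leave $S$: you first prove that the maximal integral curve $c$ of $\n_H^\perp$ is defined on all of $\R$ (an escape-lemma argument from unit speed plus metric completeness), and only afterwards prove straightness, since Lemma \ref{locfol} and uniqueness for the $C^1$ field $\n_H^\perp$ make $c'$ locally constant, hence constant. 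The ingredients are identical -- Lemma \ref{locfol} at every point (which is where the no-boundary hypothesis enters), ODE uniqueness, completeness, connectedness of $\R$ -- but your ordering handles cleanly two points the paper leaves implicit: the Cauchy/completeness step happens directly in the intrinsic metric of $S$ because your curve lies in $S$ by construction, and the leaf of the local foliation through $c(s_0)$ coincides with your curve automatically by uniqueness; in the paper's openness step one must additionally observe that tangency of $\tilde{L}$ to $\n_H^\perp$ propagates to limit points of (a connected component of) $I$, so that the local leaf through such a point is a sub-segment of $\tilde{L}$ rather than a line transverse to it. What the paper's formulation buys in exchange is that the assertion actually needed later -- that the whole line through each point of $S$ is contained in $S$ -- is stated and tracked explicitly, whereas in your write-up it emerges only at the end as the identity $c(s)=c(0)+s\,c'(0)$. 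Two small remarks: the straightness of the local leaf in the general case is Lemma \ref{locfol} itself (Lemmas \ref{L:ruled} and \ref{L:lemma2} cover only the two pure cases), and your claim that $c'$ is constant on each straight sub-segment tacitly uses that a horizontal line parametrized at unit Riemannian speed has constant Euclidean velocity; this is immediate from \eqref{passage}, since the $T$-component of $c'$ vanishes along such a line, but it deserves a sentence.
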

\noindent

\begin{proof}[\textbf{Proof}]
Since $S$ is assumed to have no boundary, for any $p \in S$ Lemma
\ref{locfol} implies that there exists an open neighborhood of $p$
which is foliated by such straight line segments.  By the smoothness
of $\n_H^\perp$, we have that $S$ itself is foliated by such
straight line segments.  It remains to show that the entirety of
each line is contained in $S$.

Let $L:(-\epsilon,\epsilon) \ra S$ be a line segment with $L(0)=p
\in S$ and $L'(t)=\n_H^\perp(L(t))$ and let $\tilde{L}:\mathbb{R}
\ra \mathbb{H}^1$ be the full line containing $L$ so that
$\tilde{L}(t)=L(t)$ for $t \in(-\epsilon,\epsilon)$.  Let \[ I =
\{t\in \R\mid  \tilde{L}(t)\in S\}\ . \] By construction, $I$ is not
empty since $0\in I$. Let $t_i \in I$ be a sequence of parameters so
that $t_i \ra t_\infty$ where $t_\infty $ is a limit point of $I$.
By completeness of $S$, we must have that $\lim_{i \ra
\infty}\tilde{L}(t_i) = \tilde{L}(t_\infty)$ is an element of $S$.
Thus, $I$ is closed as it must contain all of its limit points. But,
$I$ is open as well.  To see this, consider $p=\tilde{L}(t)$ for a
fixed $t \in I$.  As $\partial S = \varnothing$, $p$ is in the
interior of $S$ and so, by Lemma \ref{locfol}, $p$ is contained in a
neighborhood which is foliated by straight lines which are integral
curves of $\n_H^\perp$.  Thus, $I$ must contain an open neighborhood
of $t$. Since $I$ is both open and closed, we conclude that $I =
\mathbb{R}$ and that $\tilde{L}(\R) \subset S$.

\end{proof}

\noindent
\begin{proof}[\textbf{Proof of Theorem \ref{T:thma}}]
By Corollary \ref{foliation}, we have that $S$ is foliated by
horizontal straight lines which are integral curves of $\n_H^\perp$.
Let $O$ be an open neighborhood of $g_0$ which may be written as a
graph $(x,y,h(x,y))$ with $h \in C^2$. Consider a unit tangential
vector field, $\mathcal{W}$, defined on $O$ which is perpendicular
(with respect to the fixed Riemannian metric) to $\n_H^\perp$.  Let
$(\gamma_1(s),\gamma_2(s),h_0(s))$ be an integral curve of
$\mathcal{W}$ so that $\gamma(0)=g_0$ with domain $J$.  Note that
$\gamma_1,\gamma_2,h_0 \in C^2(J)$ as $\n_H^\perp$ is $C^1$. Let $N$
be the collection of lines in the foliation which pass through point
of the curve $(\gamma_1(J),\gamma_2(J),h_0(J))$.  Then, since for a
fixed $s_0 \in J$, we have from \eqref{passage}
\begin{align*}
\mathscr{L}_{s_0}'(r)& =(\gamma_2'(s_0),-\gamma_1'(s_0),
-\frac{1}{2}(\gamma_1(s_0),\gamma_2(s_0))\cdot(\gamma_1'(s_0),\gamma_2'(s_0)))
\\
& = \gamma_2'(s_0)\;X_1-\gamma_1'(s_0)\;X_2 = \n_H^\perp,
\end{align*}
the line of the foliation passing through
$(\gamma_1(s_0),\gamma_2(s_0),h_0(s_0))$ is given by
\[\mathscr{L}_{s_0}(r)=(\gamma_1(s_0)+r\gamma_2'(s_0),\gamma_2(s_0)-r\gamma_1'(s_0),h_0(s_0)-\frac{r}{2}(\gamma_1(s_0),\gamma_2(s_0))\cdot(\gamma_1'(s_0),\gamma_2'(s_0)))\]
Thus, $N$ may be parametrized by $\mathscr{L}:\R \times J \ra
\mathbb{H}^1$ given by
\begin{equation}\label{T3.4-eq1}
\mathscr{L}(r,s) = (
\gamma_1(s)+r\gamma_2'(s),\gamma_2(s)-r\gamma_1'(s),h_0(s)-\frac{r}{2}\gamma(s)\cdot\gamma'(s)).
\end{equation}
It remains to show that $\gamma=(\gamma_1,\gamma_2) \in C^3(J)$.  As $O$ is a graph over a region $\bar{O}$ of the xy-plane,  $\bar{\mathscr{L}}(r_0,s)=(\gamma_1(s)+r\gamma_2'(s),\gamma_2(s)-r\gamma_1'(s))$ parametrizes  a subset of $\bar{O}$ with $s \in J, r \in (-\epsilon,\epsilon)$ for $\epsilon$ sufficiently small.  Under this parametrization, $V=\pb \;\partial_x+\qb \;\partial_y = \gamma_1'(s) \;\partial_x+\gamma_2'(s)\; \partial_y$.  We
first observe that, for a fixed $r=r_0$, the curve $s\to
\bar{\mathscr{L}}(r_0,s)$ coincides with the integral curve of
$V$ through the point $\bar{\mathscr{L}}(r_0,0)$ on their
mutual domain of definition (we may assume, by shrinking $J$ if
necessary, that $J$ is the mutual domain of definition). To see
this, note that the definition of $\bar{\mathscr{L}}$ gives
\[
\bar{\mathscr L}_s(r,s) = (\gamma_1'(s) + r\gamma_2''(s),\gamma_2'(s) - r\gamma_1''(s))
\]
This implies
\[\langle \bar{\mathscr{L}}_s(r_0,s),V^\perp \rangle= \gamma_2'\gamma_1'+r\gamma_2'\gamma_2''-\gamma_1'\gamma_2'+r \gamma_1''\gamma_1'=0.\]
The last equality follows from the fact that $|\gamma'|\equiv 1$ on
$J$.   Let $\bar{c}\subset \mathbb{R}^2$ be the integral curve of $V$ passing through
$\bar{\mathscr{L}}(r_0,0)$. We note that $\bar{c}$ is parameterized by
arc-length and, to avoid confusion, we will denote its parameter by
$\xi$.  Since $V$ is $C^1$, we have that $\bar{c} \in C^2(\xi)$.
Moreover, since $O$ is given by $(x,y,h(x,y))$ with $h \in C^2$, we
see that $c(\xi) = h(\bar{c}(\xi))$ is $C^2(\xi)$ as well.

To facilitate our computations, we note that
\[
|\bar{\mathscr{L}}_s(r_0,s)|=|1-r_0 \kappa(s)|. \] This can be
verified as follows. Recalling that $|\gamma'| = 1$ and that $\kappa
= \gamma_1'' \gamma_2' - \gamma_2'' \gamma_1'$, one easily obtains
\[
|\bar{\mathscr{L}}_s(r_0,s)|^2 = 1 - 2 r \kappa(s) +
r^2(\gamma_1''(s)^2 + \gamma_2''(s)^2).
\]
Now, some elementary considerations give
\[
\kappa(s)^2 = ((\gamma_1''(s)^2 + \gamma_2''(s)^2)|\gamma'(s)|^2 - 2
(\gamma'(s)\cdot \gamma''(s))^2 = (\gamma_1''(s)^2 +
\gamma_2''(s)^2),
\]
and this implies the desired conclusion. Let now $\kappa_0=
\underset{s\in J}{\sup} |\kappa(s)|$. If $\kappa_0=0$, then $\gamma$
is a line segment and hence $\gamma$ is certainly $C^3$. Assuming
$\kappa_0
>0$, we pick $r_0< \min \{\kappa_0^{-1},\epsilon\}$ which implies that
$|\bar{\mathscr{L}}_s(r_0,s)|=|1-r_0 \kappa(s)|=1-r_0\kappa(s)$.  We note
that $\xi$ is differentiable in $s$ as $\bar{c}(\xi)$ is the
reparameterization by arclength of $\bar{\mathscr{L}}(r_0,s)$ and that
$\frac{d\xi}{ds}=1-r_0\kappa(s)$. Similarly,
\[\frac{ds}{d\xi} = \frac{1}{1-r_0\kappa(s)}\]
which, by our choice of $r_0$, is equal to $\sum_{n=0}^\infty
(r_0\kappa(s))^n$. Next, we compute
\begin{equation*}
\begin{split}
c'(\xi)  = \frac{d}{d \xi} h(\bar{c}(\xi)) &= \frac{\partial}{\partial s} ( h(\gamma_1(s)+r\gamma_2'(s),\gamma_2(s)-r\gamma_1'(s))) \frac{ds}{d\xi}  \\
&= \frac{\partial}{\partial s} \left ( h_0(s) - \frac{r_0}{2}\gamma(s) \cdot \gamma'(s) \right ) \frac{1}{1-r_0\kappa(s)} \\
&= \left ( h_0'(s) -\frac{r_0}{2} -\frac{r_0}{2} \gamma(s)\cdot \gamma''(s) \right ) \frac{1}{1-r_0\kappa(s)}\\
&= \left ( h_0'(s) -\frac{r_0}{2} -\frac{r_0}{2} \gamma(s)\cdot \gamma''(s) \right ) \left ( \sum_{n=0}^\infty (r_0\kappa(s))^n \right )\\
&= h_0'(s) +r_0\alpha(s)+r_0^2\kappa(s)\alpha(s)+ r_0^3\kappa(s)^2\alpha(s) + \dots
\end{split}
\end{equation*}
where $\alpha(s)=-\frac{1}{2} -\frac{1}{2}\gamma(s)\cdot \gamma''(s) + \kappa(s) h_0'(s)$.  At this point we can make some simplifications.  First, we note that as $\kappa(s) = \gamma'' \cdot (\gamma')^\perp$, and  $\gamma'\cdot \gamma'' =0$ (as $|\gamma'(s)|=1$), we have

\[
\gamma''(s)= \kappa(s) (\gamma'(s))^\perp
\]
So, letting $\beta(s)= - \frac{1}{2}\gamma \cdot (\gamma'(s))^\perp + h_0'(s)$,we rewrite $\alpha(s) = -\frac{1}{2} +\kappa(s) \beta(s)$.  Moreover,
\begin{equation*}
\begin{split}
r_0\alpha(s)+r_0^2\kappa(s)\alpha(s)&+ r_0^3\kappa(s)^2\alpha(s) + \cdots
\ =\
r_0\alpha(s) \left ( \sum_{n=0}^\infty (r_0 \kappa(s))^n \right ) \\
&\ =\ \frac{r_0\alpha(s)}{1-r_0\kappa(s)} \\
& \ =\ -\,\left(\frac{r_0}{2} \frac{1}{1-r_0\kappa(s)} - \beta(s) \frac{r_0\kappa(s)}{1-r_0\kappa(s)}\right) \\
&\ =\ -\,\left(\frac{r_0}{2} \frac{1}{1-r_0\kappa(s)} + \beta(s) - \frac{\beta}{1-r_0\kappa(s)}\right) \\
&\ =\
-\,\left(\beta(s) + \frac{r_0 - 2\,\beta(s)}{1 - r_0\,\kappa(s)}\right)\ .
\end{split}
\end{equation*}

We conclude that
\[
c'(\xi)\ =\ h_0'(s)  - \beta(s) - \frac{1}{2} \frac{r_0-2\beta(s)}{1-r_0\kappa(s)}
\]
Since $c'(\xi)$ is again differentiable in $\xi$ and $\xi$ is differentiable in $s$, we conclude, by the chain rule, that $c'(\xi)$ is also differentiable in $s$.  Noting that $h_0'(s)$ and $\beta(s)$ are once differentiable in $s$, we conclude that $(1-r_0\kappa(s))^{-1}$, and hence $\kappa(s)$, is differentiable in $s$.  But, since $\gamma''(s) = \kappa(s) (\gamma'(s))^\perp$, $\gamma''(s)$ is differentiable and hence $\gamma \in C^3(s)$.

Lastly, we examine the impact of the assumption that $S$ contains no
characteristic points on the neighborhood $N$.  Using the parametrization derived above, we
see that the tangent space is spanned by $\nu_H^\perp$ and
\[
\hat{W}=(\gamma_1'(s)+r\gamma_2''(s))\;X_1+(\gamma_2'(s)-r\gamma_1''(s))\;X_2
+
(W_0(s)-r+\frac{r^2}{2}\kappa(s))\;T\]
where, as in the statement of the Theorem, we let
$W_0(s)=h_0'(s)+\frac{1}{2}\gamma'\cdot\gamma^\perp$ and
$\kappa(s)=\gamma''\cdot(\gamma')^\perp$.  $S$ will have a
characteristic point when $<\hat{W},T>=0$,
i.e. when $r=\frac{1\pm\sqrt{1-2W_0(s)\kappa(s)}}{2W_0(s)}$.  Thus, $S$ is
  noncharacteristic if and only if $1-2W_0(s)\kappa(s)<0$.

\end{proof}

Note that, without loss of generality (by simply reparametrizing
$\gamma$), we may assume that any fixed $s \in J$ may be treated as
$s=0$. We will use such a normalization and assume that $J$ is a
neighborhood of $0$.

We wish to examine the behavior of this patch with respect to the notion
of an $X_1$ graph.  Consider the following definitions.

\begin{Def}\label{D:pm} Let $C_1(x_0,y_0,t_0)$ denote the integral curve
of the vector field $X_1$ passing through the point $(x_0,y_0,t_0)$.  In other words,
\[
C_1(x_0,y_0,t_0)=\left \{\left(x_0+r,y_0,t_0-\frac{y_0}{2}\,r\right)\,\Bigl|\, r \in \R\right \}\ .
\]
\end{Def}

Using Definition \ref{D:pm} we next introduce the notion of intrinsic projection of a point to the plane $x=0$.

\begin{Def}\label{D:ipm}  We define the \emph{intrinsic projection map}
\[\Pi(x_0,y_0,t_0)=\{(0,y,t)\} \cap C_1(x_0,y_0,t_0)=(0,y_0,t_0+y_0x_0/2)\ .
\]
\end{Def}

The following equation follows directly from the definition.

\begin{equation}\label{parabola}
\Pi \circ \mathscr{L}(r,s)\ =\ (0,\gamma_2(s)-r\gamma_1'(s),h_0(s)+\frac{1}{2}\gamma_1(s)\gamma_2(s)-r\gamma_1(s)\gamma_1'(s)-\frac{r^2}{2}\gamma_1'(s)\gamma_2'(s))
\end{equation}

\begin{Lem}\label{lemma1}  Let $S$ be a portion of an $H$-minimal
  surface parameterized by a seed curve/height function pair
  $(\gamma(s),h_0(s))$ via \eqref{seedrep} with $r \in \R$, $s\in I$.
  Let $P(s,r)=\Pi \circ \mathscr{L}(r,s)$ be given as in \eqref{parabola}.  There exists
  an interval $J\subset I$ containing so that $P:\R \times J \subset\R^2_{(r,s)} \ra
  \R^2_{(y,t)}$ is a one-to-one $C^2$ diffeomorphism onto its image.
\end{Lem}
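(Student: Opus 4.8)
The plan is to prove that $P$ is an injective local $C^2$ diffeomorphism on $\R\times J$ for a suitably small $J$; since an injective local diffeomorphism is open and a homeomorphism onto its (open) image, this immediately yields that $P$ is a $C^2$ diffeomorphism of $\R\times J$ onto $\Om=P(\R\times J)$. The first preliminary point is that the intrinsic projection $\Pi$ is taken along the $X_1$--orbits, so $P$ degenerates exactly where the ruling line $\mathscr L_s$ is itself an integral curve of $X_1$, i.e. where $\gamma_1'(s)=0$ (there $\n_H^\perp=\gamma_2'X_1-\gamma_1'X_2$ is parallel to $X_1$ and the whole line collapses to a point). Now noncharacteristicity rules this out near the base point: \eqref{cl} forces $\kappa(s)\neq 0$ throughout, and in particular $\gamma_1'\not\equiv 0$ (if $\gamma_1'\equiv 0$ then $\gamma_1$ is constant and $\kappa\equiv 0$). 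After relabelling the base point as $s=0$ we may thus assume $\gamma_1'(0)\neq 0$, and by continuity shrink $I$ to an interval $J$ on which $\gamma_1'$ is bounded away from $0$.

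The second step is the Jacobian computation. Differentiating \eqref{parabola} and simplifying (using $|\gamma'|=1$) one obtains the clean identity
\[
\det J_P(r,s)\ =\ -\,\gamma_1'(s)\left(W_0(s)-r+\frac{r^2}{2}\kappa(s)\right),
\]
whose second factor is precisely the $T$--component $\langle\hat W,T\rangle$ of the tangent field $\hat W$ from the proof of Theorem \ref{T:thma}. As a quadratic in $r$ it has discriminant $1-2W_0(s)\kappa(s)$, which is negative by \eqref{cl}, so it never vanishes; together with $\gamma_1'\neq 0$ on $J$ this gives $\det J_P\neq 0$ on all of $\R\times J$. Since $\gamma\in C^3$ and $h_0\in C^2$, the map $P$ is $C^2$, hence a local $C^2$ diffeomorphism.

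The third and decisive step is global injectivity, which I would establish by passing to the coordinates of Definition \ref{intdeltastrip}. Because $\gamma_1'\neq 0$, the substitution $u=\gamma_2(s)-r\gamma_1'(s)$ replacing $r$ is a $C^2$ diffeomorphism of $\R\times J$ onto itself, and in the $(u,s)$ variables $P$ takes exactly the form $\Psi(u,s)=\bigl(u,\ G(s)\frac{u^2}{2}+F(s)u+\sigma(s)\bigr)$ of \eqref{if}, with $G=-\gamma_2'/\gamma_1'$ and explicit $C^2$ functions $F,\sigma$. Comparing the two expressions for $\det J_P$ (the one above and $\det J_\Psi\cdot(-\gamma_1')$ from \eqref{jac}) and tracking the discriminant under the affine substitution $u=\gamma_2-r\gamma_1'$ yields
\[
1-2W_0(s)\kappa(s)\ =\ (\gamma_1'(s))^2\bigl(F'(s)^2-2\sigma'(s)G'(s)\bigr),
\]
so the noncharacteristic inequality \eqref{cl} is \emph{equivalent} to the strict graphical--strip inequality \eqref{nd} for $F,G,\sigma$. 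Injectivity of $P$ is now equivalent to injectivity of $\Psi$: for fixed $s$ the first coordinate $u$ determines $r$, while if $\Psi(u,s)=\Psi(u,s')$ with $s\neq s'$ then $(G(s)-G(s'))\tfrac{u^2}{2}+(F(s)-F(s'))u+(\sigma(s)-\sigma(s'))=0$. Since $G'=\kappa/(\gamma_1')^2\neq 0$, $G$ is strictly monotone, so this is a genuine quadratic in $u$; its discriminant is $(F(s)-F(s'))^2-2(G(s)-G(s'))(\sigma(s)-\sigma(s'))$, whose value on the diagonal $s=s'$ equals $(\gamma_1')^{-2}(1-2W_0\kappa)<0$. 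A Cauchy mean--value/divided--difference argument in the spirit of \eqref{3}--\eqref{4} of the catenoid discussion in Subsection \ref{SS:catenoid}, after possibly shrinking $J$, keeps this discriminant strictly negative, so the quadratic has no real root, the projected parabolas are pairwise disjoint, and $P$ is one--to--one on $\R\times J$.

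The main obstacle I expect is precisely this global injectivity: the nonvanishing of the Jacobian is a clean local computation, but excluding intersections of two distinct projected parabolas requires the \emph{uniform} strict--discriminant estimate, and this is where one must shrink $J$ and invoke the mean--value argument. The delicate point is that the strict inequality \eqref{nd}, not merely its non--strict version, is needed, and this strictness is supplied exactly by the noncharacteristic hypothesis \eqref{cl} through the displayed discriminant identity.
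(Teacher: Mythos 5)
Your proposal is correct, and its skeleton coincides with the paper's: the paper also reduces to an interval where $\gamma_1'\neq 0$ (arguing, as you do, that $\gamma_1'\equiv 0$ would force $\kappa\equiv 0$, contradicting \eqref{cl}), factors $P=\Pi\circ\mathscr{L}$ through the same substitution $u=\gamma_2(s)-r\gamma_1'(s)$ (the map $\zeta$), and lands on the same $F,G,\sigma$ of \eqref{FGsigma} with the negativity of $F'^2-2\sigma'G'$ coming from \eqref{cl}. The genuine divergence is in the final injectivity step. The paper does \emph{not} compare two distinct parabolas: it fixes $u$ and observes that $\partial v/\partial s=\sigma'(s)+F'(s)u+\tfrac{G'(s)}{2}u^2$ is a quadratic in $u$ with negative (one-point) discriminant, hence never vanishes, so $s\mapsto v(u,s)$ is strictly monotone for each fixed $u$; injectivity of $\Psi$ follows at once, with no further shrinking of $J$ and no divided-difference estimates. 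Your route instead excludes intersections of distinct parabolas via the two-point discriminant $(F(s)-F(s'))^2-2(G(s)-G(s'))(\sigma(s)-\sigma(s'))$, made uniformly negative (after normalizing by $(s-s')^2$ --- note the unnormalized quantity literally vanishes on the diagonal, a small imprecision in your wording) by continuity of divided differences and a shrink of $J$; this is exactly the style of the paper's catenoid computation in Subsection \ref{SS:catenoid}, and it is correct since the lemma permits shrinking $J$, but it is heavier than the paper's monotonicity trick. Two of your side computations are worth keeping: the closed form $\det J_P(r,s)=-\gamma_1'(s)\bigl(W_0(s)-r+\tfrac{r^2}{2}\kappa(s)\bigr)$, identifying the degeneracy locus of $P$ with the characteristic condition, does not appear in the paper; and your identity $1-2W_0\kappa=(\gamma_1')^2\bigl(F'^2-2\sigma'G'\bigr)$ is in fact the correct one --- the paper's displayed identity $F'^2-2\sigma'G'=1-2W_0\kappa+(|\gamma'|^2+1)(|\gamma'|^2-1)$ is missing the factor $(\gamma_1')^{-2}$, though since $|\gamma'|=1$ and $(\gamma_1')^2>0$ this discrepancy has no effect on the sign conclusion either argument needs.
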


\begin{proof}[\textbf{Proof}]
The following properties of the seed curve $\gamma: I \to \R^2$ are essential to our proof.  We gather them here for the sake of convenience.
\begin{itemize}
\item[(i)] $|\gamma'(s)| = 1$.
\item[(ii)] $1 - 2W_0(s)\kappa(s) < 0$.
\item[(iii)]There exists an interval $J\subset I$ such that for all $s\in J$, $\gamma_1'(s) \neq 0$.
\end{itemize}
Properties (i), (ii) and the definitions of $W_0$ and $\kappa$ were establish in Theorem \ref{T:thma}.
Suppose (iii) is not true, then together with (i) we would have $\gamma'(s) = (0,1)1$ for all $s\in I$.  This would implies $\kappa(s) = \gamma''(s)\cdot \gamma'(s)^\perp$ vanishes identically on $I$ and hence (ii) would not be possible.  Therefore, by the continuity of $\gamma_1'$, we can extract a sub-interval $J$ of $I$ on which $\gamma_1'(s) \neq 0$.   To continue we define two auxilary functions $\zeta$ and $\Psi$ by means of $\gamma$ as follows.

\begin{align*}
\zeta:\R\times J \to \R^2\ , \qquad \zeta(r,s) &\ =\ (\gamma_2(s) - r\,\gamma_1'(s), s)\ , \\
\Psi: \zeta(\R\times J)\to \R^2\ ,\qquad (u,v) = \Psi(u,s) &\ =\ \left(u,\sigma(s) + F(s)\,u + \frac{G(s)}{2}\,u^2\right)\ .
\end{align*}
where $F,G,\sigma : J \to \R$ is given by

\begin{align}\label{FGsigma}
F(s) &\ =\ \gamma_1(s) + \frac{\gamma_2(s)\gamma_2'(s)}{\gamma_1'(s)} = \frac{\gamma \cdot \gamma'}{\gamma_1'}\\
\notag
G(s) &\ =\ -\,\frac{\gamma_2'(s)}{\gamma_1'(s)} \\
\notag
\sigma(s) &\ =\ h_0(s) - \frac{1}{2} \gamma_2(s)\,F(s)\ .
\notag
\end{align}
Due to property (iii) above and the the fact that $\gamma \in C^3(I)$, the functions $\zeta, \Psi, F, G, \sigma$ are well defined and are $C^2(J)$.  One can verify by a straight forward computation that
\[
\Pi \circ \mathscr{L}(r,s) \ =\ \Psi \circ \zeta(r,s)\ .
\]
Therefore, if we show that $\Psi \circ \zeta: \R \times J \to \R^2$ is one one then $\Pi\circ \mathscr{L}$ is also one one.  To this end, we will show separately that both $\zeta$ and $\Psi$ are one to one.  The fact that $\zeta$ is one one is easy to verify and follows from the fact that $\gamma_1'(s) \neq 0$ on $J$.  We also note that
\[
\zeta(\R\times J)\ =\ \R\times J\ .
\]
To show that $\Psi$ is one to one, we first consider its second component:
$v(u,s) = \sigma(s) + F(s)u + \frac{G(s)}{2}u^2$.  We have
\[
\frac{\partial}{\partial s} v(u,s)\ =\
\sigma'(s) + F'(s)\,u + \frac{G'(s)}{2}\,u^2\ .
\]
Although it is tedious, nevertheless one can verify by straight forward computations that the following identity holds for any $s\in J$ and any $u\in \R$:

\[
F'(s)^2 - 2\sigma'(s)G'(s)\ =\ 1 - 2W_0(s)\kappa(s) + (|\gamma'(s)|^2 + 1)(|\gamma'(s)|^2 - 1)\ <\ 0\ .
\]
The strict inequality above is due to properties (i) and (ii) of $\gamma$. This in turn implies that the quadratic expression in $u$
\[
\frac{\partial}{\partial s} v(u,s)\ =\
\sigma'(s) + F'(s)\,u + \frac{G'(s)}{2}\,u^2
\]
do not vanish for any fixed $u\in\R$ and any $s\in J$.  Hence we have
\[
\left|\frac{\partial}{\partial s} v(u,s)\right| > 0\ ,\ s\in J
\]
that is, $v(u,s)$ is monotone in $s$ for any fixed $u\in\R$.
We infer from this fact and the definition of $\Psi$ that $\Psi$ is one one.  This completes the proof.
\end{proof}

Several important facts about the functions $F,G,\sigma,\Psi$ were established in the proof of Lemma \ref{lemma1} we single them out here for references.

\begin{Pro}\label{P:Plemma1}
The functions $F,G,\sigma$ satisfy

\begin{equation}\label{E:nonchar}
F'(s)^2 - 2\sigma'(s)G'(s)\ <\ 0\ .
\end{equation}
The function $\Psi:\R\times J\to \R^2$ is invertible on its image.
We let $(u,s) = \Psi^{-1}(u,v)$.
In particular, $s = s(u,v)$ is the second component of $\Psi^{-1}$.
\end{Pro}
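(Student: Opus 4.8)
The plan is to observe that both assertions have, in effect, already been proved in the course of establishing Lemma \ref{lemma1}; what remains is to isolate them cleanly. First I would treat the inequality \eqref{E:nonchar}. Recalling the explicit expressions \eqref{FGsigma} for $F,G,\sigma$ in terms of the seed curve $\gamma$ and the height function $h_0$, I would differentiate and simplify to reach the identity
\[
F'(s)^2 - 2\sigma'(s)G'(s)\ =\ 1 - 2W_0(s)\kappa(s) + (|\gamma'(s)|^2 + 1)(|\gamma'(s)|^2 - 1),
\]
valid for every $s\in J$. Since property (i) of the seed curve (coming from Theorem \ref{T:thma}) gives $|\gamma'(s)| \equiv 1$ on $J$, the last product vanishes identically, and since the non-characteristic condition \eqref{cl}, i.e. property (ii), asserts $1 - 2W_0(s)\kappa(s) < 0$ on $J$, the strict inequality \eqref{E:nonchar} follows at once.

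Next I would establish the invertibility of $\Psi$ on its image. Here the key structural observation is that $\Psi(u,s) = (u,v(u,s))$ has the identity as its first component, so that $\Psi$ is injective on $\R\times J$ precisely when, for each fixed $u\in\R$, the map $s\mapsto v(u,s)$ is injective on $J$. Differentiating the defining relation $v(u,s) = \sigma(s) + F(s)u + \frac{G(s)}{2}u^2$ with respect to $s$ yields
\[
\frac{\partial}{\partial s} v(u,s)\ =\ \sigma'(s) + F'(s)u + \frac{G'(s)}{2}u^2,
\]
which is a quadratic expression in $u$ whose discriminant is exactly $F'(s)^2 - 2\sigma'(s)G'(s)$. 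By the first part this discriminant is strictly negative for every $s\in J$, so $\partial_s v(u,s)$ never vanishes and keeps a constant sign in $u$; hence $v(u,\cdot)$ is strictly monotone on $J$ and therefore injective. This gives the injectivity of $\Psi$, hence its invertibility onto the image $\Psi(\R\times J)$, and allows us to define the inverse $\Psi^{-1}(u,v) = (u,s(u,v))$, whose second component is the function $s = s(u,v)$ referred to in the statement.

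The only genuinely computational point is the algebraic identity for $F'(s)^2 - 2\sigma'(s)G'(s)$: verifying it requires differentiating the fractional expressions in \eqref{FGsigma}, using $\gamma'\cdot\gamma'' = 0$ and $\gamma'' = \kappa(\gamma')^\perp$ (both consequences of $|\gamma'| \equiv 1$), and collecting terms so that the combination $1 - 2W_0\kappa$ emerges. I expect this bookkeeping to be the main, though entirely routine, obstacle; everything else is immediate from the structure $\Psi(u,s) = (u,v(u,s))$ and the sign of the discriminant. Since this computation was already carried out inside the proof of Lemma \ref{lemma1}, the proof of the proposition can simply cite it.
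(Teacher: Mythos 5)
Your proposal is correct and follows essentially the same route as the paper: the paper's Proposition \ref{P:Plemma1} is explicitly just a record of facts established inside the proof of Lemma \ref{lemma1}, namely the identity $F'(s)^2 - 2\sigma'(s)G'(s) = 1 - 2W_0(s)\kappa(s) + (|\gamma'(s)|^2+1)(|\gamma'(s)|^2-1) < 0$ (using $|\gamma'|\equiv 1$ and the non-characteristic condition \eqref{cl}) and the injectivity of $\Psi$ via the non-vanishing of the quadratic $\partial_s v(u,s) = \sigma'(s) + F'(s)u + \tfrac{G'(s)}{2}u^2$, whose discriminant is exactly the quantity above. Your reconstruction of both steps, including the monotonicity-in-$s$ argument for injectivity, matches the paper's reasoning.
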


These two lemmas show that every $C^2$ noncharacteristic complete
noncompact embedded $H$-minimal surface which is not itself a
vertical plane contains a subsurface which can be written as an
intrinsic graph.  To make the presentation as clean as possible, we
prove an intermediate lemma.

\begin{Lem}\label{lemma3}
Let $S$ be a  $C^2$ noncharacteristic complete noncompact embedded
$H$-minimal surface which is not itself a vertical plane and let
$J$ and the functions
$F,G,\sigma,\Psi$ be the ones from the proof of Lemma \ref{lemma1}
and $s$ as in Proposition \ref{P:Plemma1}.
If $\phi:\Psi(\R\times J)\to \R^2$ is given by
\[
\phi(u,v)\ =\ F(s(u,v))+u G(s(u,v))\quad \text{for } (u,v) \in \Omega = \Psi(\R\times J)\ .
\]
Then
\[
S_0\ =\ \{(0,u,v)\circ(\phi(u,v),0,0)\, |\, (u,v) \in \Omega\}
\]
is a sub surface of $S$.
\end{Lem}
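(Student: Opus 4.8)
The plan is to identify $S_0$ with the piece $N=\mathscr{L}(\R\times J)\subset S$ parametrized in \eqref{seedrep}: I will show that for each $(u,v)\in\Om$ the point of $S_0$ lying over $(u,v)$ equals $\mathscr{L}(r,s)$ for a suitable pair $(r,s)$, so that $S_0=N$. First I would put $S_0$ in Cartesian form. Applying the group law \eqref{Hn} to $(0,u,v)\circ(\phi(u,v),0,0)$ gives
\[
(0,u,v)\circ(\phi(u,v),0,0)\ =\ \left(\phi(u,v),\,u,\,v-\frac{u}{2}\,\phi(u,v)\right),
\]
so $S_0$ is precisely the intrinsic $X_1$-graph determined by $\phi$, and the task reduces to checking that each such triple lies on $N$.

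Next I would produce the parameters. Since $\zeta$ maps $\R\times J$ bijectively onto $\R\times J$ (as established in the proof of Lemma \ref{lemma1}, using $\gamma_1'\neq 0$ on $J$) and $\Psi$ is invertible on its image by Proposition \ref{P:Plemma1}, the map $P=\Pi\circ\mathscr{L}=\Psi\circ\zeta$ of \eqref{parabola} is a bijection of $\R\times J$ onto $\Om=\Psi(\R\times J)$. Thus for a given $(u,v)\in\Om$ I set $(r,s)=P^{-1}(u,v)$. By construction the second coordinate $s=s(u,v)$ is exactly the one appearing in the definition of $\phi$ (it is the second component of $\Psi^{-1}$), and from $\zeta(r,s)=(u,s)$ one reads off $u=\gamma_2(s)-r\gamma_1'(s)$.

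The heart of the matter is to verify, coordinate by coordinate, that $\mathscr{L}(r,s)\in N$ equals $\bigl(\phi(u,v),u,v-\frac{u}{2}\phi(u,v)\bigr)$. The $y$-coordinate of $\mathscr{L}(r,s)$ is $\gamma_2(s)-r\gamma_1'(s)=u$ at once. For the $x$-coordinate I would substitute the expressions \eqref{FGsigma} for $F,G$ into $\phi(u,v)=F(s)+uG(s)$ and use $u=\gamma_2(s)-r\gamma_1'(s)$; the terms in $\gamma_2\gamma_2'/\gamma_1'$ cancel and one is left with $\gamma_1(s)+r\gamma_2'(s)$, which is exactly the $x$-component of $\mathscr{L}(r,s)$. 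This is the single genuinely computational step, though it is short. For the $t$-coordinate I would avoid grinding through $h_0$ and instead invoke the definition of the intrinsic projection (Definition \ref{D:ipm}): since $\Pi(\mathscr{L}(r,s))=P(r,s)=(0,u,v)$ while $\Pi(x,y,t)=(0,y,t+\frac{yx}{2})$, the last coordinate $t$ of $\mathscr{L}(r,s)$ satisfies $v=t+\frac{u}{2}\phi(u,v)$, i.e.\ $t=v-\frac{u}{2}\phi(u,v)$, as needed.

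Combining these three identifications shows $\mathscr{L}(r,s)$ is the point of $S_0$ over $(u,v)$. As $(u,v)$ ranges over $\Om=P(\R\times J)$, the pair $(r,s)=P^{-1}(u,v)$ ranges over all of $\R\times J$, whence $S_0=\mathscr{L}(\R\times J)=N$, which is contained in $S$ by Corollary \ref{foliation}; thus $S_0$ is a subsurface of $S$. I expect no serious obstacle here: the only real content is the algebraic cancellation in the $x$-coordinate, while the $t$-coordinate falls out for free from the definition of $\Pi$, reflecting the fact that $\phi$ was built precisely so that flowing along $X_1$ by $\phi(u,v)$ from $(0,u,v)$ recovers the original surface point.
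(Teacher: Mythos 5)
Your proposal is correct, and its overall structure coincides with the paper's: both identify $S_0$ with the patch $N=\mathscr{L}(\R\times J)$ via the correspondence $(u,v)=\Psi\circ\zeta(r,s)$, and both verify the $x$-coordinate by the same cancellation, substituting \eqref{FGsigma} and $u=\gamma_2(s)-r\gamma_1'(s)$ into $F(s)+uG(s)$ to get $\gamma_1(s)+r\gamma_2'(s)$. The one genuine difference is the $t$-coordinate. The paper grinds it out: it expands $v-\frac{1}{2}u\phi(u,v)$ using $v=\sigma(s)+F(s)u+\frac{G(s)}{2}u^2$ and the explicit formulas for $F,G,\sigma$, and simplifies down to $h_0(s)-\frac{r}{2}\gamma(s)\cdot\gamma'(s)$. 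You instead observe that the identity $\Pi\circ\mathscr{L}=\Psi\circ\zeta$ (already established in the proof of Lemma \ref{lemma1}) together with the formula $\Pi(x,y,t)=(0,y,t+\frac{xy}{2})$ from Definition \ref{D:ipm} forces $t=v-\frac{u}{2}\phi(u,v)$ once the $x$- and $y$-coordinates are matched, so no further computation is needed. This is a legitimate and cleaner route---it makes explicit that $\phi$ was engineered exactly so that the $X_1$-flow from $(0,u,v)$ lands back on the surface---though it is fair to note that the algebraic burden has not vanished but has been shifted onto the verification of $\Pi\circ\mathscr{L}=\Psi\circ\zeta$, which the paper's Lemma \ref{lemma1} asserts as a ``straightforward computation.'' Within the paper's own logic that identity is available to you, so your use of it is sound, and your bijectivity bookkeeping (that $\zeta$ maps $\R\times J$ onto $\R\times J$ and $\Psi$ is injective, so $(r,s)=P^{-1}(u,v)$ exists and its second component is the $s(u,v)$ of Proposition \ref{P:Plemma1}) is exactly what is needed to conclude $S_0=\mathscr{L}(\R\times J)\subset S$.
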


\begin{proof}[\textbf{Proof}]
With the functions $\Psi, \phi, s, F, G, \sigma$ and $\Omega$ as in the
statement of the Lemma, we define $\Phi:\Omega \to \mathbb{H}^1$ as follows
\[
\Phi(u,v) \ =\ \left(\phi(u,v), u, v - \frac{1}{2}\,u\,\phi(u,v)\right)\ .
\]
Our intention is to show that $\Phi(\Omega) = \mathscr{L}(\R\times J)$. We begin by comparing the second components of $\Phi$ and $\mathscr{L}$.  Note that if

\begin{equation}\label{E:u_id}
u\ = \ \gamma_2(s) - r\,\gamma_1'(s)\ ,
\end{equation}
then
\begin{align}\label{phi_id}
\phi(u,v) &\ =\ F(s(u,v)) + u\,G(s(u,v) \\
\notag
&\ =\
F(s) + (\gamma_2(s) - r\,\gamma_1'(s))\,G(s) \\
\notag
\text{(by \eqref{FGsigma})}
&\ =\
\gamma_1(s) + \frac{\gamma_2(s)\gamma_2'(s)}{\gamma_1'(s)} - \Bigl(\gamma_2(s) - r\,\gamma_1'(s)\Bigr)\frac{\gamma_2'(s)}{\gamma_1'(s)} \\
\notag
&\ =\
\frac{\gamma_1(s)\gamma_1'(s) + \gamma_2(s)\gamma_2'(s) - \gamma_2(s)\gamma_2'(s) + r\,\gamma_1'(s)\gamma_2'(s)}{\gamma_1'(s)} \\
\notag
&\ =\ \gamma_1(s) + r\,\gamma_2'(s)\ ,
\notag
\end{align}
which is the first component of $\mathscr{L}$.  We now turn to the third component of $\Phi$.  Keeping in mind that for $(u,v)\in\Omega = \Psi(\R\times J)$ we
have
\[
v\ =\ \sigma(s) + F(s) u + \frac{G(s)}{2}\,u^2
\]
hence

\begin{align*}
v - \frac{1}{2}\,u\,\phi(u,v) & =
\sigma(s) + F(s) u + \frac{G(s)}{2}\,u^2 - \frac{1}{2}\,u\,\phi(u,v) \\
\text{(by \eqref{E:u_id}, \eqref{FGsigma} and \eqref{phi_id})} & =
h_0(s) - \frac{1}{2} \gamma_2(s)\left(\gamma_1(s) +
\frac{\gamma_2(s)\gamma_2'(s)}{\gamma_1'(s)}\right) \\
& + \left(\gamma_1(s) +
\frac{\gamma_2(s)\gamma_2'(s)}{\gamma_1'(s)}\right)(\gamma_2(s) -
r\,\gamma_1'(s)) \\ &  -
\frac{1}{2}\frac{\gamma_2'(s)}{\gamma_1'(s)}(\gamma_2(s) -
r\,\gamma_1'(s))^2 - \frac{1}{2}(\gamma_2(s) -
r\,\gamma_1'(s))(\gamma_1(s) + r\,\gamma_2'(s)) \\ & = h_0(s) -
\frac{r}{2} \gamma(s)\cdot\gamma'(s) \notag
\end{align*}
which is the third component of $\mathscr{L}$.
\end{proof}

Finally, we turn to the

\begin{proof}[\textbf{Proof of Theorem \ref{T:existstrip0}}]
Since $S$ is not itself a vertical plane, Lemma \ref{L:lemma2} guarantee the
existence of a point $g_o\in S$ and a neighborhood $N$ of $g_o$ such that $N$ can be written as a graph over the plane $t = 0$.  Theorem \ref{T:thma} then provides the necessary parameterization of such a neighborhood by the map $\mathscr{L}$ whose domain is $\R\times J$.  Lemmas \ref{lemma1}, \ref{lemma3} and Proposition \ref{P:Plemma1} then show that the portion $\mathscr{L}(\R\times J)\subset S$ can
be reparameterized to conform to Definition \ref{intdeltastrip} hence, establishing the required $\delta$-graphical strip.
\end{proof}

Combining this with Theorem \ref{T:existstrip0}, we can now easily
prove the main Theorem.

\vspace{.2in}

\noindent

\begin{proof}[\textbf{Proof of Theorem \ref{I:main}}]
Suppose $S$ is a $C^2$ complete embedded noncharacteristic
$H$-minimal surface without boundary which is not a vertical plane.
Then, Theorem \ref{T:existstrip0} shows that $S$ contains an
intrinsic graphical strip, $S_0$, and thus, by Theorem
\ref{I:unstable}, $S_0$, and hence $S$, is not stable.

\end{proof}

\end{document}